\newcommand{\showcomments}{yes}
\renewcommand{\showcomments}{no}
\newsavebox{\commentbox}
\newenvironment{com}%
{\ifthenelse{\equal{\showcomments}{yes}}%
{\footnotemark
    \begin{lrbox}{\commentbox}
    \begin{minipage}[t]{1.25in}\raggedright\sffamily\tiny
    \footnotemark[\arabic{footnote}]}
{\begin{lrbox}{\commentbox}}}%
{\ifthenelse{\equal{\showcomments}{yes}}%
{\end{minipage}\end{lrbox}\marginpar{\usebox{\commentbox}}}
{\end{lrbox}}}
\newtheorem{thm}{Theorem}[section]
\newtheorem{lem}[thm]{Lemma}
\newtheorem{cor}[thm]{Corollary}
\newtheorem{conj}[thm]{Conjecture}
\theoremstyle{definition}
\newtheorem{defn}[thm]{Definition}
\newtheorem{rem}[thm]{Remark}
\newtheorem{exmp}[thm]{Example}
\newtheorem{tecrem}[thm]{Technical Remark}
\newcommand{\cev}[1]{\reflectbox{\ensuremath{\vec{\reflectbox{\ensuremath{#1}}}}}}
\newcommand{\field}[1]{\mathbb{#1}}%
\newcommand{\integers}{\ensuremath{\field{Z}}}
\newtheorem*{MainRH}{Theorem A}
\newtheorem*{CorRHVertexParabolics}{Corollary B}
\newtheorem*{MainQC}{Main Theorem C}
\newtheorem*{LQCap}{Theorem D}
\newtheorem*{CorManifold}{Corollary E}
\begin{document}

\title[Quasiconvexity and Relatively Hyperbolic Groups that Split]
{Quasiconvexity and Relatively Hyperbolic Groups that Split}

\author[H.~Bigdely]{Hadi Bigdely}
      \address{Dept. of Math \& Stats.\\
               McGill University \\
               Montreal, Quebec, Canada H3A 2K6 }
\email{bigdely@math.mcgill.ca}
\author[D. T.~Wise]{Daniel T. Wise}
      \email{wise@math.mcgill.ca}
\thanks{Research supported by NSERC}

\subjclass[2000]{ 
20F06, 
}

\keywords{relatively hyperbolic, relatively quasiconvex, graphs of groups}
\date{\today}

\begin{com}
{\bf \normalsize COMMENTS\\}
ARE\\
SHOWING!\\
\end{com}

\begin{abstract}
We explore the combination theorem for a group $G$ splitting as a graph of relatively hyperbolic groups. Using the fine graph approach to relative hyperbolicity, we find short proofs of the relative hyperbolicity of $G$ under certain conditions. We then provide a criterion for the relative quasiconvexity of a subgroup $H$ depending on the relative quasiconvexity of the intersection of $H$ with the vertex groups of $G$. We give an application towards local relative quasiconvexity.
\end{abstract}

\maketitle

\tiny
\normalsize
The goal of this paper is to examine  relative hyperbolicity  and quasiconvexity
in graphs of  relatively hyperbolic vertex groups with almost malnormal quasiconvex edge groups.
The paper hinges upon the observation that if $G$ splits as a graph of relatively hyperbolic groups
with malnormal relatively quasiconvex edge groups, then a fine hyperbolic graph for  $G$ can be built
from fine hyperbolic graphs for the vertex groups. This leads to short proofs of the relative hyperbolicity of $G$
as well as to a concise criterion for the relative quasiconvexity of a subgroup $H$ of $G$.

Bestvina and Feighn proved a combination theorem that characterized the hyperbolicity of groups splitting as  graphs of  hyperbolic groups \cite{BF92}. Their geometric characterization is akin to the flat plane theorem characterization of hyperbolicity for actions on CAT(0) spaces, and leads to explicit positive results, especially in an ``acylindrical'' scenario where some form of malnormality is imposed on the edge groups. The Bestvina-Feighn combination theorem has been revisited multiple times in a hyperbolic setting,
and more recently in a relatively hyperbolic context but through diverse methods.

Dahmani  proved a combination theorem for relatively hyperbolic groups using the convergence group approach
\cite{Dahmani03}. Later Alibegovi\'{c}  proved similar results in \cite{Alibegovic05}
using a method generalizing parts of the Bestvina-Feighn approach.  Osin reproved Dahman's result in the general context of  relative Dehn functions  \cite{Osin2006}.
Most recently, Mj and Reeves gave a generalization
of the Bestvina-Feighn combination theorem that follows Farb's approach but uses a generalized ``partial electrocution'' \cite{MjReeves2008}. Their result appears to be a far-reaching generalization at the expense of complex geometric language.

Our own results revisit these relatively hyperbolic generalizations, and we offer a very concrete approach
employing Bowditch's fine hyperbolic graphs.
The most natural formulation of our main combination theorem (proven as Theorem~\ref{first}) is as follows:
\begin{MainRH}[Combining Relatively Hyperbolic Groups Along Parabolics]\label{thm:MainRH}
Let $G$ split as a finite graph of groups. Suppose each vertex group is relatively hyperbolic and each edge group is parabolic in its vertex groups. Then $G$ is hyperbolic relative to $\mathbb Q=\{Q_1,\dots, Q_j\}$ where each $Q_i$ is the stabilizer of a ``parabolic tree''. (See Definition~\ref{parabolic tree}.)
\end{MainRH}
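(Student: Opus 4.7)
The plan is to invoke Bowditch's characterization of relative hyperbolicity: $G$ is hyperbolic relative to $\mathbb{Q}=\{Q_1,\dots,Q_j\}$ if and only if $G$ admits a cocompact action on a connected fine $\delta$-hyperbolic graph $K$ with finite edge stabilizers, finitely many $G$-orbits of edges, and every infinite vertex stabilizer conjugate into some $Q_i$. The proof therefore reduces to constructing such a $K$ out of the fine hyperbolic graphs $K_v$ of the vertex groups $G_v$ together with the Bass-Serre tree $T$ of the splitting.

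For the construction, I would take the disjoint union of equivariantly chosen fine hyperbolic graphs $K_v$, one for each vertex of $T$. For every edge $e=[u,v]$ of $T$, the edge group $G_e$ is parabolic in both $G_u$ and $G_v$, so it fixes a unique parabolic vertex $p^e_u\in K_u$ and $p^e_v\in K_v$; $K$ is obtained by identifying $p^e_u$ with $p^e_v$ for every edge $e$ of $T$. These identifications are $G$-equivariant, so $G$ acts naturally on $K$. The equivalence classes of identified parabolic vertices are indexed by the \emph{parabolic trees} of Definition~\ref{parabolic tree}, and the stabilizer of such a class is by construction generated by the parabolic subgroups of the relevant $G_v$, yielding precisely the peripheral subgroups $Q_i$.

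Next I would verify Bowditch's hypotheses other than hyperbolicity. Finite edge stabilizers and cocompactness on edges transfer directly from the $K_v$: edges of $K$ are edges of some $K_v$, there are only finitely many $G_v$-orbits of edges in each $K_v$, and only finitely many $G$-orbits of vertices in $T$. For fineness, the identified vertices are cut points between the different pieces, so any circuit in $K$ decomposes into arcs each confined to a single $K_v$, and fineness of the $K_v$ passes to $K$. The classification of vertex stabilizers is then transparent: an unidentified vertex keeps its original $K_v$-stabilizer, which is either finite or parabolic in $G_v$ and hence contained in some $Q_i$, while an identified vertex corresponds to a parabolic tree and therefore has stabilizer one of the $Q_i$.

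The main obstacle will be proving that $K$ is Gromov hyperbolic. My plan is to exploit the tree-like structure of the gluing: because the identified vertices are cut points, any path between a vertex of $K_u$ and a vertex of $K_w$ must cross the parabolic vertices dictated by the $T$-geodesic from $u$ to $w$. A geodesic in $K$ therefore decomposes as a concatenation of geodesics in successive $K_v$ joining consecutive gluing points, and a geodesic triangle in $K$ becomes a tree-like union of geodesic triangles in the $K_v$. Uniform thinness will then follow from uniform hyperbolicity of the $K_v$ and the tree structure of $T$, in the spirit of standard combination arguments for trees of hyperbolic spaces, but much simpler here because the gluings occur along single vertices rather than along thick quasiconvex subspaces.
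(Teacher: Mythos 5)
Your construction is essentially the paper's: the paper forms a tree of spaces by attaching one new edge per Bass--Serre edge at the chosen parabolic vertices of the copies of $K_u$ and then collapses those edges, which is exactly your direct identification of $p^e_u$ with $p^e_v$, and it verifies fineness, hyperbolicity, cocompactness, finite edge stabilizers, and that vertex stabilizers are parabolic-tree stabilizers in the same way (circuits lie in a single vertex space, so fineness and hyperbolicity pass to the glued graph). Two small points to tidy: when $G_e$ is finite it need not fix a \emph{unique} vertex of $K_u$, so the gluing vertices must be declared by an equivariant choice as in Technical Remark~\ref{tr}; and the stabilizer of an identified vertex class is the setwise stabilizer of the corresponding parabolic tree, which in HNN-type situations contains stable letters and is not generated by vertex-group parabolics alone --- but that stronger claim of yours is not needed, since $Q_i$ is defined as the tree stabilizer.
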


A simplistic example illustrating Theorem~A is an amalgamated product $G=G_1*_C G_2$ where each $G_i = \pi_1M_i$ and $M_i$ is a cusped hyperbolic manifold with a single boundary torus $T_i$. And $C$ is an arbitrary common subgroup of $\pi_1T_1$ and $\pi_1T_2$. Then $G$ is hyperbolic relative to $\pi_1 T_1*_C \pi_1 T_2$.

We note that Theorem~A is more general than results in the same spirit that were obtained by Dahmani, Alibegovi\'{c}, and Osin.
In particular, they require that edge groups be maximal parabolic on at least one side, but we do not.
We believe that Theorem~A could be deduced from the results of Mj-Reeves.

In Section~\ref{sec:improvements using yang}, we employ work of Yang \cite{Yang11} on extended peripheral structures,
to obtain the following seemingly more natural corollary of Theorem~A which is proven as Corollary~\ref{hyp Qc combination}:

\begin{CorRHVertexParabolics}\label{cor:CorRHVertexParabolics}
Let $G$ split as a finite graph of groups. Suppose
\begin{itemize}
\item[(a)] Each $G_{\nu}$ is hyperbolic relative to $\mathbb P_{\nu}$;
\item[(b)] Each $G_e$ is total and relatively quasiconvex in $G_{\nu}$;
\item[(c)] $\{G_e: e~ \textrm{is attached to } \nu\}$ is almost malnormal in $G_{\nu}$ for each vertex $\nu$.
\end{itemize}
Then $G$ is hyperbolic relative to $\bigcup_{\nu} \mathbb P_{\nu}-\{\text{repeats}\}$.
\end{CorRHVertexParabolics}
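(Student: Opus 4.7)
The plan is to reduce Corollary~B to Theorem~A by combining it with Yang's results on modifying peripheral structures \cite{Yang11}. By hypothesis (b), each edge group $G_e$ is \emph{total}, which in particular implies that $G_e$ is contained in a conjugate of some $P\in \mathbb{P}_\nu$; hence $G_e$ is parabolic in each of its vertex groups. This is exactly the hypothesis required by Theorem~A, so applying Theorem~A we conclude that $G$ is hyperbolic relative to a family $\mathbb{Q}=\{Q_1,\dots,Q_j\}$ of parabolic-tree stabilizers (see Definition~\ref{parabolic tree}).

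Next I would analyze the structure of each $Q_i$. By the definition of a parabolic tree, $Q_i$ acts on a subtree $T_i$ of the Bass-Serre tree whose vertex stabilizers are maximal parabolic subgroups of the various $G_\nu$ (i.e., conjugates of elements of the $\mathbb{P}_\nu$) and whose edge stabilizers are conjugates of edge groups $G_e$. Thus $Q_i$ itself inherits a graph-of-groups splitting in which the vertex groups are peripheral subgroups drawn from $\bigcup_\nu \mathbb{P}_\nu$ and the edge groups sit inside those peripherals. This splitting realizes $Q_i$ as hyperbolic relative to a sub-collection $\mathbb{P}_{Q_i}\subseteq \bigcup_\nu \mathbb{P}_\nu$ (up to conjugacy). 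The almost malnormality hypothesis (c) at each vertex ensures that distinct parabolic trees cannot share a maximal parabolic, so each $P\in \bigcup_\nu \mathbb{P}_\nu$ appears in at most one $Q_i$ up to conjugacy; this is the origin of the ``$-\{\text{repeats}\}$'' in the conclusion.

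The final step invokes Yang's refinement of peripheral structures (a version of transitivity of relative hyperbolicity): if $G$ is hyperbolic relative to $\mathbb{Q}$ and each $Q_i\in\mathbb{Q}$ is hyperbolic relative to $\mathbb{P}_{Q_i}$, then $G$ is hyperbolic relative to $\bigcup_i \mathbb{P}_{Q_i}$. Combining this with the identification above yields $G$ hyperbolic relative to $\bigcup_\nu \mathbb{P}_\nu -\{\text{repeats}\}$, as desired.

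The main obstacle I expect is the structural analysis of $Q_i$: one must verify that the ``parabolic tree'' bookkeeping really does present $Q_i$ as a graph of peripheral subgroups amalgamated along edge groups, and that the almost malnormality condition (c) translates into the claim that peripherals are not double-counted across different parabolic trees. Once this structural claim is in hand, the invocation of Yang's refinement is essentially formal.
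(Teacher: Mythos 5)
Your first step contains a genuine gap: ``total'' does not mean ``parabolic''. In this paper a subgroup $H$ is \emph{total} relative to $\mathbb P_\nu$ if for every conjugate $P^g$ of a peripheral subgroup the intersection $H\cap P^g$ is either all of $P^g$ or finite; it does not follow that $H$ lies in a conjugate of some $P\in\mathbb P_\nu$. (For instance a cyclic subgroup of a vertex group meeting every peripheral conjugate finitely is total but not parabolic.) The whole point of Corollary~B is to handle edge groups that are \emph{not} parabolic, so your reduction ``(b) $\Rightarrow$ each $G_e$ is parabolic in its vertex groups, hence Theorem~A applies with the original structures $\mathbb P_\nu$'' collapses at the outset: without parabolic edge groups there are no parabolic trees for the given peripheral structures, and the subsequent structural analysis of the $Q_i$ has nothing to stand on. A secondary point: the transitivity statement you attribute to Yang (if $G$ is hyperbolic relative to $\mathbb Q$ and each $Q_i$ is hyperbolic relative to $\mathbb P_{Q_i}$, then $G$ is hyperbolic relative to $\bigcup_i\mathbb P_{Q_i}$) is Lemma~\ref{associative hyp}, due to Drutu--Sapir; Yang's Theorem~\ref{hyp extend} plays a different role.

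The paper's argument runs in the opposite direction. One first \emph{extends} the peripheral structure of each vertex group, replacing $\mathbb P_\nu$ by $\mathbb E_\nu$ obtained by adjoining the edge groups attached to $\nu$ and deleting those peripherals that are conjugate into them; hypotheses (b) and (c), together with totality, verify the two conditions of Yang's Theorem~\ref{hyp extend} (almost malnormality and relative quasiconvexity of $\mathbb E_\nu$), so each $G_\nu$ is hyperbolic relative to $\mathbb E_\nu$. In these extended structures the edge groups \emph{are} (maximal) parabolic, so Theorem~\ref{first} (via Corollary~\ref{mlemma}, or the inductive argument of Theorem~\ref{QC combination}) applies and gives $G$ hyperbolic relative to the union of the $\mathbb E_\nu$ minus repeats. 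Finally one descends back to the original peripherals with Lemma~\ref{associative hyp}, since each extended peripheral is itself hyperbolic relative to the elements of $\bigcup_\nu\mathbb P_\nu$ it contains; this is where ``$-\{\text{repeats}\}$'' is accounted for. If you want to salvage your outline, this extension--combine--descend scheme is the missing idea; the totality hypothesis is what makes the extension step legitimate, not a substitute for parabolicity.
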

\noindent The ``omitted repeats'' in the conclusion  of Corollary~B
refer to (some of) the parabolic subgroups of vertex groups that are identified through an edge group.

It is not clear whether Corollary~B could be obtained using the method of Dahmani, Alibegovi\'{c}, or Osin.
However, we suspect it could be extracted from the result of Mj-Reeves.

\begin{defn}(Tamely generated)
Let $G$ split as a graph of groups with relatively hyperbolic vertex groups.
A subgroup $H$ is \emph{tamely generated} if
the induced graph of groups $\Gamma_H$ \begin{com} unfortunately we used $\Gamma_H$ notation at the end...\end{com}
has a $\pi_1$-isomorphic subgraph of groups  $\Gamma'_H$
that is a finite graph of groups each of whose vertex groups is
 relatively quasiconvex in the corresponding vertex group of $G$.

Note that $H$ is tamely generated when $H$ is finitely generated
and there are finitely many $H$-orbits of vertices $v$ in $T$ with $H_v$ nontrivial,
and each such $H_v$ is relatively quasiconvex in $G_v$.
However the above condition is not necessary.
For instance, let $G=F_2\times \integers_2$, and consider a splitting where $\Gamma$ is a bouquet of two circles,
and each vertex and edge group is isomorphic to $\integers_2$. Then  every f.g. subgroup $H$ of $F_2 \times \integers_2$ is tamely generated,
but no subgroup containing $\integers_2$ satisfies the condition that there are finitely many $H$-orbits of vertices $\omega$ with $H_\omega$ nontrivial.
\end{defn}
The geometric construction proving Theorem~A allows us to give a simple criterion for quasiconvexity of a subgroup $H$
relative to $\mathbb Q$. Again, coupling this with Yang's work, we obtain (as Theorem~\ref{thm:strongest result})
 the following criterion for quasiconvexity relative to $\mathbb P$:

\begin{MainQC}[Quasiconvexity Criterion]
Let $G$ be hyperbolic relative to $\mathbb P$ where each $P\in \mathbb P$ is finitely generated. Suppose $G$ splits as a finite graph of groups.
 Suppose
\begin{enumerate}[(a)]
\item Each $G_e$ is total in $G$;
\item Each $G_e$ is relatively quasiconvex in $G$;
\item Each $G_e$ is almost malnormal in $G$.
\end{enumerate}
Let $H \leq G$ be tamely generated. Then $H$ is relatively quasiconvex in $G$.
\end{MainQC}

Recall that $G$ is \emph{locally relatively quasiconvex} if each finitely generated subgroup $H$ of $G$ is quasiconvex relative to
the peripheral structure of $G$.
I.~Kapovich first recognized that hyperbolic limit groups are locally relatively quasiconvex \cite{Kapovich2002},
 and subsequently  Dahmani proved that all limit groups are locally relatively quasiconvex \cite{Dahmani03}.

 A group $P$ is \emph{small} if there is no embedding $F_2\hookrightarrow P$,
 and $G$ has a \emph{small hierarchy} if it can be built from small subgroups by a sequence of AFP's and HNN's
 along small subgroups (see Definition~\ref{defn:small}).
When $\mathbb P$ is a collection of free-abelian groups,
 the following inductive consequence of Corollary~\ref{cor:lqc noetherian combination}
 generalizes Dahmani's result.
\begin{LQCap}
Let $G$ be hyperbolic relative to a collection of Noetherian subgroups $\mathbb P$ and suppose $G$ has a
small hierarchy. Then $G$ is locally relatively quasiconvex.
\end{LQCap}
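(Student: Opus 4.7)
The plan is to induct on the length of the small hierarchy of $G$. The base case is when $G$ is itself small: being small and relatively hyperbolic, $G$ is elementary, so $G$ is finite, virtually cyclic, or contained in some peripheral subgroup $P \in \mathbb{P}$. Finite and virtually cyclic groups are trivially locally relatively quasiconvex. In the remaining case, the Noetherian hypothesis forces every subgroup of $G \leq P$ to be finitely generated, and finitely generated parabolic subgroups are relatively quasiconvex.

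For the inductive step, write $G = A *_C B$ or $G = A *_C$, where $C$ is small and $A$ (and $B$) admit small hierarchies of strictly shorter length; by the inductive hypothesis, $A$ and $B$ are locally relatively quasiconvex. Because $C$ is small and $G$ is relatively hyperbolic, $C$ is elementary, i.e.\ either parabolic or virtually cyclic, so $C$ is relatively quasiconvex in $G$. After possibly enlarging $C$ to the maximal elementary subgroup containing it and refining the graph of groups accordingly---a move that preserves small-hierarchy length because the maximal elementary extension of a small subgroup remains small under the Noetherian hypothesis on $\mathbb{P}$---one may assume the edge group is also total and almost malnormal in $G$, so that hypotheses (a)--(c) of Main Theorem~C are satisfied.

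Given a finitely generated $H \leq G$, let $\Gamma_H$ be the induced graph of groups coming from the $H$-action on the Bass-Serre tree $T$ of the splitting. Each vertex group of $\Gamma_H$ is a subgroup of a conjugate of $A$ or $B$, and by the inductive hypothesis any finitely generated such vertex group is relatively quasiconvex in the ambient vertex group. The main obstacle is therefore verifying that $H$ is tamely generated, i.e.\ producing a finite $\pi_1$-isomorphic subgraph of groups $\Gamma_H' \subseteq \Gamma_H$ whose vertex groups are finitely generated. Here the almost malnormality of $C$ is decisive: it renders the $G$-action on $T$ acylindrical, hence also the $H$-action, so an accessibility-type argument bounds the complexity of $\Gamma_H'$ in terms of the rank of $H$, while the Noetherian property of the peripherals ensures that any vertex group of $\Gamma_H'$ which meets a parabolic of the ambient vertex group is itself finitely generated. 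Once such a $\Gamma_H'$ is in hand, Main Theorem~C (equivalently, the inductive application of Corollary~\ref{cor:lqc noetherian combination}) delivers that $H$ is relatively quasiconvex in $G$, closing the induction.
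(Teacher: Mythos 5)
Your overall shape (induction on the hierarchy, using the Tits alternative to see that the small edge group $C$ is elementary or parabolic, then upgrading $C$ to a maximal elementary/parabolic subgroup) matches the paper's strategy, but two steps do not go through as written. First, the reduction to Main Theorem~C (Theorem~\ref{thm:strongest result}) fails in the case where $C$ is infinite virtually cyclic but not parabolic and not maximal: if $C$ has finite index in the maximal elementary subgroup $Z$, then $C^z\cap C$ is infinite for $z\in Z\setminus C$, so $C$ is \emph{never} almost malnormal in $G$, and no refinement of the graph of groups repairs this while keeping the original peripheral structure $\mathbb P$ (even writing $G=A*_Z(Z*_C B)$ after conjugating $Z$ into a vertex group leaves an edge labelled $C$). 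The paper's proof of Theorem~\ref{thm:Noetherian} handles exactly this case differently: it \emph{extends the peripheral structure} to $\mathbb P\cup\{Z\}$ using Yang's Theorem~\ref{hyp extend}, observes that $C$ is then maximal parabolic on at least one side, applies the parabolic-edge criterion (Theorem~\ref{intersect}, which needs no malnormality of $C$), and then transfers quasiconvexity back to $\mathbb P$ via Theorem~\ref{Yang}(2) by checking that $H\cap Z^g$ is relatively quasiconvex. Your proposal contains no analogue of this transfer step, and it is not optional. Similarly, in the infinite parabolic case the ``enlarge and refine'' move produces the splitting $A*_{D_a}(D_a*_C D_b)*_{D_b}B$, whose middle vertex group is \emph{not} covered by the inductive hypothesis; it is handled because it is itself parabolic in $G$, not because the move ``preserves small-hierarchy length.''

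Second, your verification of tame generation is the wrong argument. Acylindricity and accessibility are neither needed nor sufficient: finiteness of the induced graph of groups $\Gamma_H$ is automatic because a finitely generated $H$ has a cocompact minimal subtree. What actually has to be proved is that each vertex group $H\cap G_\nu^{g}$ of $\Gamma_H$ is \emph{finitely generated}, since only then does the inductive hypothesis (local relative quasiconvexity of the vertex groups of $G$) yield the relative quasiconvexity required by the definition of tame generation. In the paper this comes from the chain: the edge groups of $\Gamma_H$ are subgroups of conjugates of $C$, which is finite, virtually cyclic, or contained in a Noetherian peripheral, hence finitely generated; then Lemma~\ref{fg} (the normal-form/disc-diagram argument) gives finite generation of the vertex groups of $\Gamma_H$; finally Theorem~\ref{qc first} (equivalently Theorem~\ref{intersect}) applies. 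Your sentence about vertex groups ``which meet a parabolic of the ambient vertex group'' being finitely generated does not establish this and is not correct as stated, since a vertex group of $\Gamma_H$ need not meet any parabolic at all. Repairing your argument essentially forces you back onto the paper's route through Theorem~\ref{intersect}, Lemma~\ref{fg}, and Yang's Theorems~\ref{hyp extend} and~\ref{Yang}.
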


Although Theorem~D is implicit in Dahmani's work, we  believe Theorem~C is new.

\begin{figure}\centering
\includegraphics[width=.7\textwidth]{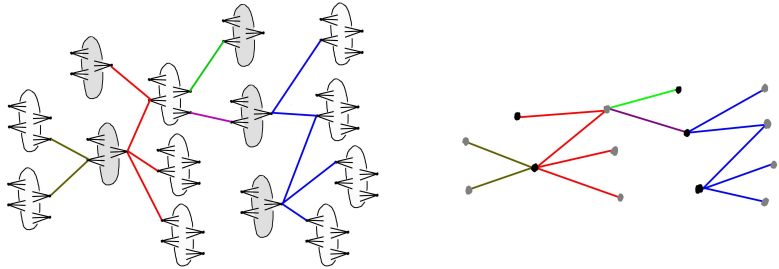}
\caption{A fine graph $K_G$ for $G=A*_C B$ is built from copies of fine graphs $K_A$ and $K_B$ for $A$ and $B$ by gluing new edges together along vertices stabilized by $C$. The parabolic trees of $T$ are images of  trees formed from the new edges in $K_G$. We obtain a fine hyperbolic graph $\bar K_G$ with finite edge stabilizers as a quotient $K_G\rightarrow \bar K_G$.
 \label{fig:NewFineGraph}}
\end{figure}

{\bf The main construction and its application:}
Although we  work in somewhat greater generality,
let us focus on the simple case of an amalgamated product $G=A*_C B$ where $A,B$ are relatively hyperbolic and $C$ is parabolic
on each side.
The central theme of this paper is a construction that builds a fine hyperbolic graph $\bar K_G$ for $G$
from fine hyperbolic graphs $K_A$ and $K_B$ for $A,B$. (See Figure~\ref{fig:NewFineGraph}.)
This is done in two steps: Guided by the Bass-Serre tree, we first construct a graph $K_G$ which is a tree of spaces whose vertex spaces are copies of $K_A$ and $K_B$, and whose edge spaces are ordinary edges. Though $K_G$ is fine and hyperbolic, its edges have infinite stabilizers. We remedy this by quotienting these edge spaces
to form the fine hyperbolic graph $\bar K_G$. The vertices of $\bar K_G$ are quotients of ``parabolic trees'' in $K_G$.
The fine hyperbolic graph $\bar K_G$ quickly proves that $G$ is hyperbolic relative to the collection $\mathbb Q$ of subgroups stabilizing  parabolic trees. Variations on the construction, hypotheses on the edge groups, and interplay with previous work on peripheral structures, leads to a variety of relatively hyperbolic conclusions. The simplest and most immediate in the case above,
is that $G$ is hyperbolic relative to $\mathbb P_G = \mathbb P_A \cup \mathbb P_B -\{C\}$
 when $C$ is maximal parabolic on each side and $A,B$ are hyperbolic relative to $\mathbb P_A$ and $\mathbb P_B$.

Our primary application is to give an easy criterion to recognize quasiconvexity.
A subgroup $H$ is relatively quasiconvex in $G$ if there is an $H$-cocompact quasiconvex subgraph $\bar L\subset \bar K_G$ of the fine hyperbolic $G$-graph.
The tree-like nature of our graph $\bar K_G$, permits us to naturally build the quasiconvex $H$-graph $\bar L$.
When $H$ is relatively quasiconvex, there are finitely many $H$-orbits of nontrivially $H$-stabilized vertices in the Bass-Serre tree $T$, and each of these stabilizers is relatively quasiconvex in its vertex group.
Choosing finitely many quasiconvex subgraphs in the corresponding copies of $K_A$ and $K_B$,
we are able to combine these together to form $L$ in $K_G$ and then to form a quasiconvex $H$-subgraph
$\bar L$ in $\bar K_G$.

We conclude by mentioning the following consequence of Corollary~\ref{cor:first}
that is a natural consequence of the viewpoint developed in this paper.
\begin{CorManifold}
Let $M$ be a compact irreducible 3-manifold.
And let $M_1,\ldots, M_r$ denote the graph manifolds obtained by removing each (open) hyperbolic piece
in the geometric decomposition of $M$.
Then  $\pi_1M$ is hyperbolic relative to $\{\pi_1M_1,\ldots, \pi_1M_r\}$.
\end{CorManifold}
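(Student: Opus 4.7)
The plan is to apply Theorem~A to the graph of groups for $\pi_1 M$ coming directly from the geometric decomposition of $M$. By geometrization, $M$ splits along a maximal disjoint collection of incompressible tori into finitely many Seifert pieces $S_1,\ldots,S_t$ and cusped hyperbolic pieces $N_1,\ldots,N_s$; the graph manifolds $M_1,\ldots,M_r$ are precisely the maximal connected unions of adjacent Seifert pieces. This yields a graph of groups $\mathcal G$ for $\pi_1 M$ whose vertex groups are $\{\pi_1 S_k\}\cup\{\pi_1 N_j\}$ and whose edge groups are the $\mathbb Z^2$'s of the splitting tori.

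I then equip each vertex group with a relatively hyperbolic structure: each $\pi_1 N_j$ carries its standard structure hyperbolic relative to its cusp subgroups, while each Seifert $\pi_1 S_k$ is taken trivially hyperbolic relative to $\{\pi_1 S_k\}$. With these choices every edge group is parabolic on both sides: on the hyperbolic side it is a full cusp subgroup, and on the Seifert side it sits inside the unique peripheral subgroup. Theorem~A then applies and $\pi_1 M$ is hyperbolic relative to the collection of stabilizers of parabolic trees in the Bass-Serre tree $T$ of $\mathcal G$.

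It remains to identify these parabolic trees. Tracing the tree-building rule, at a Seifert vertex every incident edge group is contained in the unique peripheral subgroup $\pi_1 S_k$, so the parabolic tree contains every edge at that vertex and crosses into all Seifert neighbors; at a hyperbolic vertex only the unique edge corresponding to a recorded cusp is contained in that peripheral subgroup, since distinct cusps of $\pi_1 N_j$ are pairwise non-nested and non-conjugate. Consequently each parabolic tree is a maximal connected Seifert subtree $T_i\subset T$ together with its outgoing edges to adjacent hyperbolic vertices. The subtree $T_i$ is precisely the Bass-Serre tree of the induced splitting of $\pi_1 M_i$, so its setwise stabilizer in $\pi_1 M$ is $\pi_1 M_i$, and this stabilizer clearly preserves the boundary edges (hence the full parabolic tree) as well. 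The parabolic trees are thus in natural bijection with $M_1,\ldots,M_r$, and their stabilizers are $\pi_1 M_1,\ldots,\pi_1 M_r$, yielding the stated conclusion.

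The main obstacle is the parabolic-tree computation above: one must carefully distinguish how the tree extends across Seifert versus hyperbolic vertices and then recognize each maximal Seifert subtree as a Bass-Serre tree for the corresponding graph-manifold piece. The degenerate cases (closed hyperbolic $M$ with $r=0$, or $M$ itself a graph manifold with $r=1$ and $M_1=M$) fall out of the same argument.
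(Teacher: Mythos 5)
Your overall strategy---apply Theorem~\ref{first} to a graph-of-groups decomposition of $\pi_1 M$ in which the hyperbolic pieces carry their cusped peripheral structure and the non-hyperbolic vertex groups are declared hyperbolic relative to themselves---is the same idea the paper uses; the paper, however, applies it to the coarser bipartite splitting along only the ``transitional'' tori, so that each $\pi_1 M_i$ is itself a vertex group (with trivial peripheral structure), the parabolic trees are $i$-pods centered at those vertices, and Theorem~\ref{first} or Corollary~\ref{cor:first} gives the conclusion with no further computation. Your finer route through the full decomposition into Seifert and hyperbolic pieces is workable, but it forces you to identify parabolic-tree stabilizers, and that is where the argument has a genuine gap.

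The gap is in your claim that every parabolic tree is a maximal Seifert subtree together with its pendant edges, which traces back to describing the $M_i$ as ``the maximal connected unions of adjacent Seifert pieces.'' A component of the parabolic forest need not contain any Seifert vertex: if a decomposition torus separates two hyperbolic pieces (an isolated torus), the corresponding edge of the forest has both endpoints at cusp vertices over hyperbolic vertices and is a parabolic tree all by itself, with stabilizer the $\mathbb Z^2$ of that torus; likewise a boundary torus of $M$ lying on a hyperbolic piece yields a singleton parabolic tree with stabilizer $\mathbb Z^2$. Under your reading of the $M_i$ these $\mathbb Z^2$'s are not among $\{\pi_1 M_1,\dots,\pi_1 M_r\}$, so the asserted bijection fails; for instance, if $M$ is the double of a one-cusped hyperbolic manifold along its boundary torus, your argument would conclude that $\pi_1 M$ is hyperbolic, which is false. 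The statement is saved by the intended convention (visible in the paper's discussion of the JSJ scenario) that the $M_i$ are the components of the complement of the open hyperbolic pieces, which include thickened tori between adjacent hyperbolic parts and collars where a hyperbolic part meets $\partial M$; their fundamental groups are exactly the missing $\mathbb Z^2$'s. Your analysis of the components that do meet a Seifert vertex (maximal Seifert subtree plus pendant edges, with stabilizer conjugate to $\pi_1 M_i$) is correct, so the proof is repaired by adding the Seifert-free components and matching them with these thickened-torus and collar pieces.
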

As explained to us by the referee, the relative hyperbolicity of
$\pi_1(M)$ was previously proved by Drutu-Sapir using work of
Kapovich-Leeb.  This previous proof is deep as it uses the structure of the
asymptotic cone due to Kapovich-Leeb together with the technical
proof of Drutu-Sapir that asymptotically tree graded groups are
relatively hyperbolic \cite{KapovichLeeb1995,DrutuSapir2005}.


\section{Combining Relatively Hyperbolic Groups along Parabolics}\label{sec:rhg}
The class of relatively hyperbolic groups was introduced
by Gromov \cite{Gromov87} as a generalization of the class of fundamental groups
of complete finite-volume manifolds of pinched negative sectional curvature.
Various approaches to relative hyperbolicity were developed
by Farb \cite{Farb98}, Bowditch \cite{Bowditch99RH} and Osin \cite{OsinBook06}, and as surveyed by Hruska \cite{HruskaRelQC}, these notions are equivalent for finitely generated groups.
\begin{com} What about Bumagin? IS she more appropriate or should be added?\end{com}
 We follow Bowditch's approach:

\begin{defn}[Relatively Hyperbolic]A {\em{circuit}}
in a graph is an embedded cycle. A graph $\Gamma$ is {\em{fine}} if each edge of $\Gamma$ lies in finitely many circuits of length $n$ for each $n$.

A group G is \emph{hyperbolic
relative to a finite collection of subgroups} $\mathbb{P}$ if $G$ acts cocompactly(without inversions)\begin{com} take these two out and say cofinitely on the set of edges\end{com}
on a connected, fine, hyperbolic graph $\Gamma$ with finite edge stabilizers, such that each element of $\mathbb P$ equals the stabilizer of a vertex of $\Gamma$, and moreover, each infinite vertex stabilizer is conjugate to a unique element of $\mathbb P$.
We refer to a connected, fine, hyperbolic graph $\Gamma$ equipped with
such an action as a \emph{$(G; \mathbb{P})$-graph}. Subgroups of G that are conjugate into
subgroups in $\mathbb{P}$ are \emph{parabolic}.\begin{com}Bowditch def requires fg for parabolics\end{com}
\end{defn}
\begin{tecrem}\label{tr}
Given a finite collection of parabolic subgroups $\{A_1,\dots,A_r\}$\begin{com}Seems not clear\end{com}, we choose $\mathbb P$ so that there is a prescribed choice of parabolic subgroup $P_i\in \mathbb P$ so that $A_i$ is \emph{``declared''} to be conjugate into $P_i$. This is automatic for an infinite parabolic subgroup $A$ but for finite subgroups there could be ambiguity. One way to resolve this is to revise the choice of $\mathbb P$ as follows:
For any finite collection of parabolic subgroups $\{A_1,\dots,A_r\}$ in $G$, we moreover assume each $A_i$ is conjugate to a subgroup of $\mathbb P$  and we  assume that no two (finite) subgroups in $\mathbb P$ are conjugate.
We note that finite subgroups can be freely added to or omitted from the peripheral structure of $G$
(see e.g. \cite{MartinezPedrozaWise2011}).
\end{tecrem}

\begin{com}
A more general, and more sensible route to follow is:
Let $G$ split as a finite graph of groups, where each $G_v$ is relatively hyperbolic and is assigned a fine hyperbolic graph $K_v$,
and where each finite subgroup $P$ of $\mathbb P_v$ is assigned a ``chosen'' vertex $a_P$ of $K_v$ (infinite $P$ has $a_P$ uniquely determined already), and moreover each finite edge group (representative) of $G_v$  is assigned a ``chosen'' vertex.

The vertices of $F$ are the translates of $a_P$,
and an edge of $F$ are attached to its chosen vertices in $K_{\iota(e)}$ and $K_{\tau(e)}$.

This makes the embedding $F\subset K$ more natural.
\end{com}
\begin{com}$Gamma$ is used for graph of groups and and K for (G;P)-graph\end{com}
\begin{defn}[Parabolic tree]\label{parabolic tree}
Let $G$ split as a finite graph
of groups where each vertex group $G_\nu$ is hyperbolic relative to $\mathbb P_\nu$, and where each edge group $G_e$
embeds as a parabolic subgroup of its two vertex groups. Let $T$ be the Bass-Serre tree.
Define the \emph{parabolic forest} $F$ by:
\begin{enumerate}
\item A \emph{vertex} in $F$ is a pair $(u,P)$ where $u\in T^0$ and $P$ is a $G_u$-conjugate of an element of $\mathbb P_u$.

\item An \emph{edge} in $F$ is a pair $(e,G_e)$ where $e$ is an edge of $T$ and $G_e$ is its stabilizer.

\item The edge $(e,G_e)$ is \emph{attached} to $(\iota(e), \iota(P_e))$ and $(\tau(e), \tau(P_e))$ where $\iota(e)$ and $\tau(e)$ are the initial and terminal vertex of $e$ and $\iota(P_e)$ is the $G_{\iota(e)}$-conjugate of an element of $\mathbb P$ that is declared to contain $G_e$. Likewise for $(\tau(e), \tau(P_e))$. We arranged for this unique determination in Technical Remark~\ref{tr}.

\end{enumerate}

Each component of $F$ is a \emph{parabolic tree} and the map $F\rightarrow T$ is injective on the set of edges, and in particular each parabolic tree embeds in $T$.
Let $S_1,\dots,S_j$ be representatives of the finitely many orbits of parabolic trees under the $G$ action on $F$. Let $Q_i=stab(S_i)$, for each $i$.
\end{defn}
\begin{thm}[Combining Relatively Hyperbolic Groups Along Parabolics]\label{first}
Let $G$ split as a finite graph $\Gamma$ of groups. Suppose each vertex group is relatively hyperbolic and each edge group is parabolic in its vertex groups. Then $G$ is hyperbolic relative to $\mathbb Q=\{Q_1,\dots, Q_j\}$.
\end{thm}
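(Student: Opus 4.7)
The plan is to realize the fine hyperbolic $(G;\mathbb Q)$-graph directly, following the two-step construction outlined in the introduction. First I would choose, for each vertex $\nu$ of the underlying graph $\Gamma$, a fine hyperbolic $(G_\nu;\mathbb P_\nu)$-graph $K_\nu$ with a distinguished vertex $a_P$ for each $P\in\mathbb P_\nu$ (the one uniquely determined when $P$ is infinite, or selected in accordance with Technical Remark~\ref{tr} when $P$ is finite). Using the Bass-Serre tree $T$, I would then form the \emph{tree of spaces} $K_G$ whose vertex spaces are the $G$-translates of the $K_\nu$'s indexed by $T^0$, and whose edges consist of one extra edge for every edge $e$ of $T$, glued to the vertices $\iota(a_{P_e})$ in $K_{\iota(e)}$ and $\tau(a_{P_e})$ in $K_{\tau(e)}$ dictated by the parabolic forest $F$. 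By construction the forest $F$ embeds in $K_G$, each edge of $F$ is an edge of $K_G$, and the $G$-action on $K_G$ has the given stabilizers on vertex-space edges and edge-group stabilizers on the new edges.

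Second, I would pass to $\bar K_G$ by collapsing each parabolic tree $S\subset F\subset K_G$ to a single vertex $v_S$. Because only the \emph{vertices} of each parabolic tree are identified (the edges survive), the quotient map $K_G\to\bar K_G$ is a combinatorial quotient of graphs, the new edge stabilizers are just the edge group stabilizers intersected with a $Q_i$, and since edge groups act trivially on their own edge, these edge stabilizers become finite (in fact they were already finite in the vertex spaces, and the new edges of $K_G$ inherit trivial edge-stabilizers once their endpoints are collapsed). The $G$-action on $\bar K_G$ is cofinite on edges because $\Gamma$ is finite and each $K_\nu$ carries a cofinite $G_\nu$-action.

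The main work is to verify that $\bar K_G$ is connected, fine, and hyperbolic, and that its vertex stabilizers are precisely the $Q_i$ (up to conjugacy) together with the finite stabilizers inherited from the $K_\nu$. Connectedness is immediate from connectedness of $T$ and the $K_\nu$. For fineness I would argue that any circuit $c$ in $\bar K_G$ lifts to a path in $K_G$ whose image in $T$ is a closed walk; since $T$ is a tree, the lifted image in $T$ must backtrack, and combined with the constraint that each parabolic tree embeds in $T$, the circuit must be contained in a single vertex space $K_\nu$, reducing fineness of $\bar K_G$ to fineness of each $K_\nu$. Hyperbolicity follows similarly: any geodesic triangle in $\bar K_G$ decomposes according to $T$, and within each vertex space the sides are controlled by $\delta$-hyperbolicity of $K_\nu$, while the tree-of-spaces structure (with $0$-dimensional gluing at \emph{single} vertices after the collapse) prevents thin triangles from degenerating. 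The key point is that collapsing a parabolic tree to a vertex does not create new short-cuts across different vertex spaces, because the parabolic trees embed in $T$.

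The likely obstacle is the identification of vertex stabilizers of $\bar K_G$, and showing that each infinite one is conjugate to a \emph{unique} $Q_i$. A vertex of $\bar K_G$ is either (i) the image of a non-parabolic vertex of some $K_\nu$, whose stabilizer is finite and conjugate to something inherited from $\mathbb P_\nu$ already in $\mathbb Q$ (or added as a trivial/finite parabolic); or (ii) the collapse point $v_S$ of a parabolic tree $S$, whose stabilizer is by definition $\mathrm{stab}(S)$, which is a $G$-conjugate of one of the $Q_i$'s. Uniqueness follows because two parabolic trees in distinct $G$-orbits produce distinct $G$-orbits of collapse vertices, and within a single orbit the stabilizer is determined up to conjugacy. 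Combining these checks with the already-established cocompact action on edges, finite edge stabilizers, fineness, and hyperbolicity verifies the defining conditions and yields that $G$ is hyperbolic relative to $\mathbb Q$.
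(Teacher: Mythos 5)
Your overall architecture is the one the paper uses (tree of spaces $K$ over the Bass--Serre tree, parabolic forest $F$, quotient $\bar K$, fineness and hyperbolicity because circuits live in vertex spaces, vertex stabilizers of collapsed parabolic trees giving the $Q_i$), but there is a genuine error at the crucial step where the finite edge stabilizer condition is verified. You collapse each parabolic tree $S$ to a vertex $v_S$ while letting the edges of $F$ ``survive'' as loops at $v_S$, and you claim these edges acquire finite (even trivial) stabilizers because ``edge groups act trivially on their own edge'' and because their endpoints have been identified. This is false: the setwise stabilizer in $G$ of the edge $(e,G_e)$ is $G_e$ itself, and identifying or collapsing its endpoints does not shrink that stabilizer. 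In the situation of Theorem~\ref{first} the edge groups are typically infinite parabolic subgroups (e.g.\ $\mathbb{Z}^2$ in the cusped $3$--manifold example), so the graph you produce has edges with infinite stabilizers and is therefore not a $(G;\mathbb{Q})$-graph in Bowditch's sense; the whole point of passing from $K$ to $\bar K$ is to eliminate exactly these edges. The statement ``trivial action on the edge'' gives containment $G_e\le\mathrm{Stab}(e)$, not finiteness.

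The repair is what the paper actually does: define $\bar K$ by \emph{contracting} each edge space, i.e.\ each new edge of $K$, so that these edges do not appear in $\bar K$ at all. Contracting all edges of $F$ is precisely the collapse of each parabolic tree to a point, but with the new edges removed rather than retained as loops; the edges of $\bar K$ are then exactly the edges of the vertex graphs $K_\nu$, whose stabilizers are finite because each $K_\nu$ is a $(G_\nu;\mathbb{P}_\nu)$-graph, and the preimage of each open edge of $\bar K$ is a single open edge of $K$. With that correction the rest of your outline goes through essentially as in the paper: cofiniteness of the action, fineness and hyperbolicity since every embedded cycle of $\bar K$ lies in (the image of) a single vertex space, and vertex stabilizers of collapsed parabolic trees equal to conjugates of the $Q_i$ (the choices of Technical Remark~\ref{tr} ensuring the forest is well defined). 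One further small caution: your case (i) asserts that vertices of $K_\nu$ outside $F$ have finite stabilizers, which need not be literally true as stated; the relevant point is that the infinite vertex stabilizers one must account for are handled through the parabolic trees and the choices of distinguished vertices, so be careful not to lean on finiteness there.
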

\begin{proof}
For $u\in \Gamma^0$, let $G_u$ be hyperbolic relative to $\mathbb P_u$ and let $K_u$ be a $(G_u; \mathbb{P}_u)$-graph. For each $P\in \mathbb P_u$, following the Technical Remark~\ref{tr}, we choose a specific vertex of $K_u$ whose stabilizer equals $P$. Note that, in general there could be more than one possible choice when $|P|<\infty$, but by Technical Remark~\ref{tr} we have a unique choice. Translating determines a ``choice'' of vertex for conjugates.

We  now construct a $(G; \mathbb Q)$-graph $\bar K$.
Let $K$ be the tree of spaces whose underlying tree is the Bass-Serre tree $T$ with the following properties:

\begin{enumerate}
\item Vertex spaces of $K$ are copies of appropriate elements in $\{K_u: u\in \Gamma^0\}$. Specifically, $K_\nu$ is a copy of $K_u$ where $u$ is the image of $\nu$ under $T\rightarrow \Gamma$.
\item Each edge space $K_e$ is an ordinary edge, denoted as an ordered pair $(e,G_e)$ that is attached to the vertices in  $K_{\iota(e)}$ and $K_{\tau(e)}$ that were chosen to contain $G_e$.

\end{enumerate}
Note that each $G_{\nu}$ acts on $K_\nu$ and there is a $G$-equivariant map $K\rightarrow T$. Let $\bar K$ be the quotient of $K$ obtained by contracting each edge space. Observe that $G$ acts on $\bar K$ and there is a $G$-equivariant map $K\rightarrow \bar K$. Moreover the preimage of each open edge of $\bar K$ is a single open edge of $K$.

We now show that $\bar K$ is a $(G; \mathbb Q)$-graph. Since any embedded cycle lies in some vertex space, the graph $\bar K$ is fine and hyperbolic. There are finitely many orbits of vertices in $K$ and therefore finitely many orbits of vertices in $\bar K$. Likewise, there are finitely many orbits of edges in $\bar K$. The stabilizer of an (open) edge of $\bar K$ equals the stabilizer of the corresponding (open) edge in $K$, and is thus finite. By construction, there is a $G$-equivariant embedding $F\hookrightarrow K$ where $F$ is the parabolic forest associated to $G$ and $T$. Finally, the preimage in $K$ of a vertex of $\bar K$ is precisely a parabolic tree and thus the stabilizer of a vertex of $\bar K$ is a conjugate of some $Q_j$.
\end{proof}

We now examine some conclusions that arise when the parabolic trees are small. An extreme case arises when the edge groups are isolated from each other as follows:

\begin{cor}\label{cor:first}
Let $G$ split as a finite directed graph of groups where each vertex group $G_{\nu}$ is hyperbolic relative to $\mathbb P_{\nu}$.
Suppose that:
\begin{enumerate}
\item Each edge group is parabolic in its vertex groups.
\item Each outgoing infinite edge group $G_{\vec{e}}$ is maximal parabolic in its initial vertex group $G_{\nu}$ and for each other incoming and outgoing infinite edge group $G_{{}\cev e }$ or $G_{\vec{d}}$ or $G_{\cev d}$, none of its conjugates lie in
    $G_{\vec{e}}$.
\end{enumerate}

Then $G$ is hyperbolic relative to $\mathbb P=\bigcup_{\nu} \mathbb P_{\nu}-\{\text{outgoing edge groups}\}$.\begin{com}There is a more complex version. Number the edges and assume no $G_{\vec{d}}$ or $G_{\cev {d}}$ with $d\leq e$ is conjugate into $G_{\hspace{.1mm}\cev{e}}$...
\end{com}
\end{cor}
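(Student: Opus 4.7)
The plan is to invoke Theorem~\ref{first} and then identify the parabolic-tree stabilizers $\mathbb Q$ with the proposed collection $\mathbb P$, using condition~(2) to show that the parabolic trees have a very simple shape. Applying Theorem~\ref{first} immediately gives that $G$ is hyperbolic relative to $\mathbb Q=\{Q_1,\dots,Q_j\}$, where the $Q_i$ are stabilizers of orbit representatives of parabolic trees in the parabolic forest $F$; it remains only to identify these stabilizers.

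I would then classify the structure of parabolic trees. Call a vertex $(u,P)$ of $F$ an \emph{outgoing vertex} if $P=G_{\vec e}$ for some outgoing edge $\vec e$ at $u$, and otherwise a \emph{hub}. Using condition~(2) I would verify two claims. First, each outgoing vertex $(u,G_{\vec e})$ has exactly one attached edge of $F$: the maximality of $G_{\vec e}$ in $G_u$ forces $N_{G_u}(G_{\vec e})=G_{\vec e}$, so the various $G_u$-translates of $e$ at $u$ that could attach to $(u,G_{\vec e})$ collapse to a single edge; and condition~(2) says no $G_u$-conjugate of any other edge group at $u$ lies inside $G_{\vec e}$, so no other edge is attached. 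Second, no edge of $F$ has both endpoints at outgoing vertices: if the edge $e$ had endpoints at outgoing vertices $(\iota(e),G_{\vec e})$ and $(\tau(e),G_{\vec f})$ for some outgoing $\vec f$ at $\tau(e)$, then $G_{\cev e}\leq G_{\vec f}$, violating condition~(2).

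Together these claims imply that every parabolic tree is either an isolated hub, or a ``star'' with a single hub at the center and pendant outgoing leaves. In either case the stabilizer of the tree equals the stabilizer of its hub $(u,P)$, which is the maximal parabolic $P\leq G_u$ corresponding to some element of $\mathbb P_u-\{\text{outgoing edge groups at }u\}$. Since the $G$-orbits of hubs correspond precisely to $\bigcup_\nu \mathbb P_\nu-\{\text{outgoing edge groups}\}$, with the finite-subgroup ambiguities absorbed as in Technical Remark~\ref{tr}, this collection coincides up to conjugacy with $\mathbb Q$, proving the corollary.

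The main obstacle is the careful analysis of incident edges at an outgoing vertex: one must pass correctly between ``edges at $u$ in $\Gamma$'' and ``edges at a lift of $u$ in the Bass-Serre tree $T$'', which are related by $G_u$-cosets of edge groups, and apply condition~(2) precisely to reduce the latter set to a single edge. Once that bookkeeping is established, the corollary follows simply by reading off the stabilizer of each star.
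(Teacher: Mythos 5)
Your proposal is correct and takes essentially the same route as the paper: the paper likewise invokes Theorem~\ref{first} and observes that, after arranging the finitely stabilized edges of $F$ to attach to distinct chosen vertices (via Technical Remark~\ref{tr}), condition (2) forces every parabolic tree to be a singleton or an $i$-pod whose edges all terminate at one central vertex $(\nu,P^g)$, so the tree stabilizers are exactly the conjugates of $\bigcup_\nu \mathbb P_\nu-\{\text{outgoing edge groups}\}$. Your write-up merely makes explicit the leaf/degree-one analysis (self-normalization of infinite maximal parabolics and the non-conjugation clause of (2)) that the paper leaves implicit.
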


\begin{proof}
We can arrange
 \begin{com} We must either use the more systematic definition of $F$ (hidden above),
 or alternately, we can observe that the parabolic trees have stabilizers that naturally split as graphs of (desired) parabolic
 subgroups over finite edge groups, and so we can decompose to get the claimed result...
 \end{com}
 for finitely stabilized edges of $F$ to be attached to distinct chosen vertices when they correspond to distinct edges of $T$. Thus, parabolic trees are singletons and/or $i$-pods consisting of edges that all terminate at the same vertex $\{(\nu,P^g)\}$ where $P\in \mathbb P_\nu$ and $g\in G_\nu$. Recall that an \emph{$i$-pod} is a tree consisting of $i$~edges glued to a central vertex.
\end{proof}

\begin{cor}\label{hypspli}
Let $G$ split as a finite graph of groups. Suppose each vertex group $G_{\nu}$ is hyperbolic relative to $\mathbb P_{\nu}$. For each $G_\nu$ assume that the collection $\{G_e: e \textrm{ is attached to } \nu \}$ is a collection of maximal parabolic subgroups of $G_\nu$. Then $G$ is hyperbolic relative to $\mathbb P=\bigcup_{\nu} \mathbb P_{\nu}-\{\text{repeats}\}$.
 Specifically, we remove an element of $\bigcup_{\nu} \mathbb P_{\nu}$ if it is conjugate to another one.\begin{com} Mark offers to put a def of repeats as a equivalent relation\end{com}
\end{cor}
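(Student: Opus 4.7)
The plan is to apply Theorem~\ref{first} and then identify the resulting peripheral structure with $\bigcup_\nu \mathbb P_\nu-\{\textrm{repeats}\}$.

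Since maximal parabolic edge groups are in particular parabolic, Theorem~\ref{first} applies: $G$ is hyperbolic relative to $\mathbb Q=\{Q_1,\ldots,Q_j\}$ where each $Q_i$ stabilizes a parabolic tree $S_i$. To identify $\mathbb Q$ with the desired collection, I first analyze the parabolic forest $F$ under the maximality hypothesis. For each edge $(e,G_e)$ of $F$, the declared parabolic $\iota(P_e)\in \mathbb P_{\iota(e)}$ containing $G_e$ must equal $G_e$ itself, since $G_e$ is a maximal parabolic of $G_{\iota(e)}$ containing $G_e$ and distinct maximal parabolics in a relatively hyperbolic group are almost malnormal. Similarly $\tau(P_e)=G_e$. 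Thus every edge of $F$ joins two vertices labeled by the same subgroup $G_e\leq G$, and induction shows that each connected component $S\subset F$ has all of its vertex labels equal to a common subgroup $P_S\leq G$; for an isolated vertex $\{(\nu,P)\}$ this common label is just $P$.

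The map sending $P\in \mathbb P_\nu$ to the orbit of the parabolic tree through $(\nu,P)$ is surjective onto the set of $G$-orbits of parabolic trees, and its fibers are precisely the $G$-conjugacy classes in $\bigcup_\nu\mathbb P_\nu$ -- that is, the ``repeats''. This yields a bijection $\mathbb Q \leftrightarrow \bigcup_\nu\mathbb P_\nu-\{\textrm{repeats}\}$. The final step is to show that $Q_i$ equals $P_{S_i}$. Trivially $P_{S_i}\leq Q_i$, since $P_{S_i}$ lies in every vertex group of the underlying subtree and fixes each vertex of $S_i$. Conversely, any $g\in Q_i$ sends a vertex $(u,P_{S_i})$ of $S_i$ to $(gu,gP_{S_i}g^{-1})$, which is again a vertex of $S_i$ only if $gP_{S_i}g^{-1}=P_{S_i}$, so $Q_i\leq N_G(P_{S_i})$. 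Using self-normalization of infinite maximal parabolic subgroups of relatively hyperbolic groups together with a Bass-Serre analysis of how $N_G(P_{S_i})$ acts on the underlying subtree of $S_i$ in $T$, we deduce $N_G(P_{S_i})=P_{S_i}$ and conclude $Q_i=P_{S_i}$.

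The main obstacle is this last identification $Q_i=P_{S_i}$, which requires the strict equality $N_G(P_{S_i})=P_{S_i}$ rather than mere commensurability. The argument combines the almost-malnormality of $P_{S_i}$ in each vertex group containing it with the observation that an element of $N_G(P_{S_i})$ that does not lie in any single vertex group would induce a non-trivial permutation of the vertices of $S_i$ in the Bass-Serre tree $T$, contradicting the graph-of-groups structure.
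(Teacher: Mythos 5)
Your overall route is the same as the paper's: apply Theorem~\ref{first}, observe that under the maximality hypothesis all vertices of a given parabolic tree carry a common label $P_S$, and then identify each tree stabilizer $Q_i$ with $P_{S_i}$, matching $G$-orbits of parabolic trees with conjugacy classes in $\bigcup_\nu\mathbb P_\nu$. The gap is in the final identification $Q_i=P_{S_i}$. You reduce it to $N_G(P_{S_i})=P_{S_i}$ and appeal to ``self-normalization of infinite maximal parabolic subgroups of relatively hyperbolic groups''; but self-normalization is only available inside each vertex group $G_u$, where $P_{S_i}$ is known to be a conjugate of a peripheral subgroup. In $G$ itself, that $P_{S_i}$ is maximal parabolic is exactly what the corollary is asserting, so invoking it is circular. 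Your fallback claim --- that an element of $N_G(P_{S_i})$ lying in no vertex group would permute the vertices of $S_i$ nontrivially, ``contradicting the graph-of-groups structure'' --- is not a contradiction: such elements genuinely occur. In an HNN extension $G=G_1*_{P^t=P'}$ where the edge group is identified with (a conjugate of) the same maximal parabolic, the case treated separately in Corollary~\ref{mlemma}(3), the stable letter normalizes a conjugate of $P$, lies in no vertex group, translates along the parabolic tree, and the tree stabilizer is $\langle P,t\rangle\neq P$; there the peripheral structure must be enlarged accordingly. So the equality $Q_i=P_{S_i}$ is precisely the point where the hypothesis (read so that the edge-group images at each vertex form an almost malnormal family of maximal parabolics, i.e.\ no two attaching images are conjugate) must be used, and your argument never uses it. A similar, smaller omission occurs in your bijectivity claim (``conjugate peripherals give trees in the same orbit''), which also needs an argument via the fixed geodesic in $T$, and in the assertion $\iota(P_e)=G_e$, which for finite edge groups is a choice made via Technical Remark~\ref{tr} rather than a consequence of maximality.

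The paper's (implicit) proof closes this gap by controlling the shape of the parabolic trees, exactly as in the proofs of Corollary~\ref{cor:first} and Corollary~\ref{mlemma}(1): under the hypothesis, a vertex $(u,P)$ of $F$ with $P$ infinite meets at most one edge of $F$ in each $G_u$-orbit of edges (since $N_{G_u}(P)=P$), and distinct orbits have non-conjugate stabilizers, so the parabolic trees are singletons or $i$-pods --- in particular bounded subtrees of $T$. Consequently any $g\in Q_i$ fixes a vertex of $S_i$ (the action is without inversions and the tree is bounded), hence lies in some vertex group $G_u$ and normalizes $P_{S_i}$ there, and self-normalization \emph{inside the vertex group} yields $g\in P_{S_i}$. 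If you insert this boundedness step (together with the finite-edge-group bookkeeping of Technical Remark~\ref{tr}), your identification of $\mathbb Q$ with $\bigcup_\nu\mathbb P_\nu-\{\text{repeats}\}$ goes through.
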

The first two of the following cases were treated by Dahmani, Alibegovi\'{c}, and~Osin
\cite{Alibegovic05,Dahmani03,Osin2006}:
\begin{cor}\label{mlemma}
\begin{enumerate}
\item Let $G_1$ and $G_2$ be hyperbolic relative to $\mathbb P_1$ and $\mathbb P_2$. Let $G=G_1 \ast_{P_1={P_2'}} G_2$ where each $P_i\in \mathbb P_i$ and $P_1$ is identified with the subgroup ${P_2}'$ of $P_2$. Then $G$ is hyperbolic relative $\mathbb P_1\cup \mathbb P_2-\{P_1\}$.
\item  Let $G_1$ be hyperbolic relative to $\mathbb P$. Let $P_1\in \mathbb P$ be isomorphic to a subgroup ${P_2}'$ of a maximal parabolic subgroup $P_2$
not conjugate to $P_1$. Let $G = G_1*_{{P_1}^t={P_2}'}$ where ${P_1}^t=t^{-1}P_1t$. Then $G$ is
hyperbolic relative to $\mathbb P-\{P_1\}$.
\item Let $G_1$ be hyperbolic relative to $\mathbb P$. Let $P\in \mathbb P$ be isomorphic to ${P}'\leq P$. Let $G = G_1*_{{P}^t={P}'}$. Then $G$ is
hyperbolic relative to $\mathbb P\cup \langle P, t \rangle-\{P\}$.
\end{enumerate}
\end{cor}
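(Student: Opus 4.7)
The plan is to derive each of the three items by applying Theorem~\ref{first} and then computing $\mathbb Q$ explicitly. In all three cases the single edge group is parabolic in each incident vertex group by hypothesis, so Theorem~\ref{first} immediately asserts that $G$ is hyperbolic relative to the collection $\mathbb Q$ of stabilizers of $G$-orbit representatives of parabolic trees in the associated parabolic forest $F$. What remains is to trace out the connected components of $F$ and identify their stabilizers.

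Two general observations drive every computation. First, at a parabolic $F$-vertex $(\nu, gPg^{-1})$ the incident $F$-edges correspond to edges of the Bass--Serre tree at $\nu$ whose stabilizer lies in $gPg^{-1}$; when $P$ is a maximal parabolic, the facts that distinct maximal parabolics intersect in finite subgroups and that maximal parabolics are self-normalizing both enumerate these edges and locate their other endpoints in $F$. Second, a parabolic $Q\in\mathbb P_\nu$ containing no conjugate of any edge group yields an isolated $F$-vertex, contributing a singleton parabolic tree with stabilizer conjugate to $Q$.

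For (1) the Bass--Serre tree is bipartite. At a $G_1$-vertex each $P_1$-type $F$-vertex $(\nu, gP_1g^{-1})$ has a unique incident $F$-edge (since $P_1=C$ is maximal and self-normalizing), while at a $G_2$-vertex each $P_2$-type $F$-vertex $(\omega, hP_2h^{-1})$ has one incident $F$-edge for each coset of $P_2'$ in $P_2$. So each non-singleton parabolic tree is a star centered at a $P_2$-type vertex with $P_1$-type leaves; the central $T$-vertex is the unique $G_2$-vertex of the star, which collapses the stabilizer to $N_{G_2}(P_2)=P_2$. All other parabolics in $\mathbb P_1\cup\mathbb P_2$ contribute singletons, yielding $\mathbb Q = \{P_2\}\cup(\mathbb P_1-\{P_1\})\cup(\mathbb P_2-\{P_2\}) = \mathbb P_1\cup\mathbb P_2-\{P_1\}$. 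Case (2) is strictly analogous: the non-singleton parabolic trees are again $P_2$-centered stars with $P_1$-type leaves and stabilizer $P_2$, and the rest of $\mathbb P$ is isolated, yielding $\mathbb P-\{P_1\}$.

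For (3) the parabolic trees are no longer finite, and I expect this to be the main obstacle. Here both ``sides'' of each edge attach to $P$-type $F$-vertices, and each $(\nu, P)$ carries $1+|P/P'|$ incident $F$-edges leading to further $P$-type vertices. Tracing $F$ out identifies the parabolic tree through $(\nu,P)$ with the minimal $\langle P,t\rangle$-invariant subtree of $T$, which is the Bass--Serre tree of the HNN sub-extension $\langle P,t\rangle = P*_{P^t=P'}$ of $G$. The subgroup $\langle P,t\rangle$ plainly stabilizes this parabolic tree; conversely any $g\in G$ stabilizing it sends $(\nu,P)$ to some $(h\nu, hPh^{-1})$ with $h\in\langle P,t\rangle$, so $h^{-1}g\in G_1\cap N_G(P)=N_{G_1}(P)=P$, whence $g\in\langle P,t\rangle$. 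As the remaining parabolics in $\mathbb P-\{P\}$ again contribute singletons, $\mathbb Q = \mathbb P\cup\langle P,t\rangle-\{P\}$, as claimed.
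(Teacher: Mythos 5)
Your proposal is correct and takes essentially the same route as the paper's own (very terse) proof: apply Theorem~\ref{first} and identify the parabolic trees as singletons, as $i$-pods/stars centered at conjugates of $P_2$ in items (1)--(2), or as translates of the Bass--Serre tree of $P*_{P^t=P'}$ with stabilizer a conjugate of $\langle P,t\rangle$ in item (3). You simply spell out the edge-counting and self-normalization details that the paper leaves implicit.
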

\begin{rem}
Note that in this Corollary and some similar results when we say $P_i\in \mathbb P_i$, we mean if $P^g_i \in \mathbb P_i$ then replace $P^g_i$ by $P_i$ in $\mathbb P_i$.
\end{rem}

\begin{proof}
{\bf (1)}: In this case, the parabolic trees are either singletons stabilized by a conjugate of an element of $\mathbb P_1\cup \mathbb P_2-\{P_1\}$, or parabolic trees are $i$-pods stabilized by conjugates of $P_2$.

{\bf (2)}: The proof is similar.

{\bf (3)}: All parabolic trees are singletons except for those that are translates of a copy of the Bass-Serre tree for $P*_{P^t=P'}$. Following the proof of Theorem~\ref{first}, let $\nu \in \bar K$, if the preimage of $\nu$ in $K$ is not attached to an edge space, then $G_\nu$ is conjugate to an element of $\mathbb P-\{P\}$, otherwise $G_\nu$ is conjugate to $\langle P, t \rangle$.
\end{proof}

\begin{exmp}
We encourage the reader to consider the case of Theorem~\ref{first} and Corollaries~\ref{hypspli}~and~\ref{mlemma}, in the scenario where $G$ splits as a graph of free groups with cyclic edge groups.
A very simple case is:
Let $G=\langle a, b, t ~|~ (W^n)^t=W^m\rangle$ where $W\in \langle a, b \rangle$ and $m,n\geq 1$. Then $G$ is hyperbolic relative to $\langle W,t\rangle$.
\end{exmp}

\section{Relative Quasiconvexity}\label{sec:qc}
Dahmani introduced the notion of relatively quasiconvex subgroup in \cite{Dahmani03}.
This notion was further developed by Osin  in \cite{OsinBook06}, and later
Hruska  investigated several equivalent definitions of relatively quasiconvex subgroups \cite{HruskaRelQC}.
Martinez-Pedroza and the second author introduced a definition of relative quasiconvexity in the context of fine hyperbolic graphs and showed this definition is equivalent to Osin's definition \cite{MartinezPedrozaWise2011}. We will study relatively quasiconvexity
using this fine hyperbolic viewpoint.
Our aim is to examine the relative quasiconvexity of a certain subgroup which are themselves amalgams,
and we note that powerful results in this direction are given in \cite{MartinezPedrozaCombinations2009}.

\begin{defn}[Relatively Quasiconvex]\label{rqc}
Let $G$ be hyperbolic relative to $\mathbb{P}$. A subgroup $H$
of $G$ is \emph{quasiconvex relative to $\mathbb{P}$} if for some (and hence any) $(G; \mathbb{P})$-graph $K$, there is a nonempty connected and quasi-isometrically embedded, $H$-cocompact subgraph $L$ of $K$. In the sequel, we sometimes refer to $L$ as a \emph{quasiconvex $H$-cocompact subgraph} of $K$.
\end{defn}

\begin{rem}\label{fg for lqc}
It is immediate from the Definition~\ref{rqc} that in a relatively hyperbolic group, any parabolic subgroup is relatively quasiconvex, and any relatively quasiconvex subgroup is also relatively hyperbolic. In particular, the relatively quasiconvex subgroup $H$ is hyperbolic relative to the collection  $\mathbb P_H$ consisting of representatives of $H$-stabilizers of vertices of $L\subseteq K$. Note that a conjugate of a relatively quasiconvex subgroup is also relatively quasiconvex.
\begin{com} conjugate has easy proof, and intersection harder, but both obtainable from fine viewpoint. intersection needs reference\end{com}
And the intersection of two relatively quasiconvex subgroups is relatively quasiconvex.
Specifically, this last statement was proven when $G$ is f.g. in \cite{MartinezPedrozaCombinations2009},
and when $G$ is countable in \cite{HruskaRelQC}.
\end{rem}




Relative quasiconvexity has the following transitive property
proven by Hruska for countable relatively hyperbolic groups in \cite{HruskaRelQC}:

\begin{lem}\label{associative}
Let $G$ be hyperbolic relative to $\mathbb P_G$.
Suppose that $B$ is relatively quasiconvex in $G$,
and note that $B$ is then hyperbolic relative to $\mathbb P_B$ as in Remark~\ref{fg for lqc}.
Then $A\leq B$ is quasiconvex relative to $\mathbb P_B$ if and only if $A$ is quasiconvex relative to $\mathbb P_G$.
\end{lem}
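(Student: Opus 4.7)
The plan is to prove both implications directly via the fine hyperbolic graph definition of relative quasiconvexity. Fix a $(G;\mathbb P_G)$-graph $K$ and a connected, quasi-isometrically embedded, $B$-cocompact subgraph $L_B\subseteq K$ witnessing the quasiconvexity of $B$; by Remark~\ref{fg for lqc}, $L_B$ itself serves as a $(B;\mathbb P_B)$-graph. The forward direction is immediate: if $L_A\subseteq L_B$ is a connected, $A$-cocompact, quasi-isometrically embedded subgraph witnessing $A$ quasiconvex in $(B;\mathbb P_B)$, then $L_A\hookrightarrow K$ factors through $L_B$ as a composition of quasi-isometric embeddings, hence is itself a quasi-isometric embedding, and $L_A$ remains connected and $A$-cocompact in $K$.

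For the converse, suppose $A$ is quasiconvex in $(G;\mathbb P_G)$, witnessed by a connected, quasi-isometrically embedded, $A$-cocompact subgraph $L_A'\subseteq K$. Fix a basepoint $v_0\in L_B$; since $L_B$ is $B$-cocompact and $A\leq B$, the orbit $Av_0$ lies in $L_B$. I would construct a connected, $A$-cocompact subgraph $L_A\subseteq L_B$ as the union of $A$-translates of a finite collection of $L_B$-paths linking $v_0$ to enough nearby $A$-translates to guarantee connectivity; the existence of such a finite collection follows from $A$-cocompactness of $L_A'$ and $B$-cocompactness of $L_B$, via quasi-isometric approximation of $L_A'$-paths by $L_B$-paths.

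To verify that $L_A$ is quasi-isometrically embedded in $L_B$, let $\gamma$ be an $L_B$-geodesic between two vertices of $L_A$. Since $L_B\hookrightarrow K$ is a quasi-isometric embedding, $\gamma$ is a quasi-geodesic in the hyperbolic graph $K$, and by stability of quasi-geodesics stays within bounded $K$-distance of a $K$-geodesic $\gamma^*$ with the same endpoints. Relative quasiconvexity of $A$ in $G$ puts $\gamma^*$ in a bounded $K$-neighborhood of $L_A'$, and $A$-cocompactness of $L_A'$ places $L_A'$ within bounded $K$-distance of $Av_0\subseteq L_A$. Each vertex of $\gamma$ therefore lies within uniformly bounded $K$-distance of some vertex of $L_A\subseteq L_B$; applying the quasi-isometric embedding $L_B\hookrightarrow K$ in the reverse direction (bounding $L_B$-distance above by $K$-distance between pairs of vertices in $L_B$) converts this into the required $L_B$-bound.

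The main obstacle is precisely this last $K$-to-$L_B$ conversion. In a fine hyperbolic graph $K$, two vertices of $L_B$ may be very close in $K$ while joined in $K$ only by paths traversing parabolic vertices of $K\setminus L_B$, so $K$-closeness does not a priori imply $L_B$-closeness. It is exactly the quasi-isometric embedding hypothesis on $L_B\hookrightarrow K$, applied to pairs of vertices already lying in $L_B$, that supplies the uniform upper bound on $L_B$-distance in terms of $K$-distance needed to close the argument.
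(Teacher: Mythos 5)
Your forward direction is the same as the paper's and is fine. The converse, however, has a genuine gap at exactly the point you flag as the main obstacle. The "reverse" use of the embedding $L_B\hookrightarrow K$ is legitimate (for $p,q\in L_B$ one gets $d_{L_B}(p,q)\le\lambda\,d_K(p,q)+\lambda\epsilon$), but what it yields is only that the $L_B$-geodesic $\gamma$ stays in a uniformly bounded $L_B$-neighborhood of $L_A$. That is quasiconvexity in the neighborhood sense, not the quasi-isometric embedding required by Definition~\ref{rqc}: you still owe an upper bound on $d_{L_A}(x,y)$ in terms of $d_{L_B}(x,y)$. The usual bridge -- pick $z_i\in L_A$ with $d_{L_B}(\gamma(i),z_i)\le D$ and join consecutive $z_i$ by uniformly short paths \emph{inside} $L_A$ -- uses local finiteness or properness of the action, and both fail here: $K$ and $L_B$ are fine but not locally finite, and $A$ acts with infinite parabolic vertex stabilizers, so a ball $B_{L_B}(z_i,2D+1)$ may contain infinitely many vertices of $L_A$ with no control on their $L_A$-distance from $z_i$. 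Nothing in your argument supplies that control. The same non-properness issue is hidden in your construction of $L_A$: the elements of $A$ moving $v_0$ a bounded $K$-distance include entire infinite vertex stabilizers of $L_A'$, so choosing ``a finite collection of $L_B$-paths to nearby $A$-translates'' does not obviously give an $A$-cocompact subgraph unless the paths are chosen equivariantly with respect to those parabolic subgroups (for instance routed through vertices of $L_B$ that they fix, whose existence itself requires an argument).

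The paper sidesteps all of this by never building anything inside $L_B$. Given the $A$-cocompact quasi-isometrically embedded $M\subset K$, it enlarges the $B$-graph to $L'=L_B\cup BM$: since $M$ has finitely many $A$-orbits of vertices and $L_B$ is $B$-invariant, $L'$ is $B$-cocompact and lies in a bounded neighborhood of $L_B$, hence is again quasi-isometrically embedded and (after connecting, if necessary, by finitely many $B$-orbits of arcs) serves as a $(B;\mathbb P_B)$-graph. Then $M\subseteq L'$ is \emph{automatically} quasi-isometrically embedded, because $d_{L'}\ge d_K$ on $L'$ and $M\hookrightarrow K$ is a quasi-isometric embedding; the ``for some (and hence any)'' clause of Definition~\ref{rqc} finishes the proof. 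So the key idea you are missing is to change the ambient $(B;\mathbb P_B)$-graph rather than to locate a quasiconvex copy of $A$ inside the given one; without that (or a careful substitute for the properness arguments), your final step conflates the neighborhood statement with the required distance comparison and does not close.
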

\begin{proof}
Let $K$ be a $(G; \mathbb{P}_G)$-graph. As $B$ is quasiconvex relative to $\mathbb P_G$, there is a $B$-cocompact and quasiconvex subgraph $L\subset K$. Note that $L$ is a $(B; \mathbb{P}_B)$-graph. Let $A\leq B$.

If  $A$ is quasiconvex in $B$ relative to $\mathbb P_B$, there is an $A$-cocompact quasiconvex subgraph $M\subset L$. Since the composition $L_A\rightarrow L_B\rightarrow K$ is a quasi-isometric embedding, $A$ is quasiconvex relative to $\mathbb P_G$.
Conversely, if $A$ is quasiconvex in $G$ relative to $\mathbb P_G$, then there is an $A$-cocompact quasiconvex subgraph $M\subset K$. Let $L'=L\cup BM$ and note that $L'$ is $B$-cocompact and hence also quasiconvex,
and thus $L'$ also serves as a fine hyperbolic graph for $B$.
Now $M\subset L'$ is quasiconvex since $M\subset L$ is quasiconvex so $A$ is relatively quasiconvex in $B$.
\end{proof}

\begin{rem}\label{rem:G_v automatically quasiconvex}
One consequence of Theorem~\ref{first} and its various Corollaries,
is that when $G$ splits as a graph of relatively hyperbolic groups with parabolic subgroups,
then each of the vertex groups is quasiconvex relative to the peripheral structure of $G$.
(For Theorem~\ref{first} this is $\mathbb Q$, and for Corollary~\ref{hypspli} this is $\mathbb P  - \{\text{repeats}\}$.)
Indeed,  $K_v$ is a $G_v$-cocompact quasiconvex subgraph in
the fine graph $K$ constructed in the proof.
\end{rem}

\begin{lem}\label{fg}
Let $G$ be a f.g. group that split as a finite graph of groups $\Gamma$.
If each edge group is f.g. then each vertex group is f.g.
\end{lem}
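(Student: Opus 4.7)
The statement is classical Bass--Serre theory; my plan is to induct on the number of edges of $\Gamma$, with the amalgamated product $G = A *_C B$ and the HNN extension $G = A *_C$ as base cases.

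For the AFP base case, fix a finite generating set $S$ of $G$ and a finite generating set $C_0$ of $C$. Each $s \in S$ admits a normal form alternating between $A$ and $B$; collecting the syllables gives finite sets $A_0 \subset A$ and $B_0 \subset B$. I claim $A = \langle A_0 \cup C_0 \rangle$. Given $a \in A$, write $a$ as a word in $S^{\pm 1}$, concatenate the associated normal forms, and reduce: merge adjacent same-side syllables, and transfer across any syllable lying in the image of $C$. Merges stay in $\langle A_0 \rangle$ or $\langle B_0 \rangle$; transfers are controlled by $C = \langle C_0 \rangle$. So on the $A$-side we always remain in $\langle A_0 \cup C_0 \rangle$. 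The AFP normal form theorem forces the reduction of a word representing $a \in A$ to terminate in a single $A$-syllable, which therefore lies in $\langle A_0 \cup C_0 \rangle$. Symmetrically $B$ is f.g.

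The HNN base case $G = A *_C$ is completely parallel, with Britton's lemma replacing the AFP normal form: an analogous analysis of words $a_0 t^{\epsilon_1} a_1 \cdots t^{\epsilon_n} a_n$ yields a finite $A_0 \subset A$ and the same reduction argument gives $A = \langle A_0 \cup C_0 \rangle$.

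For the inductive step on the number of edges of $\Gamma$, I distinguish by whether $\Gamma$ has a bridge. If $e$ is a bridge, deleting it splits $\Gamma$ into two components $\Gamma_1, \Gamma_2$ with $G = \pi_1\Gamma_1 *_{G_e} \pi_1\Gamma_2$; the AFP base case gives that $\pi_1\Gamma_i$ are f.g., and each $\Gamma_i$ is a smaller graph of groups with f.g.\ edge groups, so induction completes the argument. If $\Gamma$ has no bridge, delete any edge $e$ to produce a connected $\Gamma'$ with $G = \pi_1\Gamma' *_{G_e}$; the HNN case gives $\pi_1\Gamma'$ f.g.\ and induction applies. The main obstacle is exclusively the bookkeeping of the normal form reduction in the base cases --- verifying that intermediate $A$-syllables never escape $\langle A_0 \cup C_0 \rangle$ --- for which the standard proofs of Britton's lemma and of the AFP normal form theorem suffice.
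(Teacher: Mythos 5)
Your proof is correct, but it runs along a different track than the paper's. You reduce to the two one-edge cases by induction on the edges of $\Gamma$ (splitting off an AFP over a bridge or an HNN over a non-separating edge), and in those base cases you run the classical syllable bookkeeping: take normal forms of a finite generating set of $G$, collect the vertex-group syllables into $A_0$, and use the AFP normal form theorem, respectively Britton's lemma, to see that the reduction of a word representing $a\in A$ collapses to a single $A$-syllable lying in $\langle A_0\cup C_0\rangle$. The paper instead treats the whole finite graph of groups in one step: it writes the generators $g_i$ in normal form with respect to the graph-of-groups presentation, takes a disc diagram $D$ in the associated $2$-complex with boundary $a^{-1}a_1t_1^{\epsilon_1}\cdots t_m^{\epsilon_m}b_k$, and reads off from the region of $D$ along $a$ that $G_v$ is generated by the edge groups attached to $v$ together with the finitely many vertex-group syllables of the $g_i$. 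So both arguments are at bottom the same syllable/normal-form computation; yours is more elementary and self-contained (no diagrams, only Britton's lemma and the amalgam normal form theorem) at the cost of the edge induction, while the paper's diagrammatic version is shorter, avoids the induction, and produces the explicit generating set $\{G_e\}_{e\ni v}\cup\{\text{syllables of the }g_i\}$ for every vertex group simultaneously. One small point to tidy in your HNN base case: the edge group has two boundary monomorphisms, so the transfers (pinches $t^{-1}ct\mapsto\phi(c)$ and $t\phi(c)t^{-1}\mapsto c$) require generators of both associated subgroups, i.e. $A=\langle A_0\cup C_0\cup\phi(C_0)\rangle$; since both are finite sets this does not affect the conclusion.
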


\begin{proof}
Let $G=\langle g_1,\dots, g_n \rangle$. We regard $G$ as $\pi_1$ of a $2$-complex corresponding to $\Gamma$.
We show that  each vertex group  $G_v$ equals
  $\langle \{G_e\}_{\text{$e$ attached  to~$v$}} \cup \{ g\in G_v :  g~\text{in normal form of some}~g_i \}\rangle$.
Let $a\in G_v$ and consider an expression of $a$ as a product of normal forms
 of the $g_i^{\pm1}$. Then $a$ equals some product $a_1{t_1}^{\epsilon_1}b_1{t_2}^{\epsilon_2}a_2\cdots a_n{t_m}^{\epsilon_m}b_k$. There is a disc diagram $D$ whose boundary path is $a^{-1}a_1{t_1}^{\epsilon_1}b_1{t_2}^{\epsilon_2}a_2\cdots a_n{t_m}^{\epsilon_m}b_k$. See Figure~\ref{fig:DiscDiagram}. The region of $D$ that lies along $a$ shows that $a$ equals
the product of elements in edge groups adjacent to $G_v$, together with elements of $G_v$ that lie in the normal forms of $g_1,\dots, g_n$.
\end{proof}

\begin{figure}\centering
\includegraphics[width=.25\textwidth]{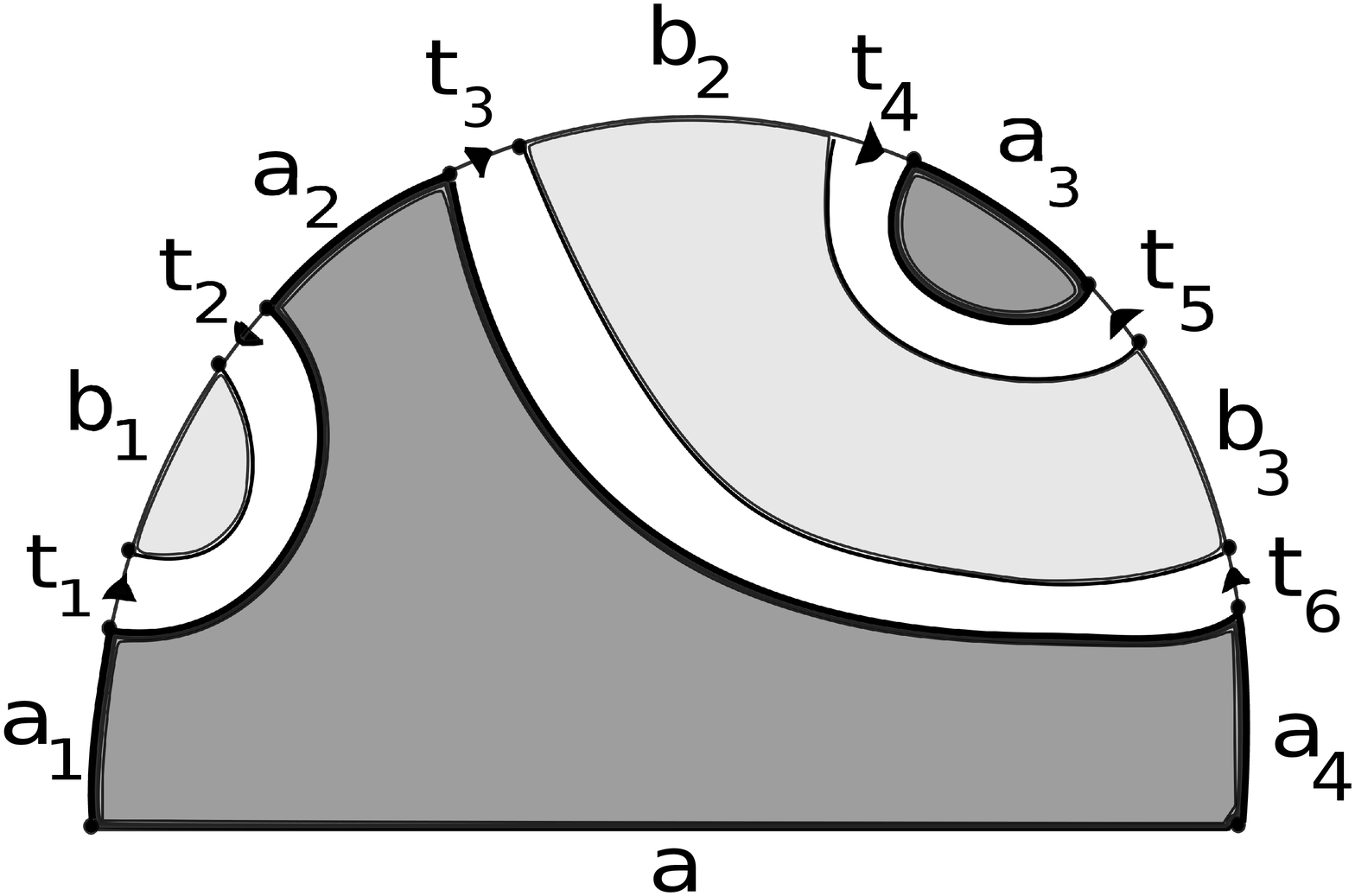}
\caption{
 \label{fig:DiscDiagram}}
\end{figure}

\begin{thm}[Quasiconvexity of a Subgroup in Parabolic Splitting]\label{qc first}
Let $G$ split as a finite graph $\Gamma$ of relatively hyperbolic groups such that each edge group is parabolic in its vertex groups. (Note that $G$ is hyperbolic relative to $\mathbb{Q} = \{Q_1,\dots,Q_j\}$
 by Theorem ~\ref{first}.) Let $H\leq G$ be tamely generated. 
Then $H$ is quasiconvex relative to $\mathbb Q$. Moreover if each $H_v$ in the Bass-Serre tree $T$ is finitely generated then $H$ is finitely generated.
\end{thm}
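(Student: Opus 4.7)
The plan is to build, in parallel with the construction of the $(G;\mathbb Q)$-graph $\bar K$ in the proof of Theorem~\ref{first}, an $H$-cocompact quasiconvex subgraph $\bar L\subset \bar K$ by first assembling a subgraph $L\subset K$ of the underlying tree-of-spaces. Since $H$ is tamely generated, the induced graph of groups $\Gamma_H$ contains a finite subgraph of groups $\Gamma'_H$ that is $\pi_1$-isomorphic to $\Gamma_H$ and each of whose vertex groups is relatively quasiconvex in the corresponding vertex group of $G$. Lift $\Gamma'_H$ to an $H$-invariant subtree $T'\subset T$ of the Bass--Serre tree; then $H\backslash T'=\Gamma'_H$ is finite, and for each vertex $v$ of $T'$ the stabilizer $H_v$ is relatively quasiconvex in $G_v$.

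For each $H$-orbit representative $v\in T'^0$, choose an $H_v$-cocompact quasiconvex subgraph $L_v\subset K_v$. Since only finitely many $H_v$-orbits of edges of $T'$ are incident to $v$, we may enlarge $L_v$, while retaining $H_v$-cocompactness and quasiconvexity, so that $L_v$ contains every chosen vertex at which such an edge space $K_e$ is attached. Extending $H$-equivariantly and taking unions, set
\[L=\Bigl(\bigcup_{v\in T'^0}L_v\Bigr)\;\cup\;\Bigl(\bigcup_{e\in T'^1}K_e\Bigr)\;\subset\;K.\]
Connectedness of $L$ follows from connectedness of $T'$ and each $L_v$ together with the attaching conditions; $H$-cocompactness follows from finiteness of $H\backslash T'$ combined with $H_v$-cocompactness of each $L_v$.

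The crucial step is to verify that $L\hookrightarrow K$ is a quasi-isometric embedding. Let $\gamma$ be a geodesic in $K$ between vertices $p\in L_u$ and $q\in L_w$. Because $K$ is a tree of spaces over $T$ whose edge spaces are single edges, $\gamma$ projects to a path in $T$ that traverses the unique $T$-geodesic $u=v_0,v_1,\dots,v_k=w$; this path lies in $T'$ since $T'$ is a subtree. The portion of $\gamma$ inside each $K_{v_i}$ is itself a geodesic of $K_{v_i}$ (a shortening there would shorten $\gamma$) connecting two chosen vertices of the enlarged $L_{v_i}$, and therefore lies within uniform Hausdorff distance of $L_{v_i}$ by the quasi-isometric embedding $L_{v_i}\hookrightarrow K_{v_i}$. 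Replacing each sub-arc by a nearby $L_{v_i}$-path and reattaching the intervening single edges $K_{e_i}\subset L$ produces an $L$-path from $p$ to $q$ of length comparable to $|\gamma|$. Passing to the quotient $K\to\bar K$ yields $\bar L\subset \bar K$; since the quotient is $H$-equivariant and the same tree-of-spaces analysis applies to $\bar K$ (whose vertex spaces are exactly the $K_v$), $\bar L$ is also $H$-cocompact and quasi-isometrically embedded in $\bar K$, establishing that $H$ is quasiconvex relative to $\mathbb Q$.

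For the moreover clause, if every vertex stabilizer $H_v$ in $T$ is finitely generated, then in particular the vertex groups of the finite graph of groups $\Gamma'_H$ are all finitely generated, whence $H=\pi_1\Gamma'_H$ is generated by these vertex-group generators together with the finitely many stable letters of $\Gamma'_H$. The main obstacle I expect is the tree-of-spaces decomposition of an arbitrary geodesic in $K$: the single-edge nature of the edge spaces ensures that each tree-edge crossing is unique, and fineness and hyperbolicity of $K$ must be invoked to guarantee that the sub-arcs inside each vertex space are themselves geodesics of that space.
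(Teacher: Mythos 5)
Your proposal is correct and takes essentially the same approach as the paper's proof: pass to an $H$-cocompact subtree of $T$ provided by tame generation, include the corresponding edges of $K$, enlarge $H_\nu$-cocompact quasiconvex subgraphs of the vertex spaces to contain the finitely many orbits of attaching vertices, take the $H$-translates' union, and push down to $\bar K$; the finite-generation clause is handled the same way. (One small remark: the fact that the sub-arcs of a $K$-geodesic inside a vertex space are geodesics there comes from the tree-of-spaces structure making each $K_\nu$ convex in $K$, not from fineness or hyperbolicity, and the paper is no more detailed than you are about transferring quasiconvexity from $K$ to $\bar K$.)
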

\begin{proof}
Since there are finitely many orbits of vertices whose stabilizers are finitely generated, $H$ is finitely generated.
For each $u\in \Gamma^0$, let $G_u$ be hyperbolic relative to $\mathbb P_u$ and let $K_u$ be a $(G_{u}; \mathbb P_{u})$-graph. Let $K$ be the $(G;\mathbb Q)$-graph constructed in the proof of Theorem~\ref{first} and let $\bar K$ be its quotient.
We will construct an $H$-cocompact quasiconvex, connected subgraph $\bar L$ of $\bar K$.

Let $T_H$ be the minimal $H$-invariant subgraph of $T$. Recall that each edge of $T$ (and hence $T_H$) corresponds to an edge of $K$. Let $F_H$ denote the subgraph of $K$ that is the union of all edges correspond to edges of $T_H$. Let $\{\nu_1,\dots,\nu_n\}$ be a representatives of $H$-orbits of vertices of $T_H$. For each $i$, let $L_i\hookrightarrow K_{\nu_i}$ be a $(H\cap G_{\nu_i}^{g_{i}})$-cocompact quasiconvex subgraph such that $L_i$ contains $F_H\cap K_{\nu_i}$. (There are finitely many $(H\cap G_{\nu_i}^{g_{i}})$-orbits of such endpoints of edges in $K_{\nu_i}$.) Let $L=F_H\cup \bigcup_{i=1}^{n} HL_i$ and let $\bar L$ be the image of $L$ under $K\rightarrow \bar K$. Observe that $L$ is quasiconvex in $K$ since $K$ is a ``tree union'' and each such $L_i$ of $L$ is quasiconvex in $K_{\nu_i}$. And likewise, $\bar L$ is quasiconvex in $\bar K$.
\begin{com}WE SHOULD ADD FIGURE\end{com}
%
\end{proof}

\begin{cor}[Characterizing Quasiconvexity in Maximal Parabolic Splitting]\label{qc in parab}
Let $G$ split as a finite graph of countable groups. For each $\nu$, let $G_\nu$ be hyperbolic relative to $\mathbb P_\nu$ and let the collection $\{G_e: e \textrm{ is attached to } \nu \}$ be a collection of maximal parabolic subgroups of $G_\nu$. $($Note that $G$ is hyperbolic relative to $\mathbb P=\bigcup_{\nu} \mathbb P_{\nu}-\{\text{repeats}\}$ by Corollary~\ref{hypspli}.$)$ Let $T$ be the Bass-Serre tree and let $H$ be a subgroup of $G$. The following are equivalent:
\begin{enumerate}
\item $H$ is tamely generated and each $H_v$ in the Bass-Serre tree $T$ is f.g.
\item $H$ is f.g. and quasiconvex relative to $\mathbb P$.
\end{enumerate}
\end{cor}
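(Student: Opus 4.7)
Both directions use the construction from Theorem~\ref{first}: the fine graph $\bar K$ and its blow-up $K$. A key observation is that under the maximal parabolic hypothesis of Corollary~\ref{hypspli}, each parabolic tree is stabilized by a conjugate of a single $P\in\bigcup_\nu \mathbb P_\nu$, because when $G_e$ is maximal parabolic on both sides the two vertices of $F$ attached to each edge have the same stabilizer in $G$. Thus $\mathbb Q$ of Theorem~\ref{first} is equivalent (up to conjugacy and omitting repeats) to $\mathbb P$, and relative quasiconvexity with respect to one coincides with relative quasiconvexity with respect to the other. For $(1)\Rightarrow(2)$, I apply Theorem~\ref{qc first} to the tamely generated $H$ to obtain quasiconvexity relative to $\mathbb Q$ and hence relative to $\mathbb P$; the moreover clause of Theorem~\ref{qc first}, combined with the f.g.\ hypothesis on each $H_v$, yields that $H$ itself is f.g.

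For $(2)\Rightarrow(1)$, I first show each $H_v=H\cap G_v$ is quasiconvex relative to $\mathbb P_v$ in $G_v$: by Remark~\ref{rem:G_v automatically quasiconvex} the subgroup $G_v$ is relatively quasiconvex in $G$; the intersection property from Remark~\ref{fg for lqc} gives that $H_v$ is relatively quasiconvex in $G$; and Lemma~\ref{associative} promotes this to quasiconvexity in $G_v$. Next, choose an $H$-cocompact quasiconvex $\bar L\subset\bar K$ and lift to $L\subset K$; the vertices of $\bar L$ arising from contracted edge spaces correspond to edges of the Bass-Serre tree $T$, so the $H$-cocompactness of $\bar L$ yields finitely many $H$-orbits of such edges and hence a finite sub-graph-of-groups $\Gamma'_H\subseteq\Gamma_H$ with $\pi_1(\Gamma'_H)=H$. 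Each edge group of $\Gamma'_H$ has the form $H\cap G_e^g$ with $G_e^g$ conjugate into $\mathbb P$; since $H$ is f.g.\ and relatively quasiconvex, its induced peripheral subgroups are f.g., so each $H\cap G_e^g$ is f.g. Applying Lemma~\ref{fg} to $\Gamma'_H$ shows that each of its vertex groups is f.g.; running the same local argument at an arbitrary $v\in T$ shows every $H_v$ is f.g. Since the vertex groups of $\Gamma'_H$ are relatively quasiconvex in the corresponding $G_v$, $H$ is tamely generated.

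The main obstacle is the claim that the induced peripheral subgroups of a f.g.\ relatively quasiconvex subgroup are themselves f.g.; equivalently, that each $H\cap G_e^g$ is f.g. This is standard in the theory of relatively quasiconvex subgroups, and in the fine-graph framework used here it can be verified from the cocompact action of $H$ on $\bar L$ with finite edge stabilizers, via a vertex-link/vertex-stabilizer argument. A secondary subtlety is ensuring that $\Gamma'_H$ is actually \emph{finite} as a subgraph of the possibly infinite $\Gamma_H$; this again follows from the $H$-cocompactness of $\bar L$ combined with the f.g.\ of $H$.
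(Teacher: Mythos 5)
Your argument is essentially the paper's: direction (1)$\Rightarrow$(2) is Theorem~\ref{first} plus Theorem~\ref{qc first} (the identification of $\mathbb Q$ with $\mathbb P$ being exactly the content of Corollary~\ref{hypspli}, which the statement already invokes), and for (2)$\Rightarrow$(1) you prove relative quasiconvexity of each $H_v$ by the same chain Remark~\ref{rem:G_v automatically quasiconvex} + intersection (Remark~\ref{fg for lqc}) + Lemma~\ref{associative}, and finite generation of the vertex groups via Lemma~\ref{fg}. The one step you leave open --- finite generation of the edge groups $H\cap G_e^g$ --- is exactly where the paper differs in execution: it observes that each such infinite intersection is a \emph{maximal parabolic} subgroup of $H$ in its induced peripheral structure (since $G_e^g$ is conjugate into $\mathbb P$ and $H$ is relatively quasiconvex), and then cites Osin's theorem that maximal parabolic subgroups of a finitely generated relatively hyperbolic group are finitely generated. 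Be aware that your proposed substitute --- a vertex-link/vertex-stabilizer argument from the cocompact action of $H$ on $\bar L$ with finite edge stabilizers --- does not suffice on its own: vertex stabilizers of cocompact actions on fine graphs need not be finitely generated in general (peripheral subgroups of countable relatively hyperbolic groups can fail to be f.g.), so the finite generation of $H$ is essential and the fact really is Osin's theorem, not a local link computation.

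A second, minor point: your derivation of the finiteness of $\Gamma'_H$ from the $H$-cocompactness of $\bar L$ is shaky as stated, because the preimage in $K$ of a single vertex of $\bar K$ is an entire parabolic tree, which may contain infinitely many $H$-orbits of edges of $T$; cocompactness of $\bar L$ alone therefore does not bound the $H$-orbits of edges of $T_H$. The paper avoids this detour entirely: since $H$ is f.g., the minimal $H$-invariant subtree $T_H$ is $H$-cocompact, which immediately gives the finite induced graph of groups $\Gamma_H$ to which Lemma~\ref{fg} is applied. With that substitution, and the Osin citation above, your argument coincides with the paper's proof.
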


\begin{proof}
{\bf {(1 $\Rightarrow$ 2)}}: Follows from Theorem~\ref{first} and Theorem~\ref{qc first}.

{\bf {(2 $\Rightarrow$ 1)}}: Since $H$ is f.g., the minimal $H$-subtree $T_H$ is $H$-cocompact, and so $H$ splits as a finite graph of groups $\Gamma_H$. Since $H$ is quasiconvex in $\mathbb P$, it is hyperbolic relative to intersections with conjugates of $\mathbb P$. In particular, the infinite edge groups in the induced splitting of $H$ are maximal parabolic, and are thus f.g.
since the maximal parabolic subgroups of a f.g. relatively hyperbolic group are f.g. \cite{OsinBook06}.
 Each vertex group of $\Gamma_H$ is f.g. by Lemma~\ref{fg}.

By Remark~\ref{rem:G_v automatically quasiconvex}, each vertex group of $G$ is quasiconvex relative to $\mathbb P$,
and hence each $G_\nu$ is relatively quasiconvex by Remark~\ref{fg for lqc} since it is a conjugate of a vertex group.
Thus  $H_\nu= H\cap G_\nu$  is quasiconvex relative to $\mathbb P$ by Remark~\ref{fg for lqc}.
Finally, $H_\nu$ is quasiconvex in $G_\nu$ by Lemma~\ref{associative}.
\end{proof}

\section{Local Relative Quasiconvexity}\label{sec:lqc}


A relatively hyperbolic group $G$ is \emph{locally relatively quasiconvex} if each f.g. subgroup of $G$ is relatively quasiconvex.
The focus of this section is the following criterion for showing that the combination of locally relatively quasiconvex groups
is again locally relatively quasiconvex.\begin{com}we should maybe say we mean finite combination\end{com}

Recall that $N$ is \emph{Noetherian} if each subgroup of $N$ is f.g. We now  give a criterion for local quasiconvexity
 of a  group that splits along parabolic subgroups.

\begin{thm}[A Criterion for Locally Relatively Quasiconvexity]\label{intersect}
\begin{enumerate}
\item Let $G_1$ and $G_2$ be locally relatively quasiconvex relative to $\mathbb P_1$ and $\mathbb P_2$. Let $G=G_1 \ast_{P_1={P_2'}} G_2$ where each $P_i\in \mathbb P_i$ \begin{com}this is not very well stated\end{com}and $P_1$ is identified with the subgroup ${P_2}'$ of $P_2$. Suppose $P_1$ is Noetherian. Then $G$ is locally quasiconvex relative to $\mathbb P_1\cup \mathbb P_2-\{P_1\}$.
\item  Let $G_1$ be a locally relatively quasiconvex relative to $\mathbb P$. Let $P_1\in \mathbb P$ be isomorphic to a subgroup ${P_2}'$ of a maximal parabolic subgroup $P_2$ not conjugate to $P_1$. Let $G = G_1*_{{P_1}^t={P_2}'}$. Suppose $P_1$ is Noetherian. Then $G$ is
locally quasiconvex relative to $\mathbb P-\{P_1\}$.
\item Let $G_1$ be a locally quasiconvex relative to $\mathbb P$. Let $P$ be a maximal parabolic
subgroup of $G_1$, isomorphic to ${P}'\leq P$. Let $G = G_1*_{{P}^t={P}'}$ and suppose $P$ is Noetherian. Then $G$ is
also locally quasiconvex relative to $\mathbb P\cup \langle P, t \rangle-\{P\}$.
\end{enumerate}
\end{thm}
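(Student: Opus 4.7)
The plan is to treat all three cases uniformly by appealing to Theorem~\ref{qc first}. Given a finitely generated subgroup $H\leq G$, I will verify that $H$ is tamely generated with respect to the displayed splitting of $G$; Theorem~\ref{qc first} then delivers quasiconvexity of $H$ relative to $\mathbb{Q}$, and Corollary~\ref{mlemma} identifies $\mathbb{Q}$ with the peripheral structure stated in each case.

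First I would pass to the minimal $H$-invariant subtree $T_H$ of the Bass-Serre tree $T$ of the splitting of $G$. Since $H$ is finitely generated, $T_H$ is $H$-cocompact, yielding a finite induced graph of groups $\Gamma'_H = T_H/H$ with $\pi_1(\Gamma'_H)\cong H$. Each edge group of $\Gamma'_H$ has the form $H_e = H\cap G_e^g$, where $G_e$ is a conjugate of the identified parabolic: $P_1$ in cases (1) and (2), and $P'\leq P$ in case (3). By hypothesis this edge group is Noetherian (or sits inside one), so each such $H_e$ is finitely generated. Applying Lemma~\ref{fg} to $H$ with the splitting $\Gamma'_H$ then forces every vertex group $H_v = H\cap G_v^g$ to be finitely generated. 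Because each $G_v$ is one of the input groups $G_1$ or $G_2$, which is locally relatively quasiconvex by hypothesis, the finitely generated subgroup $H_v$ is relatively quasiconvex in $G_v^g$. This exhibits $\Gamma'_H$ as a finite graph of groups with relatively quasiconvex vertex groups that is $\pi_1$-isomorphic to the induced graph of groups, witnessing that $H$ is tamely generated.

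Theorem~\ref{qc first} now gives that $H$ is quasiconvex relative to $\mathbb{Q}$, and Corollary~\ref{mlemma}(1)--(3) identifies $\mathbb{Q}$ in each case as $\mathbb{P}_1\cup\mathbb{P}_2-\{P_1\}$, $\mathbb{P}-\{P_1\}$, or $\mathbb{P}\cup\{\langle P,t\rangle\}-\{P\}$ respectively, which is the stated conclusion. The main obstacle I anticipate is precisely the Noetherian step: without the assumption that the identified parabolic is Noetherian, the induced edge groups of the $H$-splitting need not be finitely generated, and then Lemma~\ref{fg} could not be invoked to produce finitely generated vertex groups. A secondary subtlety is case (3), where the edge groups $P^t, P'$ are not maximal parabolic and the relevant parabolic tree is a full copy of the Bass-Serre tree of $\langle P,t\rangle\cong P\ast_{P^t=P'}$; this case falls outside the scope of Corollary~\ref{qc in parab}, but the more general Theorem~\ref{qc first} still applies and handles it uniformly with the other two cases.
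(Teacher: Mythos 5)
Your proposal is correct and follows essentially the same route as the paper: pass to the minimal $H$-subtree, use Noetherianness of the edge groups to get finitely generated edge groups, apply Lemma~\ref{fg} and local relative quasiconvexity of the vertex groups to verify tame generation, then conclude via Theorem~\ref{qc first} with Corollary~\ref{mlemma} supplying the peripheral structure. Your added remark that case (3) needs the general Theorem~\ref{qc first} rather than Corollary~\ref{qc in parab} (since the edge group is not maximal parabolic) is a correct clarification of the paper's terse ``(2) and (3) are similar.''
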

\begin{proof}
(1): By Corollary~\ref{mlemma}, $G$ is hyperbolic relative to $\mathbb P=\mathbb P_1\cup \mathbb P_2-\{P_1\}$. Let $H$ be a finitely generated subgroup of $G$. We show that $H$ is quasiconvex relative to $\mathbb P$. Let $T$ be the Bass-Serre tree of $G$. Since $H$ is f.g., the minimal $H$-subtree $T_H$ is $H$-cocompact, and so $H$ splits as a finite graph of groups $\Gamma_H$.
Moreover, the edge groups of this splitting are f.g. since the edge groups of $G$ are Noetherian by hypothesis.
Thus each vertex group of $\Gamma_H$ is f.g. by Lemma~\ref{fg}.
 Since $G_1$ and $G_2$ are locally relatively quasiconvex, each vertex group of $T_H$ is relatively quasiconvex in its ``image vertex group'' under the map $T_H\rightarrow T$. Now by Theorem~\ref{qc first}, $H$ is quasiconvex relative to $\mathbb P$. The proof of (2) and (3) are similar.
\end{proof}

\begin{defn}[Almost Malnormal]
A subgroup $H$ is \emph{malnormal} in $G$ if $H\cap H^g=\{1\}$ for $g\notin H$,
 and similarly $H$ is \emph{almost malnormal} if this intersection $H\cap H^g$ is always finite.
 Likewise, a collection of subgroups $\{H_i\}$ is \emph{almost malnormal}
 if $H_i^g\cap H_j^h$ is finite unless $i=j$ and $gh^{-1}\in H_i$.
\end{defn}

\begin{cor}\label{cor:lqc noetherian combination}
Let $G$ split as a finite graph of groups. Suppose
\begin{itemize}
\item [a)] Each $G_{\nu}$ is locally relatively quasiconvex;
\item[b)] Each $G_e$ is Noetherian and maximal parabolic in its vertex groups;
\item[c)] $\{G_e: e \textrm{  is attached to } \nu\}$ is almost malnormal in $G_{\nu}$, for any vertex $\nu$.
\end{itemize}
Then $G$ is locally relatively quasiconvex relative to $\mathbb P$, see Corollary~\ref{hypspli}.
\end{cor}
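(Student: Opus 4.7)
My plan is to combine the relative hyperbolicity output of Corollary~\ref{hypspli} with the quasiconvexity criterion of Corollary~\ref{qc in parab} and the Noetherian hypothesis on edge groups. First observe that hypotheses (a) and (b) exactly fit Corollary~\ref{hypspli}, so $G$ is hyperbolic relative to $\mathbb{P}=\bigcup_\nu \mathbb{P}_\nu-\{\text{repeats}\}$. To prove local relative quasiconvexity, let $H\leq G$ be an arbitrary finitely generated subgroup; I aim to verify condition~(1) of Corollary~\ref{qc in parab}, which will immediately yield that $H$ is quasiconvex relative to $\mathbb{P}$.

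Let $T$ be the Bass-Serre tree of $G$ and let $T_H$ be its minimal $H$-invariant subtree. Since $H$ is finitely generated, $T_H$ is $H$-cocompact, so the induced splitting $\Gamma_H=T_H/H$ is a \emph{finite} graph of groups. The next step is to verify that the vertex groups of $\Gamma_H$ are finitely generated. Each edge group of $\Gamma_H$ has the form $H\cap G_e^g$ for some edge $e$ of $\Gamma$ and some $g\in G$; since $G_e$ is Noetherian by hypothesis~(b), so is $G_e^g$, hence so is its subgroup $H\cap G_e^g$, which is therefore finitely generated. Lemma~\ref{fg} then gives that every vertex group of $\Gamma_H$ is finitely generated.

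Now each vertex group of $\Gamma_H$ is a finitely generated subgroup $H\cap G_\nu^g$ of some conjugate $G_\nu^g$, and by hypothesis~(a) the group $G_\nu^g$ is locally relatively quasiconvex, so $H\cap G_\nu^g$ is quasiconvex in $G_\nu^g$ relative to $\mathbb{P}_\nu^g$. Thus $\Gamma_H$ itself serves as the required $\Gamma'_H$, showing that $H$ is tamely generated, and condition~(1) of Corollary~\ref{qc in parab} is satisfied (the $H_v$ are finitely generated by the argument above). Applying Corollary~\ref{qc in parab} then gives that $H$ is relatively quasiconvex in $G$, completing the proof.

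The main obstacle, such as it is, is purely bookkeeping: it lies in ensuring that the Noetherian hypothesis on edge groups is strong enough to trigger Lemma~\ref{fg} for the \emph{induced} splitting of $H$, rather than for $G$ itself. Once this is observed, everything else is a direct application of previously established results; the almost malnormality hypothesis~(c) plays no active role here and is inherited from the parallel hypothesis used to invoke the quasiconvexity machinery in the broader framework of Main Theorem~C.
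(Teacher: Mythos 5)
Your proof is correct and is essentially the paper's intended argument: this corollary is left without a separate proof because it is the graph-of-groups version of the proof of Theorem~\ref{intersect}(1) — Corollary~\ref{hypspli} gives relative hyperbolicity, the Noetherian edge groups make the edge groups of $\Gamma_H$ finitely generated so Lemma~\ref{fg} applies to the induced splitting, local relative quasiconvexity of the $G_\nu$ then yields tame generation, and Theorem~\ref{qc first}/Corollary~\ref{qc in parab} concludes. One caveat: hypothesis (c) is not as idle as you suggest — since maximal parabolic subgroups are individually almost malnormal, given (b) condition (c) says exactly that distinct edges at a vertex carry non-conjugate edge groups, which is what keeps the parabolic trees $i$-pods and the resulting peripheral structure equal to $\mathbb P$ (without it one is in the situation of Corollary~\ref{mlemma}(3), e.g. $G_1*_{P^t=P}$, where $\langle P,t\rangle$ must replace $P$), so (c) is tacitly used through your appeals to Corollaries~\ref{hypspli} and~\ref{qc in parab}.
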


\subsection{Small-hierarchies and local quasiconvexity}\label{Noetherian}
The main result in this subsection is a consequence of Theorem~\ref{intersect}
that employs results of Yang \cite{Yang11} stated in Theorems~\ref{Yang}~and~\ref{hyp extend},
 and also depends on Lemma~\ref{QC vertex} which is independent of Section~\ref{sec:improvements using yang}. The reader may choose to read this subsection and refer ahead to those results, or return to this subsection
 after reading Section~\ref{sec:improvements using yang}.

\begin{defn}[Small-Hierarchy]\label{defn:small}
A group is \emph{small} if it has no rank~2 free subgroup.
Any small group has a {\em{length~0 small-hierarchy}}. $G$ has a {\em{length~$n$ small-hierarchy}} if $G\cong A*_CB$ or
$G\cong A*_{C^t=C'}$, where $A$ and $B$ have length $(n-1)$ small-hierarchies, and $C$ is small and f.g.
\begin{com} note that if $G$ has a length $n$ hierarchy, then it also has a length $n+m$ hierarchy for $m\geq 1$,
since we can repeatedly take phony amalgamations with trivial trivial groups $G*_1 1$.\end{com}
We say $G$ has a \emph{small-hierarchy} if it has a length~$n$ small-hierarchy for some $n$.

\begin{com}Let $\mathcal S$ be a class of small groups that is closed under isomorphism and subgroups.
Some typical such classes are: Abelian groups, Polycyclic groups, Finite groups, Noetherian groups, etc.\end{com}

We can define $\mathcal F$-hierarchy by replacing ``small'' by a class of groups $\mathcal F$ closed under subgroups and isomorphisms. For instance, when $\mathcal F$ is the class of finite groups, the class of groups with an $\mathcal F$-hierarchy
is precisely the class of virtually free groups.
\end{defn}

\begin{rem}\label{Tits Alternative}
The \emph{Tits alternative} for relatively hyperbolic groups states that every f.g. subgroup is either:
 elementary, parabolic, or contains a subgroup isomorphic to $F_2$.
The Tits alternative is proven for countable relatively hyperbolic groups in \cite[Thm~8.2.F]{Gromov87}.
A proof is given for convergence groups in \cite{Tukia94}.
It is shown in \cite{OsinBook06} that every cyclic subgroup $H$ of a f.g. relatively hyperbolic group $G$ is relatively quasiconvex.
\end{rem}

\begin{com}
Can a relatively hyperbolic group contain an infinite torsion subgroup that is not parabolic?
if so, then we would not have to assume f.g. edge groups in the definition of small-hierarchy.

Is the following theorem more general if we assume that the small-hierarchy has quasconvex vertex groups at each stage.
\end{com}
\begin{thm}\label{thm:Noetherian}
Let $G$ be f.g. and hyperbolic relative to $\mathbb P$ where each element of\ $\mathbb P$ is Noetherian.
Suppose $G$ has a small-hierarchy. Then $G$ is locally relatively quasiconvex.
\end{thm}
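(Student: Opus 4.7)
The proof proceeds by induction on the length $n$ of the small-hierarchy, with inductive hypothesis that any f.g.\ group hyperbolic relative to a collection of Noetherian subgroups and admitting a length $\leq n$ small-hierarchy is locally relatively quasiconvex. For the base case $n=0$, $G$ is small, so any f.g.\ subgroup $H\leq G$ is small and hence either elementary or parabolic by the Tits alternative (Remark~\ref{Tits Alternative}). Parabolic subgroups are relatively quasiconvex by Remark~\ref{fg for lqc}, while elementary subgroups are finite or virtually cyclic and thus relatively quasiconvex by the result of Osin noted in Remark~\ref{Tits Alternative}.

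For the inductive step write $G \cong A *_C B$, the HNN case $G \cong A*_{C^t=C'}$ being analogous, where $A$ and $B$ have small-hierarchies of length $\leq n-1$ and $C$ is small and f.g. The Tits alternative applied inside $G$ forces $C$ to be elementary or parabolic in $G$. When $C$ is elementary, Yang's extension theorems (Theorems~\ref{Yang}~and~\ref{hyp extend}) allow us to enlarge $\mathbb{P}$ to a Noetherian peripheral collection $\mathbb{P}'$ that adjoins the almost-malnormal elementary closure $E_G(C)$ as a new maximal parabolic subgroup; when $C$ is already parabolic we set $\mathbb{P}' = \mathbb{P}$. With respect to $\mathbb{P}'$, the edge group $C$ is parabolic in each vertex group. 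The combination theorems of Section~\ref{sec:rhg}, together with Remark~\ref{rem:G_v automatically quasiconvex}, then show that $A$ and $B$ are relatively quasiconvex in $(G,\mathbb{P}')$; hence by Remark~\ref{fg for lqc} each inherits a relatively hyperbolic structure with Noetherian peripheral subgroups, while Lemma~\ref{fg} ensures that $A$ and $B$ are f.g. The inductive hypothesis therefore applies, yielding that $A$ and $B$ are locally relatively quasiconvex.

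Once $C$ is arranged to be maximal parabolic on each side (invoking Yang again inside $A$ and inside $B$ individually if necessary), the amalgam $G=A*_C B$ satisfies the hypotheses of Corollary~\ref{cor:lqc noetherian combination}: the vertex groups are locally relatively quasiconvex, $C$ is Noetherian and maximal parabolic, and almost malnormality of the edge-group collection follows from the almost malnormality of the peripheral structures produced by Yang. Consequently $G$ is locally relatively quasiconvex with respect to $\mathbb{P}'$, and one final appeal to Yang's machinery (together with Lemma~\ref{QC vertex}) transfers this conclusion back to the original peripheral structure $\mathbb{P}$. The principal technical obstacle is coordinating the peripheral extensions at three different levels — inside $G$, inside $A$, and inside $B$ — so that $C$ (or its appropriate elementary closure) simultaneously becomes maximal parabolic at each level while the Noetherian and almost-malnormal hypotheses demanded by Yang's theorems and by Corollary~\ref{cor:lqc noetherian combination} are preserved throughout.
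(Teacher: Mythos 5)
Your overall skeleton (induction on the hierarchy length, Tits alternative trichotomy for the f.g.\ small edge group $C$, extending the peripheral structure by the elementary closure when $C$ is virtually cyclic non-parabolic, inducting on the vertex groups, then a combination criterion, then transferring back via Yang) matches the paper's strategy, but two steps do not go through as written. First, your derivation that $A$ and $B$ are relatively quasiconvex in $(G,\mathbb P')$ from ``the combination theorems of Section~\ref{sec:rhg} together with Remark~\ref{rem:G_v automatically quasiconvex}'' is circular: that remark (and Theorem~\ref{first}) presupposes that $A$ and $B$ already carry relatively hyperbolic structures in which $C$ is parabolic, and it yields quasiconvexity relative to the structure $\mathbb Q$ produced by the combination theorem, not relative to $\mathbb P'$; but you only obtain the relatively hyperbolic structures on $A$ and $B$ afterwards, as induced structures coming from their quasiconvexity. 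The correct tool here is Lemma~\ref{QC vertex} (quasiconvex edge groups imply quasiconvex vertex groups), which needs only that $C$ is relatively quasiconvex in $G$ (true in all three Tits-alternative cases) and that the edge and vertex groups are f.g.\ (Lemma~\ref{fg}); this is how the paper gets the induced Noetherian peripheral structures on $A,B$ before invoking the inductive hypothesis.

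The more serious gap is the claim that $C$ can be ``arranged to be maximal parabolic on each side (invoking Yang again inside $A$ and inside $B$ individually if necessary)'' so that Corollary~\ref{cor:lqc noetherian combination} applies. Yang's Theorem~\ref{hyp extend} only \emph{extends} peripheral structures to larger almost malnormal relatively quasiconvex subgroups; it cannot shrink a maximal parabolic so that a proper subgroup of it becomes peripheral. If $C$ is infinite parabolic and of infinite index in the maximal parabolics $D_a\leq A$ and $D_b\leq B$ (e.g.\ $C\cong\integers$ inside $D_a\cong D_b\cong\integers^2$), no peripheral structure on $A$ or $B$ makes $C$ maximal parabolic, so neither Corollary~\ref{cor:lqc noetherian combination} nor Theorem~\ref{intersect} applies to the given splitting. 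This is exactly the case where the paper changes the \emph{splitting} rather than the peripheral structure: it refines $G=A*_{C}B$ to $A*_{D_a}\bigl(D_a*_{C}D_b\bigr)*_{D_b}B$, observes that the middle vertex group is a single parabolic subgroup of $G$ (hence locally relatively quasiconvex), and applies Theorem~\ref{intersect} twice, each time with an edge group that is maximal parabolic on one side (which is all that Theorem~\ref{intersect} requires). The HNN case with infinite parabolic edge group likewise needs a nontrivial re-splitting (the subcases $t\in M_1$ and $t\notin M_1$), so it is not simply ``analogous'' as you assert. Without this refinement idea the infinite-parabolic, non-maximal case is not handled, and your induction does not close.
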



\begin{proof}
The proof is by induction on the length of the hierarchy.
Since edge groups are f.g., the Tits alternative shows that there are three cases according to whether the edge group is finite, virtually cyclic, or infinite parabolic, and we note that the edge group is relatively quasiconvex in each case.
These three cases are each divided into two subcases according to whether $G=A*_{C_1} B$ or $G=A*_{C_1^t=C_2}$.

Since $C_1$ and $G$ are f.g. the vertex groups are f.g. by Lemma~\ref{fg}.
Thus, since $C_1$ is relatively quasiconvex the vertex groups are relatively quasiconvex by Lemma~\ref{QC vertex}.

When $C_1$ is finite the conclusion follows in each subcase from Theorem~\ref{intersect}.

When $C_1$ is virtually cyclic but not parabolic, then $C_1$ lies in a unique maximal virtually cyclic subgroup $Z$ that is almost malnormal and relatively quasiconvex by \cite{Osin06}.\begin{com} He proved that for any hyperbolic $g$, there is an elementary $C_1$ such that $g \in C_1$\end{com}
Thus $G$ is hyperbolic relative to $\mathbb P' = \mathbb P \cup \{Z\}$ by Theorem~\ref{hyp extend}.

Observe that $C_1$ is maximal infinite cyclic on at least one side, since otherwise there would be a nontrivial splitting of $Z$ as an amalgamated free product over $C_1$. We equip the (relatively quasiconvex) vertex groups with their induced peripheral structures. Note that $C_1$ is maximal parabolic on at least one side and so $G$ is locally relatively quasiconvex relative to $\mathbb P'$ by Theorem~\ref{intersect}.
Finally, by Theorem~\ref{Yang}, any subgroup $H$ is quasiconvex relative to the original peripheral structure $\mathbb P$ since intersections between $H$ and conjugates of $Z$ are quasiconvex relative to $\mathbb P$.

When $C_1$ is infinite parabolic, we will first produce a new splitting before verifying local relative quasiconvexity.

When $G=A*_{C_1} B$.
Let $D_a, D_b$ be the maximal parabolic subgroups of $A,B$ containing $C_1$,
and refine the splitting to:
$$ A*_{D_a}  ( D_a*_{C_1} D_b) *_{D_b} B$$
The two outer splittings are along a  parabolic that is maximal on the outside vertex group.
The inner vertex group $D_a*_{C_1} D_b$ is a single parabolic subgroup of $G$. Indeed, as $C_1$ is infinite,
 $D_a \supset {C_1} \subset D_b$ must all lie in the same parabolic subgroup of $G$. It is obvious that $ D_a*_{C_1} D_b$ is locally relatively quasiconvex with respect to its
induced peripheral structure since it is itself parabolic in $G$. Consequently $( D_a*_{C_1} D_b) *_{D_b} B$ 
is locally relatively quasiconvex by Theorem~\ref{intersect}, therefore $G=A*_{D_a}  \big(( D_a*_{C_1} D_b) *_{D_b} B\big)$ is 
locally relatively quasiconvex by Theorem~\ref{intersect}.

When $G\cong A*_{{C_1}^t=C_2}$, let $M_i$ be the maximal parabolic subgroup of $G$ containing $C_i$.
 There are two subsubcases:

{\bf{\large[}$t \in M_1${\large ]}} Then $C_2 \leq M_1$ and
 \begin{com} Note that since $M_1$ cannot contain $F_2$, the group $M_1$ is an ascending HNN extension, and would be a semi-direct product in the Noetherian case.(need explanation) and\end{com}
 we revise the splitting to $G\cong A*_{D_1} M_1$ where $D_1 = M_1\cap A$.
And in this splitting the edge group is maximal parabolic at $D_1\subset A$, and $M_1$ is parabolic.

{\bf{\large[}$t \notin M_1${\large ]}}
\begin{com}Let $M_1=M$ and $M_2=M^t$ be the maximal parabolic subgroup of $G$ containing $C_2$.
Note that $\{M_1,M_2\}$ is almost malnormal since $t\not\in M_1$.\end{com}
Let $D_i$ denote the maximal parabolic subgroup of $A$ containing $C_i$.
Observe that $\{D_1,D_2\}$  is almost malnormal since $D_i=M_i\cap A$.
We revise the HNN extension to the following:
$$\bigg(D_1^t*_{C_1^t= C_2} A \bigg)*_{{D_1}^t=D_1}$$
where the conjugated copies of $D_1$ in the HNN extension
embed in the first  and second factor of the AFP.

In both cases, the local relative quasiconvexity of $G$ now holds by Theorem~\ref{intersect} as before.
 \end{proof}

\section{Relative Quasiconvexity in Graphs of Groups}\label{sec:improvements using yang}

Gersten \cite{Gersten96Subgroups} and then Bowditch \cite{Bowditch99RH} showed that a hyperbolic group $G$ is hyperbolic relative to an almost malnormal quasiconvex subgroup.  Generalizing work of Martinez-Pedroza \cite{MartinezPedroza08}, Yang introduced and characterized a class of parabolically extended structures for countable relatively hyperbolic groups
\cite{Yang11}. We use his results to generalize our previous results. The following was defined in \cite{Yang11} for countable groups.

\begin{defn}[Extended Peripheral Structure]\label{parabolic structure}
A {\em{peripheral structure}} consists of a finite collection  $\mathbb P$ of subgroups of a group $G$.
Each element $P\in \mathbb P$ is a {\em{peripheral subgroup}} of $G$.
The peripheral structure $\mathbb E = \{E_j\}_{j\in J}$  \emph{extends}
$\mathbb P = \{P_i\}_{i\in I}$ if for each $i \in I$, there exists $j \in J$ such that $P_i \subseteq E_j$.
For  $E\in \mathbb E$, we let $\mathbb P_{E} = \{P_i \, : \,  P_i\subseteq E, P_i\in \mathbb P, i\in I\}$.
\end{defn}



We will use the following result of Yang~\cite{Yang11}.

\begin{thm}[Hyperbolicity of Extended Peripheral Structure]\label{hyp extend}
Let $G$ be hyperbolic relative to $\mathbb P$ and let the peripheral structure $\mathbb E$ extend $\mathbb P$.
Then $G$ is hyperbolic relative to $\mathbb E$ if and only if the following hold:
\begin{enumerate}
\item \label{yang:2} $\mathbb E$ is almost malnormal;
\item \label{yang:1}  Each $E\in \mathbb E$ is quasiconvex in $G$ relative to $\mathbb P$.
\end{enumerate}
\end{thm}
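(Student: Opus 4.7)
The plan is to prove both directions using fine hyperbolic graphs in the style of the preceding constructions.

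For the direction assuming $G$ is hyperbolic relative to $\mathbb{E}$, almost malnormality of $\mathbb{E}$ follows from the standard observation that in any $(G;\mathbb{E})$-graph $K_{\mathbb{E}}$, two infinite peripheral vertex stabilizers that share infinitely many elements must stabilize the same vertex, because distinct vertices can only be joined by paths through finite edge stabilizers. For quasiconvexity of each $E\in\mathbb{E}$ relative to $\mathbb{P}$, I would first show that $E$ is itself hyperbolic relative to $\mathbb{P}_E$: each $P\in\mathbb{P}$ with $P\subseteq E$ is parabolic in $G$ and must sit inside the $E$-stabilized vertex of $K_{\mathbb{E}}$, from which an $(E;\mathbb{P}_E)$-graph can be extracted. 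Then I would build a $(G;\mathbb{P})$-graph $K_{\mathbb{P}}$ by $G$-equivariantly blowing up each $E$-orbit of vertices in $K_{\mathbb{E}}$ into a copy of an $(E;\mathbb{P}_E)$-graph glued along the incident edges; within $K_{\mathbb{P}}$, the blown-up image for $E$ is an $E$-cocompact connected quasi-isometrically embedded subgraph, giving the desired relative quasiconvexity.

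For the converse, assume (1) and (2), fix a $(G;\mathbb{P})$-graph $K$, and for each $E\in\mathbb{E}$ choose an $E$-cocompact connected quasiconvex subgraph $L_E\subseteq K$ provided by~(2). Form $\hat K$ by taking each translate $gL_E$ and collapsing it to a single cone vertex $v_{gE}$, absorbing in the process any peripheral vertex of $K$ whose stabilizer already lies inside $gEg^{-1}$. Then $\mathrm{Stab}(v_{gE})=gEg^{-1}$, so each element of $\mathbb{E}$ appears as the stabilizer of a vertex of $\hat K$. The remaining axioms of a $(G;\mathbb{E})$-graph are verified one at a time: cofiniteness of the action and connectedness are inherited from $K$; finite edge stabilizers hold because any surviving edge of $\hat K$ either had a finite stabilizer already in $K$, or runs between $v_{gE}$ and a vertex whose $K$-stabilizer meets $gEg^{-1}$ in a finite group (using (1) together with the original finite-edge-stabilizer property of $K$); hyperbolicity is standard because collapsing a $G$-invariant family of uniformly quasiconvex subgraphs in a fine hyperbolic graph yields a hyperbolic quotient (a Farb/Bowditch cone-off argument applied to $K$ modulo the $G$-translates of the $L_E$).

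The main obstacle is fineness of $\hat K$. The cone vertices $v_{gE}$ have unbounded valence, so a circuit of length $n$ through an edge incident to $v_{gE}$ corresponds to two distinct paths leaving the collapsed region and reconnecting somewhere else in $\hat K$. Almost malnormality of $\mathbb{E}$ is precisely the ingredient needed to control these: for any two distinct cosets $gE$ and $hE'$ (with the pair $(g,E)\ne(h,E')$ up to $E$-translation), the overlap $gL_E\cap hL_{E'}$ is $(gEg^{-1}\cap hE'h^{-1})$-invariant, hence acted on by a finite group, and one deduces that it contains only finitely many vertices up to this finite action, and thus finitely many vertices in total. Combining this with the fineness of $K$ bounds the number of length-$n$ circuits of $\hat K$ through any fixed edge, completing the verification.
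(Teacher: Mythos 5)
First, note that the paper contains no proof of this statement: it is imported verbatim as a theorem of Yang \cite{Yang11}, whose proof uses entirely different machinery, so your argument has to stand on its own. A fine-graph proof in the spirit you outline may well be possible and would fit this paper's viewpoint nicely, but as written the steps you flag as ``standard'' or settle with ``one deduces'' are exactly the substance of the theorem. In the ``if'' direction there are three concrete gaps. (i) You collapse each translate $gL_E$ to a point and assert that the stabilizer of the new vertex is $gEg^{-1}$; but distinct translates can intersect (controlling such overlaps is precisely what your fineness paragraph is about), and collapsing intersecting translates identifies their cone points, so the stabilizer of the resulting vertex is the setwise stabilizer of a whole union of translates, not visibly a conjugate of an element of $\mathbb E$. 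Since a $(G;\mathbb E)$-graph requires every infinite vertex stabilizer to be conjugate to an element of $\mathbb E$, this must be arranged, e.g.\ by adding one cone vertex per coset $gE$ joined to the vertices of $gL_E$ (then the stabilizer is $gEg^{-1}$ by construction); note also that the hyperbolicity result you invoke is about coning off, not collapsing. (ii) The finiteness of $gL_E\cap hL_{E'}$ does not follow from its being invariant under the finite group $gEg^{-1}\cap hE'h^{-1}$ --- infinite sets are routinely invariant under finite groups. The genuine difficulty is that $K$ has vertices with infinite (peripheral) stabilizers: if $z$ is such a vertex in the overlap, the natural counting argument only shows that $gEg^{-1}$ meets $hE'h^{-1}\cdot\operatorname{Stab}(z)$ in an infinite set, and excluding this uses almost malnormality together with the extension property in a multi-case argument you have not given. (iii) Even granting finite pairwise overlaps, fineness of $\hat K$ is not a formal consequence of fineness of $K$: a length-$n$ circuit through a fixed edge of $\hat K$ pulls back to a path in $K$ whose entry and exit vertices on each collapsed (infinite!) subgraph are unconstrained, and you must separately rule out infinitely many parallel edges between two cone vertices, which would already yield infinitely many circuits of length two. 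This bounded-penetration-type control is the heart of the matter and is only asserted.

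In the ``only if'' direction, the almost-malnormality half is essentially fine (though the correct reason is fineness: an infinite subgroup fixing two distinct vertices permutes the finitely many arcs of a given length between them, hence virtually fixes an edge, contradicting finite edge stabilizers). The quasiconvexity half, however, is assumed rather than proved: the vertex of $K_{\mathbb E}$ fixed by $E$ is a single point, so no $(E;\mathbb P_E)$-graph can be ``extracted'' from it --- that $E$ is hyperbolic relative to $\mathbb P_E$ is itself a nontrivial assertion here; and even granting it, you still owe the verification that the blown-up graph is fine, hyperbolic, and has the correct vertex stabilizers (i.e.\ is a genuine $(G;\mathbb P)$-graph), and that the embedded copy of the $(E;\mathbb P_E)$-graph is quasi-isometrically embedded, which requires controlling paths that leave and re-enter the copy. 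These unproved claims are precisely what Yang's theorem supplies, so the proposal, while pointing at the right ingredients (almost malnormality for fineness, quasiconvexity for hyperbolicity), does not yet constitute a proof.
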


\begin{defn}[Total]
Let $G$ be hyperbolic relative to $\mathbb P$. The subgroup $H$ of $G$ is {\em{total relative to $\mathbb P$}} if: either $H\cap P^g=P^g$ or $H\cap P^g$ is finite for each $P\in \mathbb P$ and $g\in G$.
\end{defn}

The following is proven in \cite{DrutuSapir2005}:
\begin{com}If a big space is a tree graded space of medium spaces each of which is a tree graded space of small spaces,
then the big space is a tree graded space of small spaces\end{com}
\begin{lem}\label{associative hyp}
If $G$ is f.g. and hyperbolic relative to $\mathbb P=\{P_1,\dots, P_n\}$ and
each $P_i$ is hyperbolic relative to $\mathbb H_i=\{H_{i1},\dots,H_{i{m_i}}\}$, then $G$ is hyperbolic relative to $\bigcup_{1\leq i\leq n} \mathbb H_{i}$.
\end{lem}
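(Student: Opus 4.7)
The natural plan, in the fine-graph spirit of this paper, is to refine a $(G;\mathbb P)$-graph into a $(G;\mathbb H)$-graph via a blow-up construction, where $\mathbb H := \bigcup_i \mathbb H_i$.

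First, fix a $(G;\mathbb P)$-graph $K$ and, for each $i$, a $(P_i;\mathbb H_i)$-graph $K_i$, together with a representative $v_i \in K$ satisfying $\mathrm{Stab}(v_i) = P_i$. Since $G$ acts cofinitely on the edges of $K$, there are only finitely many $P_i$-orbits of edges incident to $v_i$, each with a finite stabilizer $F \subseteq P_i$; by Technical Remark~\ref{tr} we may enrich the peripheral structure of each $K_i$ so that every such $F$ has a designated fixed vertex in $K_i$. Now construct $K'$ by removing every infinite-stabilizer vertex of $K$ and, equivariantly, gluing a translate of the appropriate $K_i$ in its place, reattaching each $K$-edge $e$ that was incident to the removed vertex $gv_i$ to the designated $\mathrm{Stab}(e)$-fixed vertex of $gK_i$.

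The action of $G$ on $K'$ is cofinite on edges with finite edge stabilizers, and the infinite vertex stabilizers of $K'$ are precisely the $G$-conjugates of the $H_{ij}$. Fineness of $K'$ follows from fineness of $K$ and of the $K_i$: the contraction $c\colon K' \to K$ that collapses each inserted $gK_i$ to $gv_i$ sends embedded cycles to closed walks, and an embedded cycle of length $\leq n$ through a fixed edge of $K'$ decomposes into boundedly many arcs inside individual blow-ups (finite in number by fineness of the $K_i$ together with an arc-length bound) joined by a closed walk in $K$ of length $\leq n$ through a fixed edge or region (finite by fineness of $K$).

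The main obstacle is hyperbolicity of $K'$. Note that $c$ is only Lipschitz, not a quasi-isometry, since a geodesic traversing a blow-up may consume many edges; so hyperbolicity does not pull back along $c$. The cleanest resolution is to invoke the Drutu--Sapir tree-graded characterization of relative hyperbolicity \cite{DrutuSapir2005}: the hypothesis on $\mathbb P$ makes $\mathrm{Cone}_\omega(G)$ tree-graded with pieces the ultralimits of $P_i$-cosets, the hypothesis on each $P_i$ makes every such piece tree-graded by ultralimits of $H_{ij}$-cosets, and tree-graded structures compose, since any simple geodesic triangle lies in some $P_i$-piece hence in some $H_{ij}$-piece, and any two distinct $H_{ij}$-pieces meet in at most a single point by either the inner or the outer tree-graded axiom. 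Thus $G$ is hyperbolic relative to $\mathbb H$, confirming that the explicit $K'$ above is indeed a $(G;\mathbb H)$-graph.
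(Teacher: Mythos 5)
Your proof is correct where it matters and is essentially the paper's route: the paper gives no argument of its own for this lemma, attributing it to Drutu--Sapir \cite{DrutuSapir2005}, and the intended justification is precisely what your final paragraph invokes --- the asymptotically tree-graded characterization of relative hyperbolicity together with the fact that a tree-graded structure whose pieces are themselves tree-graded yields a tree-graded structure on the smaller pieces. Your blow-up construction of $K'$ is superfluous scaffolding: its fineness argument is only sketched, its hyperbolicity is (as you admit) not established, and knowing that $G$ is hyperbolic relative to $\bigcup_i \mathbb H_i$ does not by itself ``confirm'' that this particular $K'$ is a $(G;\bigcup_i\mathbb H_i)$-graph --- but none of that is load-bearing, since the Drutu--Sapir argument alone proves the lemma.
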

As an application of Theorem~\ref{hyp extend}, we now generalize Corollary~\ref{mlemma} to handle the case where edge groups are quasiconvex and not merely parabolic.
\begin{thm}[Combination along Total, Malnormal and Quasiconvex Subgroups] \label{QC combination}
\indent
\begin{enumerate}
\item \label{aaa:1} Let $G_i$ be hyperbolic relative to $\mathbb P_i$ for $i=1, 2$. Let $C_i \leq G_i$ be almost malnormal, total and relatively quasiconvex. Let ${C_1}'\leq C_1$. Then $G=G_1*_{{C_1}'=C_2} G_2$ is hyperbolic relative to $\mathbb P=\mathbb P_1 \cup \mathbb P_2-\{P_2\in \mathbb P_2~:~P^g_2\subseteq C_2,~\text{for some}~g\in G_2\}$.
\item \label{bbb:1} Let $G_1$ be hyperbolic relative to $\mathbb P$. Let $\{C_1,C_2\}$ be almost malnormal and assume each $C_i$ is total and relatively quasiconvex. Let ${C_1}'\leq C_1$. Then $G=G_1*_{{{C_1}'={C_2}^t}}$ is hyperbolic relative to $\mathbb P=\mathbb P-\{P_2\in \mathbb P_2~:~P^g_2\subseteq C_2,~\text{for some}~g\in G_2\}$.
\end{enumerate}
\end{thm}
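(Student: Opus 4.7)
The plan is to combine Yang's extension theorem (Theorem~\ref{hyp extend}) with our parabolic-edge combination theorem (Theorem~\ref{first}) and the associativity lemma (Lemma~\ref{associative hyp}). The hypotheses on each $C_i$ are precisely what is needed to promote $C_i$ to a peripheral subgroup of $G_i$, reducing the problem to the parabolic-edge setting of Theorem~\ref{first}; a final unfolding of peripherals then yields the claimed $\mathbb P$.

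For part~(1), I would first form the extended peripheral structure $\mathbb P_i^{+} := \bigl(\mathbb P_i \setminus \{P \in \mathbb P_i : P^g \subseteq C_i \text{ for some } g\in G_i\}\bigr) \cup \{C_i\}$ for $i=1,2$ and check the two hypotheses of Theorem~\ref{hyp extend}. Quasiconvexity of each element of $\mathbb P_i^{+}$ relative to $\mathbb P_i$ is immediate: by assumption for $C_i$, and automatic for the remaining parabolics. The step where totality of $C_i$ is essential is in verifying almost malnormality of $\mathbb P_i^{+}$: whenever $C_i^g \cap P$ is infinite for some $P \in \mathbb P_i^{+} \setminus \{C_i\}$, totality of $C_i$ forces $P^{g^{-1}} \subseteq C_i$, which contradicts our choice of $\mathbb P_i^{+}$; the remaining cases follow from almost malnormality of $C_i$ and of $\mathbb P_i$. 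Theorem~\ref{hyp extend} then gives that $G_i$ is hyperbolic relative to $\mathbb P_i^{+}$.

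With these enriched peripheral structures, the edge group $C_2 = C_1'$ is parabolic in both $(G_1, \mathbb P_1^{+})$ (as a subgroup of $C_1$) and $(G_2, \mathbb P_2^{+})$ (as $C_2$ itself), so Theorem~\ref{first} produces a peripheral structure $\mathbb Q$ for $G$ consisting of the stabilizers of parabolic trees in the Bass-Serre tree. Almost malnormality of $\mathbb P_i^{+}$ ensures each Bass-Serre edge attaches on the $G_1$-side to a unique conjugate of $(v_1, C_1)$ and on the $G_2$-side to a conjugate of $(v_2, C_2)$. Hence each non-singleton parabolic tree is a star centered at a conjugate of $(v_1, C_1)$ with $[C_1:C_1']$ leaves, each leaf being a distinct conjugate of $(v_2, C_2)$, and its stabilizer is the graph-of-groups fundamental group $C_1 *_{C_1'} C_2$, which collapses to $C_1$ because $C_2 = C_1'$. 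Thus $\mathbb Q$ is the disjoint union of the surviving peripherals of $\mathbb P_1$ and $\mathbb P_2$ together with the single conjugacy class of $C_1$.

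Finally, since $C_1$ is relatively quasiconvex and total in $G_1$, Remark~\ref{fg for lqc} gives that $C_1$ is itself hyperbolic relative to the induced structure $\{P \in \mathbb P_1 : P^g \subseteq C_1\}$. Applying Lemma~\ref{associative hyp} to unfold $C_1$ inside $\mathbb Q$ recovers $\mathbb P_1 \cup \mathbb P_2 \setminus \{P \in \mathbb P_2 : P^g \subseteq C_2\}$, as claimed. Part~(2) follows the same blueprint: extend $\mathbb P$ by the almost malnormal collection $\{C_1, C_2\}$ using Theorem~\ref{hyp extend}, apply Theorem~\ref{first} to the HNN splitting, and unfold via Lemma~\ref{associative hyp}. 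I expect the main obstacle to be the parabolic-tree bookkeeping in the HNN case, since the stable letter $t$ identifies a conjugate of $C_1'$ inside $C_1$ with a conjugate of $C_2$, so the parabolic trees acquire a nontrivial HNN-type structure whose stabilizer must be identified before the final refinement.
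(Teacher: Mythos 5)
Your proposal is correct and follows essentially the same route as the paper: extend each peripheral structure to $\mathbb E_i=\mathbb P_i-\{P: P^g\leq C_i\}\cup\{C_i\}$ via Theorem~\ref{hyp extend} (using totality plus almost malnormality exactly as you do), combine, and then unfold with Lemma~\ref{associative hyp}. The only difference is cosmetic: where you re-derive the combination step from Theorem~\ref{first} by analyzing the parabolic trees (star stabilized by $C_1$), the paper simply cites Corollary~\ref{mlemma}, whose proof is that same tree analysis, and likewise your worry about HNN-type parabolic trees in part~(2) does not materialize since almost malnormality of $\{C_1,C_2\}$ again forces the trees to be $i$-pods.
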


\begin{proof}

{\bf (1)}: For each $i$, let
$$\mathbb E_i=\mathbb P_i-\{P\in \mathbb P_i \colon P^g\leq {C_i},~\text{for some}~ g\in G_i\}\cup \{C_i\}$$ Without loss of generality, we can assume that $\mathbb E_i$ extends $\mathbb P_i$, since we can replace an element of $\mathbb P_i$ by its conjugate. We now show that $G_i$ is hyperbolic relative to $\mathbb E_i$ by verifying the two conditions of Theorem~\ref{hyp extend}: $\mathbb E_i$ is malnormal in $G_i$, since $\mathbb P_i$ is almost malnormal and $C_i$ is total and almost malnormal. Each element of $\mathbb E_i$ is relatively quasiconvex, since $C_i$ is relatively quasiconvex by hypothesis and each element of $\mathbb P_i$ is relatively quasiconvex by Remark~\ref{fg for lqc}.

We now regard each $G_i$ as hyperbolic relative to $\mathbb E_i$. Therefore since the edge group $C_2={C_1}'$ is maximal on one side, by Corollary~\ref{mlemma}, $G$ is hyperbolic relative to $\mathbb E=\mathbb E_1\cup \mathbb E_2-\{C_2\}$.

We now apply Lemma~\ref{associative hyp} to show that $G$ is hyperbolic relative to $\mathbb P$. We showed that $G$ is hyperbolic relative to $\mathbb E$. But each element of $\mathbb E$ is hyperbolic relative to $\mathbb P$ that it contains. Thus by Lemma~\ref{associative hyp}, we obtain the result.

{\bf {(2)}}: The proof is analogous to the proof of $(1)$.
\end{proof}
The following can be obtained by induction using Theorem~\ref{QC combination} or can be proven directly using the same mode of proof.
\begin{cor}\label{hyp Qc combination}
Let $G$ split as a finite graph of groups. Suppose
\begin{itemize}
\item[(a)] Each $G_{\nu}$ is hyperbolic relative to $\mathbb P_{\nu}$;
\item[(b)] Each $G_e$ is total and relatively quasiconvex in $G_{\nu}$;
\item[(c)] $\{G_e: e~ \textrm{is attached to } \nu\}$ is almost malnormal in $G_{\nu}$ for each vertex $\nu$.
\end{itemize}

Then $G$ is hyperbolic relative to $\bigcup_{\nu} \mathbb P_{\nu}-\{\text{repeats}\}$.
\end{cor}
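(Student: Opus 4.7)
My plan is to follow the mode of proof used for Theorem~\ref{QC combination}: extend the peripheral structure of each vertex group so that it absorbs the incident edge groups, then invoke the maximal--parabolic combination Corollary~\ref{hypspli}, and finally unfold via Lemma~\ref{associative hyp} to recover the original peripherals.

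First, for each vertex $\nu$ I would set
\[
\mathbb E_\nu \;=\; \bigl(\mathbb P_\nu \setminus \mathcal S_\nu\bigr) \;\cup\; \{\,G_e : e \text{ is attached to } \nu,\; G_e \text{ infinite}\,\},
\]
where $\mathcal S_\nu$ is the set of those $P \in \mathbb P_\nu$ for which some $G_\nu$-conjugate $P^g$ is contained in some incident edge group $G_e$. Totality of $G_e$ in $G_\nu$ (hypothesis~(b)) ensures that each such $P^g$ is in fact \emph{equal} to (a conjugate of) a subgroup of $G_e$ rather than merely having infinite intersection, so $\mathbb E_\nu$ honestly extends $\mathbb P_\nu$ in the sense of Definition~\ref{parabolic structure}. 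I would then check the two hypotheses of Theorem~\ref{hyp extend}: $\mathbb E_\nu$ is almost malnormal in $G_\nu$ (the edge groups form an almost malnormal family by~(c), the remaining peripherals are automatically almost malnormal, and after excising $\mathcal S_\nu$ the two classes have only finite intersections by totality); and every element of $\mathbb E_\nu$ is quasiconvex in $(G_\nu,\mathbb P_\nu)$ -- the edge groups by~(b) and the remaining peripherals by Remark~\ref{fg for lqc}. Hence $G_\nu$ is hyperbolic relative to $\mathbb E_\nu$.

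With respect to these enlarged peripheral structures, every edge group $G_e$ is a maximal parabolic subgroup of both $G_{\iota(e)}$ and $G_{\tau(e)}$, so Corollary~\ref{hypspli} applies and yields that $G$ is hyperbolic relative to $\bigcup_\nu \mathbb E_\nu - \{\text{repeats}\}$. To return to the original peripheral structures I would invoke Lemma~\ref{associative hyp}: by Remark~\ref{fg for lqc} each $G_e$ in this collection is itself hyperbolic relative to its induced peripheral structure, and because $G_e$ is total this induced structure consists precisely of those $G_\nu$-conjugates of elements of $\mathcal S_\nu$ that lie inside $G_e$, viewed from either side of $e$. Unfolding thus replaces each $G_e$ by the subsumed peripherals $\mathcal S_{\iota(e)}\cup \mathcal S_{\tau(e)}$ it contains; combined with the prior step, this produces the desired conclusion that $G$ is hyperbolic relative to $\bigcup_\nu \mathbb P_\nu - \{\text{repeats}\}$.

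The main obstacle will be the careful bookkeeping of ``repeats.'' Peripherals of $G_{\iota(e)}$ that were absorbed into $G_e$ are identified in $G$ with corresponding subsumed peripherals of $G_{\tau(e)}$, so after the unfolding step one must confirm that each conjugacy class of parabolic subgroup of $G$ appears exactly once in the final collection. A secondary subtlety is the verification of almost malnormality of $\mathbb E_\nu$ in the first step, which relies in an essential way on totality of the edge groups to rule out infinite intersections between an original peripheral and a conjugate of an edge group that has not been removed from $\mathbb P_\nu$.
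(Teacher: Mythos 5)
Your strategy is exactly the one the paper intends: the paper's entire ``proof'' of this corollary is the remark that it follows by induction on Theorem~\ref{QC combination} or directly by the same mode of proof, and your argument is precisely that mode of proof run vertex-by-vertex -- extend each $\mathbb P_\nu$ to $\mathbb E_\nu$ by swapping in the incident (infinite) edge groups, verify the two hypotheses of Theorem~\ref{hyp extend} using totality, condition (c), and Remark~\ref{fg for lqc}, apply the maximal-parabolic combination result (Corollary~\ref{hypspli} here, Corollary~\ref{mlemma} in the two-vertex case), and then unfold with Lemma~\ref{associative hyp}. Up to that point your proposal matches the paper.

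The one step that is not justified as written is the unfolding: you replace each $G_e$ by $\mathcal S_{\iota(e)}\cup\mathcal S_{\tau(e)}$, i.e.\ by the \emph{union} of the two induced peripheral structures on $G_e$ coming from its two vertex groups. Lemma~\ref{associative hyp} only allows you to choose \emph{one} relatively hyperbolic structure on each peripheral subgroup of the intermediate structure, and nothing in hypotheses (a)--(c) identifies, or even makes compatible, the subsumed peripherals on the two sides of an edge: $G_e$ is rqc and total in both vertex groups, so it inherits a valid structure from each side, but these are in general two different structures on the same group, and their union need not be an almost malnormal family (hence need not be a peripheral structure at all). Your phrase ``viewed from either side of $e$'' and the later claim that absorbed peripherals of $G_{\iota(e)}$ ``are identified'' with corresponding ones of $G_{\tau(e)}$ silently assume a matching that is not hypothesized. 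The repair is the one the paper itself uses in the proof of Theorem~\ref{QC combination}: for each edge choose one side and unfold $G_e$ using only that side's induced structure (the conjugates of $\mathcal S$-elements of the chosen vertex group lying in $G_e$); the subsumed peripherals of the other side are then simply discarded, and these discarded subgroups are exactly the ``(some of the) repeats'' that the statement and the remark following Corollary~B allow you to omit. With that adjustment (and the usual conventions for finite edge groups via Technical Remark~\ref{tr}, which both you and the paper gloss over), your argument goes through and coincides with the paper's.
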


Yang characterized relative quasiconvexity
with respect to extensions in \cite{Yang11} as follows:

\begin{thm}[Quasiconvexity in Extended Peripheral Structure]\label{Yang}
Let $G$ be hyperbolic relative to $\mathbb P$ and relative to $\mathbb E$. Suppose that $\mathbb E$ extends $\mathbb P$.
Then
\begin{enumerate}
\item If $H\leq G$ is quasiconvex relative to $\mathbb P$,
then $H$ is quasiconvex relative to $\mathbb E$.

\item Conversely, if $H\leq G$ is quasiconvex relative to $\mathbb E$, then $H$ is quasiconvex relative to $\mathbb P$ if and only if $H\cap {E}^{g}$ is quasiconvex relative to $\mathbb P$ for all $g \in G$ and $E \in \mathbb E$.
\end{enumerate}
\end{thm}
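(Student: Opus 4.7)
The plan is to prove Theorem~\ref{Yang} by constructing compatible fine hyperbolic graphs for the two peripheral structures and transferring quasiconvex subgraphs between them. Since $\mathbb E$ extends $\mathbb P$ and $G$ is hyperbolic relative to both, Theorem~\ref{hyp extend} guarantees each $E\in \mathbb E$ is $\mathbb P$-quasiconvex; pick a $(G;\mathbb P)$-graph $K$ together with an $E$-cocompact connected quasiconvex subgraph $L_E\subset K$ for each conjugacy class of $E\in \mathbb E$. I would then build a $(G;\mathbb E)$-graph $\bar K$ from $K$ by adding one new vertex $v_{E^g}$ for each conjugate $E^g$, and equivariantly gluing finitely many $E^g$-orbits of edges with finite stabilizers connecting $v_{E^g}$ into $gL_E$. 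The verifications that $\bar K$ is fine and hyperbolic with the claimed vertex stabilizers reduce to the corresponding properties of $K$ together with the $\mathbb P$-quasiconvexity of each $L_E$.

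For part (1), suppose $H$ has an $H$-cocompact quasiconvex subgraph $M\subset K$. Enlarge $M$ to a subgraph $M'\subset \bar K$ by adjoining each $v_{E^g}$ whose associated subgraph $gL_E$ meets $M$, together with the new gluing edges from $v_{E^g}$ to $gL_E\cap M$. Since each $H\cap E^g$ is $\mathbb P$-quasiconvex by Remark~\ref{fg for lqc} (as an intersection with a $\mathbb P$-quasiconvex subgroup) and so acts cocompactly on $gL_E\cap M$, the resulting $M'$ is $H$-cocompact. Any $\bar K$-geodesic between two points of $M'$ can be lifted to a $K$-path of comparable length, replacing each traversal through a vertex $v_{E^g}$ by a short detour inside $gL_E$, whence the quasi-isometric embedding of $M$ in $K$ promotes to a quasi-isometric embedding of $M'$ in $\bar K$.

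The ``only if'' direction of part (2) is immediate: by Theorem~\ref{hyp extend}, every conjugate $E^g$ is $\mathbb P$-quasiconvex, and intersections of $\mathbb P$-quasiconvex subgroups are $\mathbb P$-quasiconvex by Remark~\ref{fg for lqc}. The ``if'' direction is where I expect the main obstacle. Given an $H$-cocompact quasiconvex $\bar M\subset \bar K$ together with the hypothesis that each $H\cap E^g$ is $\mathbb P$-quasiconvex, I would form $M'\subset K$ by removing each vertex $v_{E^g}\in \bar M$ and replacing it with an $(H\cap E^g)$-cocompact quasiconvex subgraph $N_{E^g}\subset gL_E$, enlarged to contain every vertex of $gL_E$ that was joined to $v_{E^g}$ by a gluing edge, and finally taking the $H$-orbit union together with $\bar M\cap K$. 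Cocompactness of $M'$ under $H$ follows from the finite number of $H$-orbits of vertices $v_{E^g}$ in $\bar M$ and from the $(H\cap E^g)$-cocompactness of each $N_{E^g}$.

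The principal difficulty is to verify that $M'$ is quasi-isometrically embedded in $K$. Given a $K$-geodesic $\gamma$ between two points of $M'$, I would either confine $\gamma$ within a single $gL_E$ piece, where quasiconvexity of $N_{E^g}$ inside $gL_E$ inside $K$ follows from Lemma~\ref{associative}, or decompose $\gamma$ into segments each entering and leaving distinct pieces $gL_E$. In the latter case the coarse sequence of pieces visited projects to a sequence of vertices $v_{E^g}$ that forms a $\bar K$-path between the corresponding endpoints, and quasiconvexity of $\bar M$ in $\bar K$ forces this projected path to stay near $\bar M$; combining with the in-piece control from Lemma~\ref{associative}, one obtains the desired linear bound on $d_K(\gamma)$ in terms of $d_{M'}(\gamma)$. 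The technical bookkeeping to align these two regimes (coarse in $\bar K$ versus fine in each $gL_E$) is where the bulk of the work lies.
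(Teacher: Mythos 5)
First, note that the paper does not prove Theorem~\ref{Yang} at all: it is quoted verbatim from Yang \cite{Yang11} (``Yang characterized relative quasiconvexity with respect to extensions\dots''), so there is no internal proof to compare your argument against. Judged on its own merits, your cone-off strategy (attach a cone vertex $v_{E^g}$ over each translate $gL_E$ of a quasiconvex $E$-cocompact subgraph and transfer subgraphs between $K$ and $\bar K$) is a reasonable fine-graph outline, but several of its load-bearing steps fail as written.

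Concretely: (a) in part (1), the assertion that a $\bar K$-geodesic between points of $M'$ ``can be lifted to a $K$-path of comparable length, replacing each traversal through a vertex $v_{E^g}$ by a short detour inside $gL_E$'' is precisely what is false when a peripheral structure is extended; the detour inside $gL_E$ may be arbitrarily long in $K$ (already for $H=E$ itself, $d_{\bar K}$ between points of $L_E$ is uniformly bounded while $d_K$ is not). The statement that must actually be proved is that $M'$ is quasi-isometrically embedded in $\bar K$ directly, i.e.\ that every cone shortcut between points of $M$ is, up to bounded error, realized inside $M'$, and this requires a genuine argument about how $M$ meets the translates $gL_E$. (b) The $H$-cocompactness of $M'$ is not established: you adjoin $v_{E^g}$ whenever $gL_E$ merely meets $M$, but a single vertex of $M$ with infinite stabilizer can lie on infinitely many translates $gL_E$ representing infinitely many $H$-orbits; one must restrict to cosets with infinite (or ``deep'') intersection and prove a finiteness-of-orbits lemma. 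Likewise, ``$H\cap E^g$ acts cocompactly on $gL_E\cap M$'' does not follow from Remark~\ref{fg for lqc}: cocompactness of the intersection of two quasiconvex cocompact subgraphs under the intersection of their stabilizers is the nontrivial content of the intersection theorem cited there, not a formal consequence. (c) The construction of $\bar K$ itself is not a routine reduction: finiteness of the new edge stabilizers requires choosing attachment vertices with finite $E^g$-stabilizer, and fineness of $\bar K$ requires almost malnormality of $\mathbb E$ (available here via Theorem~\ref{hyp extend}) to control coincidences between distinct translates $gL_E$; your sketch never invokes almost malnormality, and without it $\bar K$ is genuinely not fine. The ``only if'' half of part (2) is fine, and your outline of the ``if'' half has the right shape (replace each $v_{E^g}\in\bar M$ by an $(H\cap E^g)$-cocompact quasiconvex piece of $gL_E$, using only the gluing edges of $\bar M$ at $v_{E^g}$, which fall into finitely many $(H\cap E^g)$-orbits), but the ``bookkeeping'' you defer—the two-regime geodesic argument comparing $\bar K$ and the pieces $gL_E$—is the bulk of the proof rather than a technical afterthought. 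As it stands the proposal is an outline with real gaps, not a proof; for this theorem the paper simply defers to \cite{Yang11}.
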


We recall the following observation of Bowditch (see~\cite[Lem~2.7 and 2.9]{MartinezPedrozaWise2011}).
\begin{lem}[$G$-attachment]
Let $G$ act on a graph $K$. Let $p,q \in K^0$ and $e$ be a new edge whose endpoints are $p$ and $q$. The \emph{$G$-attachment} of $e$ is the new graph $K' = K \cup Ge$ which consists of the union of $K$ and copies $ge$ of $e$ attached at $gp$ and $gq$ for any $g\in G$. Note that $K'$ is $G$-cocompact/fine/hyperbolic if $K$ is.\begin{com}G-cofinite\end{com}
\end{lem}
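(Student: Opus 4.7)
The plan is to verify $G$-cocompactness, fineness, and hyperbolicity of $K'$ in turn, exploiting two structural observations: the new edges form a single $G$-orbit $Ge$, and every new edge $ge$ joins a pair of vertices $gp, gq$ whose $K$-distance is the $G$-invariant quantity $D := d_K(p,q)$. For $G$-cocompactness, the quotient $K'/G$ is obtained from $K/G$ by adjoining a single edge (or a loop, if $p$ and $q$ lie in the same $G$-orbit), so finiteness of $K/G$ at once implies finiteness of $K'/G$.

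For hyperbolicity, I would compare the two graph metrics on the common vertex set $V(K) = V(K')$. Clearly $d_{K'} \leq d_K$ since $K \subseteq K'$. Conversely, any edge path in $K'$ may be converted to one in $K$ by replacing each traversal of a new edge $ge$ with a chosen $K$-geodesic of length $D$ from $gp$ to $gq$; this yields $d_K \leq D \cdot d_{K'}$. Therefore the identity on vertices is a $(D,0)$-quasi-isometry between the two graph metrics, and $K'$ inherits Gromov hyperbolicity from $K$ by the standard quasi-isometry invariance for geodesic metric spaces.

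Fineness is the main technical content. Fix an edge $f$ of $K'$ and an integer $n$; the goal is to bound the number of embedded length-$n$ circuits through $f$. I plan to induct on the number $k$ of new edges used by such a circuit. When $k = 0$, the circuit lies in $K$ and fineness of $K$ applies. In the inductive step, I would identify the first new edge $g_1 e$ met along the circuit starting from $f$; since between any two specific vertices of $G\cdot\{p,q\}$ there are at most two new edges, $g_1 e$ is essentially determined by the endpoint at which it is entered. Replacing $g_1 e$ by a chosen $K$-geodesic of length $D$ from $g_1 p$ to $g_1 q$ reduces to a configuration with $k-1$ new edges, and the inductive hypothesis together with the finitely many bookkeeping choices for where the substitution sits yields the desired finiteness.

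The main obstacle lies in precisely this substitution: the inserted $K$-geodesic may intersect the remainder of the circuit, so the resulting object is only a closed walk and not an embedded circuit, which a priori prevents a direct application of fineness of $K$. Handling this cleanly would follow the approach of \cite{MartinezPedrozaWise2011}: one expresses each length-$n$ circuit in $K'$ through $f$ as a combinatorial assembly of finitely many maximal $K$-subpaths joined by new edges with prescribed endpoints, and then invokes fineness of $K$ on each of the finitely many embedded component subcircuits that arise, with auxiliary accounting for the placement of new edges around the circuit.
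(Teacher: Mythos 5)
Your cocompactness and hyperbolicity steps are essentially fine (for the quasi-isometry you need $p,q$ to lie in a common component of $K$, so that $D=d_K(p,q)<\infty$; this holds in all uses here), and note that the paper itself gives no argument for this lemma but simply cites \cite[Lem.~2.7 and 2.9]{MartinezPedrozaWise2011}. The genuine gap is in the fineness step, and it sits exactly where your sketch leans on bookkeeping. The claim that the first new edge met by the circuit is ``essentially determined by the endpoint at which it is entered'' is false: the new edges incident to the vertex $gp$ are the edges $ghe$ with $h\in\operatorname{Stab}(p)$, and when $\operatorname{Stab}(p)$ is infinite these have infinitely many distinct far endpoints $ghq$ (likewise, a single pair $gp,gq$ can carry infinitely many parallel new edges when $\operatorname{Stab}(p)\cap\operatorname{Stab}(q)$ is infinite). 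In this paper that is the typical situation, not a corner case, since attachments are made at parabolic vertices with infinite stabilizers. So the ``finitely many bookkeeping choices'' in your induction are not finite and the count does not close; and the embeddedness problem you flag (the inserted $K$-geodesic may meet the rest of the circuit) is the real content of the lemma, which you defer back to the very reference the paper cites rather than resolve.

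Moreover, no argument in the bare generality you set up can succeed, because the fineness assertion needs hypotheses on the action that your outline never uses. Example: let $G=\mathbb{Z}=\langle t\rangle$ act on the tree $K$ with vertices $c,p,q_n$ ($n\in\mathbb{Z}$) and edges $[c,p]$ and $[c,q_n]$, where $t$ fixes $c$ and $p$ and sends $q_n\mapsto q_{n+1}$; then $K$ is connected, fine, hyperbolic and $G$-cocompact. Attaching $G\cdot e$ with $e=[p,q_0]$ adds the edges $[p,q_n]$ for all $n$, and then the edge $[c,q_0]$ lies in the embedded circuit through $p,q_0,c,q_n$ of length $4$ for every $n\neq 0$, so $K'$ is not fine. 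What saves the lemma in its intended setting (a $(G;\mathbb{P})$-graph, as in Lemma~\ref{QC vertex}) is the finiteness of edge stabilizers, which interacts with fineness (via the characterization of fineness through finiteness of the set of bounded-length arcs between two fixed vertices, and the resulting almost disjointness of stabilizers of distinct vertices) to control how many translates of $e$ can appear in a bounded-length circuit through a fixed edge. Since your argument invokes no such hypothesis, it cannot be completed as written; a correct treatment is the one in \cite{MartinezPedrozaWise2011}.
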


In the following lemma, we prove that when a relatively hyperbolic group $G$ splits then relative quasiconvexity of vertex groups is equivalent to relative quasiconvexity of the edge groups.

\begin{lem}[Quasiconvex Edges $\Longleftrightarrow$ Quasiconvex Vertices]\label{QC vertex}
Let $G$ be hyperbolic relative to $\mathbb P$. Suppose $G$ splits as a finite graph of groups whose vertex groups and edge groups are finitely generated. Then the edge groups are quasiconvex relative to $\mathbb P$ if and only if the vertex groups are quasiconvex relative to $\mathbb P$.
\end{lem}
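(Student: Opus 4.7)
My plan is to prove the two implications separately, with the harder direction being \emph{edge quasiconvex $\Longrightarrow$ vertex quasiconvex}.

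For the direction \emph{vertex quasiconvex $\Longrightarrow$ edge quasiconvex}, I would note that every edge group arises as an intersection of two vertex stabilizers in the Bass-Serre tree $T$. Namely, for an edge $\tilde e$ of $T$ with endpoints $\tilde u, \tilde v$, one has $G_{\tilde e} = G_{\tilde u} \cap G_{\tilde v}$; every edge group $G_e$ of $\Gamma$ equals such a $G_{\tilde e}$ up to conjugation. Since $G_{\tilde u}$ and $G_{\tilde v}$ are conjugates of vertex groups of $\Gamma$, they are quasiconvex in $G$ by hypothesis, and the intersection property of Remark~\ref{fg for lqc} (together with conjugacy-invariance of relative quasiconvexity) yields that $G_e$ is quasiconvex relative to $\mathbb P$.

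For the direction \emph{edge quasiconvex $\Longrightarrow$ vertex quasiconvex}, I would fix a vertex $v$ of $\Gamma$ with incident edges $e_1, \dots, e_m$ and construct a $G_v$-cocompact quasiconvex subgraph $L_v$ inside a $(G;\mathbb P)$-graph $K$. By Lemma~\ref{fg}, $G_v$ is finitely generated; fix a finite generating set $\{s_1, \dots, s_k\}$. For each $i$, the hypothesis furnishes a $G_{e_i}$-cocompact connected quasiconvex subgraph $L_{e_i} \subset K$, which after suitable translation (and connection by a short path when needed) we may arrange to pass through a common basepoint $p$. For each generator $s_j$, choose a path $\gamma_j$ in $K$ from $p$ to $s_j p$, and set
\[
L_v \;=\; G_v \cdot \Bigl( \bigcup_{i=1}^{m} L_{e_i} \;\cup\; \bigcup_{j=1}^{k} \gamma_j \Bigr).
\]
Then $L_v$ is connected (the pieces meet at $p$), $G_v$-invariant, and $G_v$-cocompact.

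The main obstacle is verifying that $L_v$ is quasi-isometrically embedded in $K$. Given a relative geodesic $\alpha$ in $K$ between two vertices of $L_v$, the strategy is to decompose $\alpha$ according to its excursions through infinite-valence vertices of $K$ and use the fineness of $K$ to bound any ``detour'' of $\alpha$ outside $L_v$. Each excursion of $\alpha$ that enters a conjugate of some $L_{e_i}$ can be rerouted through $L_{e_i}$ using the quasi-isometric embedding $L_{e_i} \hookrightarrow K$; between such excursions, $\alpha$ tracks the $G_v$-translates of the generator paths $\gamma_j$ up to bounded error. This analysis parallels the quasiconvexity argument in Theorem~\ref{qc first}, with edge groups playing the role that parabolic vertex-group intersections played there.
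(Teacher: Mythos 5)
The ``vertex $\Rightarrow$ edge'' direction of your proposal is exactly the paper's argument (intersections of conjugates of vertex groups, plus the intersection property from Remark~\ref{fg for lqc}), and that part is fine. The gap is in the converse. Your candidate subgraph $L_v = G_v\cdot\bigl(\bigcup_i L_{e_i}\cup\bigcup_j\gamma_j\bigr)$ is a perfectly reasonable connected $G_v$-cocompact subgraph, but proving that it is quasi-isometrically embedded is the \emph{entire} content of the lemma: once $G_v$ is known to be relatively quasiconvex, every connected $G_v$-cocompact subgraph is quasiconvex (this is how Lemma~\ref{associative} is argued), so your final verification step is equivalent to the conclusion and cannot be waved through. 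The specific claims you lean on do not hold up: fineness of $K$ controls circuits through an edge, not detours of geodesics, so it gives no bound on how far a geodesic between two points of $L_v$ can stray; and the assertion that between excursions the geodesic ``tracks the $G_v$-translates of the generator paths $\gamma_j$ up to bounded error'' is precisely the undistortion statement you are trying to prove, with no mechanism supplied. Nothing in your setup forces a geodesic leaving $L_v$ to pass through a conjugate of some $L_{e_i}$ at all, because $K$ is an arbitrary $(G;\mathbb P)$-graph with no a priori relation to the splitting; the analogy with Theorem~\ref{qc first} fails here, since that argument takes place in the specially built tree-of-spaces graph of Theorem~\ref{first}, where the edge groups are parabolic, not merely relatively quasiconvex.

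The missing idea is to impose the tree structure on $K$ via a $G$-equivariant map $f\colon K\to T$ to the Bass-Serre tree (vertices to vertices, edges to geodesics). This is what the paper does: subdivide so each edge of $T$ is two halfedges, use finite generation of the edge groups to perform finitely many $G$-attachments producing $K'$ in which each preimage $M_i'=f'^{-1}(\omega_i)$ of an edge-midpoint is connected and $G_{e_i}$-cocompact (hence quasiconvex, by the hypothesis that $G_{e_i}$ is relatively quasiconvex), and then take $L'=f'^{-1}\bigl(N_{\frac12}(\nu)\bigr)$. The tree separation property then does the work you were missing: any path or geodesic with endpoints in $L'$ and interior outside it must exit and re-enter through a single translate $g M_i'$, so connectivity and quasiconvexity of $L'$ both reduce to connectivity and quasiconvexity of the $M_i'$. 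Your construction lacks any analogue of this separation mechanism, so as written the quasiconvexity of $L_v$ is unproved.
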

\begin{proof}
If the vertex groups are quasiconvex relative to $\mathbb P$ then so are the edge groups, since relative quasiconvexity is preserved by intersection (see \cite{HruskaRelQC,MartinezPedrozaCombinations2009}) in the f.g. group $G$.
Assume the edge groups are quasiconvex relative to $\mathbb P$. Let $K$ be a $(G;\mathbb P)$ graph and let $T$ be the Bass-Serre tree for $G$. Let
 $f \colon K \rightarrow T$ be a $G$-equivariant map that sends vertices to vertices and edges to geodesics. Subdivide $K$ and $T$, so that each edge is the union of two length $\frac{1}{2}$~\emph{halfedges}. Let $\nu$ be a vertex in $T$. It suffices to find a $G_\nu$-cocompact quasiconvex subgraph $L$ of $K$.

Let $\{e_1,\dots,e_m\}$ be representatives of the $G_\nu$-orbits of halfedges attached to $\nu$. Let $\omega_i$ be the other vertex of $e_i$ for $1\leq i \leq m$. Since each $G_{\omega_i}=G_{e_i}$ is f.g. by hypothesis, we can perform finitely many $G_{\omega_i}$-attachments of arcs so that the preimage of $\omega_i$ is connected for each $i$. This leads to finitely many $G$-attachments to $K$ to obtain a new fine hyperbolic graph $K'$. By mapping the newly attached edges to their associated vertices in $T$, we thus obtain a $G$-equivariant map $f' \colon K'\rightarrow T$ such that $M'_i=f'^{-1} (\omega_i)$ is connected and $G_{\omega_i}$-cocompact for each $i$.

Consider $L'=f'^{-1}(N_{\frac{1}{2}}(\nu))$ where $N_{\frac{1}{2}}(\nu)$ is the closed $\frac{1}{2}$-neighborhood of $\nu$. To see that $L'$ is connected, consider a path $\sigma$ in $K'$ between distinct components of $L'$. Moreover choose $\sigma$ so that its image in $T$ is minimal among all such choices. Then $\sigma$ must leave and enter $L'$ through the same $g_\nu M'_i$ which is connected by construction.

We now show that $L'$ is quasiconvex. Consider a geodesic $\gamma$ that intersects $L'$ exactly at its endpoints. As before the endpoints of $\gamma$ lie in the same $g_\nu M'_i$. Since $g_\nu M'_i$ is $\kappa_i$-quasiconvex for some $\kappa_i$, we see that $\gamma$ lies in $\kappa$-neighborhood of $g_\nu M'_i$ and hence in the $\kappa$-neighborhood of $L'$.
\end{proof}

\begin{lem}[Total Edges $\Longleftrightarrow$ Total Vertices]\label{total}
Let $G$ be hyperbolic relative to $\mathbb P$. Let $G$ act on a tree $T$. For each $P\in \mathbb P$ let $T_P$ be a minimal $P$-subtree. Assume that no $T_P$ has a finite edge stabilizer in the $P$-action. Then edge groups of $T$ are total in $G$ iff
  vertex groups are total in $G$.
\end{lem}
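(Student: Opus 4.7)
The plan is to analyze the action of an arbitrary conjugate $P^g$ on $T$, translating the hypothesis about $T_P$ to $T_{P^g}=gT_P$ via conjugation, and leveraging totality of either the edge or the vertex stabilizers to convert an ``infinite intersection with $P^g$'' into a ``full containment of $P^g$.''

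For the easier direction $(\Leftarrow)$, assume each vertex group is total. Let $e$ be an edge with endpoints $v,w$, and suppose $G_e\cap P^g$ is infinite. Since $G_e\leq G_v$ the intersection $G_v\cap P^g$ is infinite, so totality forces $P^g\leq G_v$; similarly $P^g\leq G_w$. Thus $P^g$ fixes both endpoints of $e$ and therefore fixes $e$ (we take the action to be without inversions, which can be arranged by subdivision if necessary), giving $P^g\leq G_e$ as required.

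For $(\Rightarrow)$, assume each edge group is total, and let $v$ be a vertex with $G_v\cap P^g$ infinite. The first step is to show that $P^g$ has a fixed vertex in $T$. Otherwise the minimal $P^g$-subtree $T_{P^g}$ is nondegenerate, so pick any edge $e'\subset T_{P^g}$; the hypothesis on $T_P$, transferred by conjugation, asserts that $G_{e'}\cap P^g=\mathrm{Stab}_{P^g}(e')$ is infinite, and totality of $G_{e'}$ then forces $P^g\leq G_{e'}$. But then $P^g$ fixes $e'$ pointwise and hence fixes a vertex, contradicting the nondegeneracy (minimality) of $T_{P^g}$. So $P^g$ fixes some vertex $u$. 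Now $G_v\cap P^g$ fixes both $v$ and $u$ and therefore pointwise fixes the geodesic $[v,u]$. If $u=v$ we are done; otherwise, for each edge $e$ on $[v,u]$ the intersection $G_e\cap P^g$ contains $G_v\cap P^g$ and is thus infinite, so edge totality yields $P^g\leq G_e\leq G_v$.

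The principal obstacle is the first step of $(\Rightarrow)$, namely producing a fixed vertex for $P^g$: this is exactly what the hypothesis on edge stabilizers of $T_P$ is designed to supply. Without it, $P^g$ could in principle act with hyperbolic elements along an axis all of whose edge stabilizers are finite, in which case the totality of edge groups would give no information and the implication from ``infinite intersection'' to ``containment'' would fail.
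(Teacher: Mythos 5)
Your proof is correct and takes essentially the same route as the paper's: the $(\Leftarrow)$ direction amounts to the observation that $G_e=G_v\cap G_w$ is an intersection of total subgroups, and in $(\Rightarrow)$ you combine the conjugated minimal-subtree hypothesis with totality of edge stabilizers to force $P^g$ to be elliptic and then fix the geodesic to $v$, which is exactly the content behind the paper's terse ``the action of $P^g$ violates our hypothesis'' case. One cosmetic point: along $[v,u]$ the containment $G_e\leq G_v$ holds only for the edge incident to $v$, but that single edge is all your argument needs.
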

\begin{proof}
Since the intersection of two total subgroups is total, if the vertex groups are total then the edge groups are also total. We now assume that the edge groups are total. Let $G_{\nu}$ be a vertex group and $P\in \mathbb P$ such that $P^g\cap G_\nu$ is infinite for some $g\in G$. If $|P^g\cap G_e|=\infty$ for some edge $e$ attached to $\nu$, then $P\subseteq G_e$, thus $P\subseteq G_e\subseteq G_\nu$. Now suppose that $|P^g\cap G_e|<\infty$ for each $e$ attached to $\nu$. If $P^g\nleq G_\nu$ then the action of $P^g$ on $gT$ violates our hypothesis.
\end{proof}

\begin{rem}\label{parab spli}
Suppose $G$ is f.g. and $G$ is hyperbolic relative to $\mathbb P$. Let $P\in \mathbb P$ such that $P=A*_C B$ [$P=A*_{C={C'}^t}$] where $C$ is a finite group. Since $P$ is hyperbolic relative to $\{A, B\}$ [$\{A\}$], by Lemma~\ref{associative hyp}, $G$ is hyperbolic relative to $\mathbb P'=\mathbb P-\{P\}\cup \{A, B\}$ [$\mathbb P'=\mathbb P-\{P\}\cup \{A\}$].
\end{rem}

We now describe a more general criterion for relative quasiconvexity which is proven by combining Corollary~\ref{qc in parab} with Theorem~\ref{Yang}.
\begin{thm}\label{last}
Let $G$ be f.g. and hyperbolic relative to $\mathbb P$. Suppose $G$ splits as a finite graph of groups. Suppose
\begin{enumerate}[(a)]
\item\label{cond:aaa} Each $G_e$ is total in $G$;
\item\label{cond:bbb} Each $G_e$ is relatively quasiconvex in $G$;
\item\label{cond:ccc} $\{G_e: e \textrm{  is attached to } \nu\}$ is almost malnormal in $G_{\nu}$ for each vertex $\nu$.
\end{enumerate}
Let $H \leq G$ be tamely generated subgroup of $G$. Then $H$ is relatively quasiconvex in $G$.
\end{thm}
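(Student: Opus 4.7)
The plan is to reduce the theorem to the maximal-parabolic case already treated by Corollary~\ref{qc in parab}, and then to descend from that extended setting back to $\mathbb P$ via Yang's Theorem~\ref{Yang}. In broad outline, I will first enlarge each vertex group's peripheral structure so that every edge group $G_e$ becomes a \emph{maximal} parabolic there, then combine these enlarged structures into a new peripheral structure $\mathbb E$ for $G$ that extends $\mathbb P$, conclude relative quasiconvexity of $H$ with respect to $\mathbb E$ from Corollary~\ref{qc in parab}, and finally descend back to $\mathbb P$ using Theorem~\ref{Yang}(2).

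The first step is the setup. Since each $G_e$ is relatively quasiconvex in $G$ by hypothesis~(b), Lemma~\ref{QC vertex} shows that each $G_\nu$ is also relatively quasiconvex in $G$ and hence is hyperbolic relative to an induced peripheral structure $\mathbb P_\nu$; by Lemma~\ref{associative} each $G_e$ is then relatively quasiconvex in $G_\nu$ with respect to $\mathbb P_\nu$, and totality of $G_e$ in $G$ transfers to totality in $G_\nu$ since $\mathbb P_\nu$-representatives arise as intersections with conjugates of $\mathbb P$. After replacing representatives in $\mathbb P_\nu$ by $G_\nu$-conjugates so that any $P \in \mathbb P_\nu$ which is conjugate into some $G_e$ is actually chosen inside $G_e$, I would define $\mathbb E_\nu$ to consist of the $G_e$'s attached to $\nu$ together with those $P \in \mathbb P_\nu$ not contained in any such $G_e$. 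Theorem~\ref{hyp extend} then promotes the hyperbolicity of $G_\nu$ from $\mathbb P_\nu$ to $\mathbb E_\nu$: almost malnormality of $\mathbb E_\nu$ follows from hypothesis~(c), the almost malnormality of $\mathbb P_\nu$, the totality of $G_e$, and the representative choice, while quasiconvexity of each member of $\mathbb E_\nu$ relative to $\mathbb P_\nu$ is automatic for parabolics and comes from~(b) for the $G_e$'s. By construction each $G_e$ is \emph{maximal} parabolic in $G_\nu$ relative to $\mathbb E_\nu$, because any strictly larger $\mathbb E_\nu$-parabolic containing it would share an infinite intersection with $G_e$, contradicting almost malnormality.

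Next, Corollary~\ref{hyp Qc combination} applied with $\mathbb E_\nu$ in place of $\mathbb P_\nu$ yields that $G$ is hyperbolic relative to $\mathbb E = \bigcup_\nu \mathbb E_\nu - \{\text{repeats}\}$, and this $\mathbb E$ visibly extends $\mathbb P$. Since relative quasiconvexity of the vertex subgroups of $\Gamma'_H$ is preserved under peripheral extension by Theorem~\ref{Yang}(1), $H$ is still tamely generated in the new setting, and Corollary~\ref{qc in parab} then gives that $H$ is quasiconvex in $G$ relative to $\mathbb E$. To upgrade this to quasiconvexity relative to $\mathbb P$, Theorem~\ref{Yang}(2) demands that $H \cap E^g$ be quasiconvex relative to $\mathbb P$ for every $E \in \mathbb E$ and $g \in G$. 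When $E \in \mathbb P$ this is immediate since $H \cap E^g$ lies inside a parabolic. When $E = G_e$, the intersection $H \cap G_e^g$ is the $H$-stabilizer of an edge in the Bass-Serre tree and hence sits inside a vertex stabilizer $H \cap G_\nu^{g'}$; tame generation supplies relative quasiconvexity of this vertex stabilizer in $G_\nu^{g'}$, hypothesis~(b) together with Lemma~\ref{associative} supplies the same for $G_e^g$, Remark~\ref{fg for lqc} yields that their intersection is relatively quasiconvex in $G_\nu^{g'}$, and a final application of Lemma~\ref{associative} transports this to relative quasiconvexity in $G$ relative to $\mathbb P$.

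The main obstacle I anticipate is the careful bookkeeping in the construction of $\mathbb E_\nu$: almost malnormality must be arranged by choosing representatives so that no surviving peripheral is conjugate into any edge group, and one must track how each of totality, quasiconvexity, and malnormality on $G_e \subseteq G$ translates into the corresponding property inside $G_\nu$. A secondary technical point is confirming that tame generation genuinely passes to the extended structure, and that the vertex-group quasiconvexity data it supplies is strong enough to control the edge intersections $H \cap G_e^g$ appearing in the final descent.
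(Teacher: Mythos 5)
Your outline follows the same route as the paper (extend each vertex structure by the edge groups, apply Corollary~\ref{qc in parab} in the extended setting, then descend with Theorem~\ref{Yang}), but it has a genuine gap at the step where you assert that $\mathbb E$ ``visibly extends $\mathbb P$.'' Nothing in hypotheses (a)--(c) forces an element $P\in\mathbb P$ to be conjugate into a vertex group. If $P$ acts on the Bass--Serre tree without a fixed point, then (by totality of the edge groups) all of its edge stabilizers are finite, the induced peripheral subgroups of the vertex groups are only the proper intersections $G_\nu\cap P^g$, and no element of your $\mathbb E$ contains a conjugate of $P$. A concrete instance: $G=\langle a\rangle * \langle b\rangle$ with $\mathbb P=\{\langle ab\rangle\}$ satisfies (a)--(c) with trivial edge group, yet your $\mathbb E$ consists only of finite (trivial) groups and does not extend $\mathbb P$. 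Since Theorem~\ref{Yang}(2) requires $G$ to be hyperbolic relative to both structures \emph{and} $\mathbb E$ to extend $\mathbb P$, your final descent from $\mathbb E$ to $\mathbb P$ is not licensed; relatedly, your identification of the non-edge elements of $\mathbb E$ with elements of $\mathbb P$ in the case analysis is unjustified without knowing that the vertex groups are total, which again does not follow directly in this generality.

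The paper repairs exactly this point with its opening ``Technical Point'': using Remark~\ref{parab spli} (justified by Lemma~\ref{associative hyp}), each $P\in\mathbb P$ whose induced action on the Bass--Serre tree splits over finite edge groups is replaced by the resulting pieces, producing a refined structure $\mathbb P'$ in which every peripheral is elliptic and has no induced splitting over a finite subgroup. Only then does Lemma~\ref{total} give totality of the vertex groups, so that each induced structure $\mathbb P'_\nu$ consists of conjugates of elements of $\mathbb P'$, the extended structure $\bigcup_\nu\mathbb E_\nu$ genuinely extends $\mathbb P'$, and Theorem~\ref{Yang}(2) applies to descend to $\mathbb P'$; one then ascends from $\mathbb P'$ back to the original $\mathbb P$ by Theorem~\ref{Yang}(1), since $\mathbb P$ extends $\mathbb P'$. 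Your Step~3 edge-group case ($H\cap G_e^g$ handled through the vertex stabilizer via Lemma~\ref{QC vertex}, Remark~\ref{fg for lqc}, and Lemma~\ref{associative}) matches the paper and is fine; inserting the refinement $\mathbb P\rightsquigarrow\mathbb P'$ at the start and the final ascent at the end is what is needed to make your argument complete.
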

\begin{proof}
\emph{Technical Point:} By splitting certain elements of $\mathbb P$ to obtain $\mathbb P'$ as in Remark~\ref{parab spli}, we can assume that $G$ is hyperbolic relative to $\mathbb P'$ and each $G_\nu$ is hyperbolic relative to the conjugates of elements of $\mathbb P'$ that it contains.

Indeed for any $P\in \mathbb P$, if the action of $P$ on a minimal subtree $T_P$ of the Bass-Serre tree $T$, yields a finite graph $\Gamma$ of groups some of whose edge groups are finite, then following Remark~\ref{parab spli}, we can replace $\mathbb P$ by the groups that complement these finite edge groups, (i.e. the fundamental groups of the subgraphs obtained by deleting these edges from $\Gamma$.) Therefore $G$ is hyperbolic relative to $\mathbb P'$.
\begin{com} this process doesn't require f.p. of $P$ (and hence accessibility) to terminate.
Instead, it terminates since we are using the same tree- and hence finitely many possible edges where it can split\end{com}

No $P\in \mathbb P'$ has a nontrivial induced splitting as a graph of groups with a finite edge group.\begin{com}show to Dani\end{com}The edge groups are total relative to $\mathbb P'$ since they are total relative to $\mathbb P$. Therefore by Lemma~\ref{total} the vertex groups are total in $G$ relative to $\mathbb P'$. By Lemma~\ref{QC vertex}, each vertex group $G_\nu$ is relatively quasiconvex in $G$ relative to $\mathbb P$, therefore by Theorem~\ref{Yang} each $G_{\nu}$ is quasiconvex in $G$ relative to $\mathbb P'$. Thus $G_\nu$ has an induced relatively hyperbolic structure $\mathbb P'_\nu$ as in Remark~\ref{fg for lqc}. By totality of $G_\nu$, we can assume each element of $\mathbb P'_\nu$ is a conjugate of an element of $\mathbb P'$. And as usual we may omit the finite subgroups in $\mathbb P'_\nu$.

\emph{Step 1:} We now extend the peripheral structure of each $G_\nu$ from $\mathbb P'_\nu$ to $\mathbb E_\nu$ where $$\mathbb E_\nu=\{G_e: e\textrm{ is attached to }\nu\}\cup \{P\in \mathbb P'_{\nu}: P^g\nleq G_e \textrm{ for any $g\in G_\nu$}\}$$
\begin{com}show to dani\end{com}Almost malnormality of $\mathbb E_\nu$ follows from Condition~\eqref{cond:ccc} and the totality of the edge groups in their vertex groups which follows by the totality of the edge groups in $G$, also relative quasiconvexity of the new elements $G_e$ is Condition~\eqref{cond:bbb}.
Thus by  $G_\nu$ is hyperbolic relative to $\mathbb E_\nu$ by Theorem~\ref{Yang}.

\emph{Step 2:} For each $\tilde \nu$ in the Bass-Serre tree, its $H$-stabilizer $H_{\tilde \nu}$ lies in $G_{\tilde \nu}$ which
we identify
(by a conjugacy isomorphism) with the chosen vertex stabilizer $G_\nu$ in the graph of group decomposition.
Then $H_{\tilde{\nu}}$ is quasiconvex in $G_\nu$ relative to $\mathbb E_\nu$ for each $\nu$ by Theorem~\ref{Yang}, since $\mathbb E_\nu$ extends $\mathbb P'_\nu$ and each $H_{\tilde{\nu}}$ is quasiconvex in $G_\nu$ relative to $\mathbb P'_\nu$. Therefore $H$ is quasiconvex relative to $\bigcup \mathbb E_\nu$ by Corollary~\ref{qc in parab}.

\emph{Step 3:}  $H$ is quasiconvex relative to $\mathbb P'=\bigcup \mathbb P'_\nu$. Since $\bigcup \mathbb E_\nu$ extends $\mathbb P=\bigcup \mathbb P'_\nu$, by Theorem~\ref{Yang}, it suffices to show that $H\cap K^g$ is quasiconvex relative to $\mathbb P'$ for all $K\in \bigcup \mathbb E_\nu$ and $g\in G$. There are two cases:

Case 1: $K\in \mathbb P'_{\nu}$ for some $\nu$. Now $H\cap K^g$ is a parabolic subgroup of $G$ relative to $\mathbb P'$ and is thus quasiconvex relative to $\mathbb P'$.

Case 2: $K=G_e$ for some $e$ attached to some $\nu$. The group $K$ is relatively quasiconvex in $G_\nu$, therefore by Remark~\ref{fg for lqc}, $K^g$ is also relatively quasiconvex but in $G_{g\nu}$. Now since $K^g\cap H= K^g\cap H_{g\nu}$ and $K^g$ and $H_{g\nu}$ are both relatively quasiconvex in $G_{g\nu}$, the group $K^g\cap H$ is relatively quasiconvex in $G_{g\nu}$. Since by Lemma~\ref{QC vertex}, $G_{g\nu}$ is quasiconvex relative to $\mathbb P'$, Lemma~\ref{associative} implies that $K^g\cap H$ is quasiconvex relative to $\mathbb P'$.

Now $H$ is quasiconvex relative to $\mathbb P$ by Theorem~\ref{Yang}, since $\mathbb P$ extends $\mathbb P'$.
\end{proof}
The following result strengthens Theorem~\ref{last}, by relaxing Condition~\eqref{cond:ccc}.
\begin{thm}[Quasiconvexity Criterion for Relatively Hyperbolic Groups that Split]
\label{thm:strongest result}
Let $G$ be f.g. and  hyperbolic relative to $\mathbb P$ such that $G$ splits as a finite graph of groups.
 Suppose
\begin{enumerate}[(a)]
\item Each $G_e$ is total in $G$;
\item Each $G_e$ is relatively quasiconvex in $G$;
\item\label{cond:cc} Each $G_e$ is almost malnormal in $G$.
\end{enumerate}
Let $H\leq G$ be tamely generated. Then $H$ is relatively quasiconvex in $G$.
\end{thm}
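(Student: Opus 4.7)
My plan is to reduce to Theorem~\ref{last} by extending the peripheral structure at each vertex so that edge groups become parabolic, then apply Theorem~\ref{last} in the extended structure, and finally descend to $\mathbb{P}$ via Theorem~\ref{Yang}. This mirrors the strategy used to prove Theorem~\ref{last} itself, but here hypothesis~(c) is formulated as global almost malnormality of each $G_e$ in $G$ rather than as local collection-malnormality at each vertex, so the bookkeeping is different.

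Following the proof of Theorem~\ref{last}, for each vertex $\nu$ I would extend the induced peripheral structure $\mathbb{P}'_\nu$ on $G_\nu$ (obtained via Lemma~\ref{QC vertex} and Remark~\ref{fg for lqc}, possibly after the ``technical point'' splitting of $\mathbb P$) to $\mathbb{E}_\nu = \{G_e : e \text{ attached to } \nu\} \cup \{P \in \mathbb{P}'_\nu : P^g \nleq G_e \text{ for any edge } e \text{ at } \nu,\ g \in G_\nu\}$. To apply Theorem~\ref{hyp extend} I need $\mathbb{E}_\nu$ almost malnormal in $G_\nu$ and each new element quasiconvex relative to $\mathbb{P}'_\nu$; the latter holds by hypothesis~(b) together with Lemma~\ref{associative}. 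Almost malnormality decomposes into: (i) almost malnormality of the surviving elements of $\mathbb{P}'_\nu$, inherited from $\mathbb{P}$; (ii) almost malnormality of each individual $G_e$, which follows from hypothesis~(c) restricted to $G_\nu$; (iii) finiteness of cross intersections $G_e \cap P^g$ for non-absorbed $P\in\mathbb{P}'_\nu$, by totality (hypothesis~(a)); and (iv) finiteness of cross intersections $G_e \cap G_{e'}^h$ for distinct edges $e,e'$ at $\nu$ and $h \in G_\nu$.

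Condition~(iv) is the one new point over Theorem~\ref{last}. I would handle it by first refining the graph of groups so that no two distinct edges at any vertex have overlapping edge groups: if $G_{e_1} \cap G_{e_2}^h$ were infinite for some $h \in G_\nu$, then by totality this intersection lies in a common parabolic of $G$ contained in both $G_{e_1}$ and $G_{e_2}^h$, and the rigidity of almost malnormal relatively quasiconvex subgroups of $G$ (combined with hypothesis~(c) applied to $G_{e_1}$ and $G_{e_2}$ individually) lets me identify and fold $e_1$ and $e_2$ together; this process terminates since the graph of groups is finite. In the refined graph, $\mathbb{E}_\nu$ is almost malnormal, so Theorem~\ref{hyp extend} applies at each vertex, and in the extended peripheral structure $\bigcup_\nu \mathbb{E}_\nu$ for $G$ the hypotheses of Theorem~\ref{last} are satisfied (each edge group is now parabolic in both its vertex groups, so the vertex-collection-malnormality required by Theorem~\ref{last} follows from global almost malnormality of $\bigcup_\nu \mathbb{E}_\nu$). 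Applying Theorem~\ref{last} yields that $H$ is quasiconvex relative to $\bigcup_\nu \mathbb{E}_\nu$, and Theorem~\ref{Yang}(2) descends this back to $\mathbb{P}$: for $P \in \mathbb{P}$, the intersection $H \cap P^g$ is parabolic in $H$'s induced peripheral structure and so is quasiconvex; for $E = G_e$, the intersection $H \cap G_e^g$ is an edge stabilizer in $H$'s induced Bass-Serre splitting and, by tame generation together with Lemma~\ref{associative}, is quasiconvex in $G_e^g$ and hence in $G$. The main obstacle I expect is making condition~(iv) precise---namely justifying the refinement/folding step cleanly using the global almost malnormality of hypothesis~(c).
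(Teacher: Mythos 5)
Your reduction stalls exactly at the point where Theorem~\ref{thm:strongest result} goes beyond Theorem~\ref{last}, namely your condition~(iv). Hypothesis~(c) here only controls $G_e\cap G_e^g$ for a \emph{single} edge group; it says nothing about $G_{e_1}\cap G_{e_2}^h$ for distinct edges at a vertex, and the theorem is stated precisely because such cross intersections may be infinite (e.g.\ one edge group may be an almost malnormal proper subgroup of another, with both almost malnormal in $G$). Your two devices for excluding this do not work. First, totality does not place $G_{e_1}\cap G_{e_2}^h$ inside a common parabolic: totality constrains intersections of an edge group with conjugates of elements of $\mathbb P$, not with another relatively quasiconvex edge group, whose intersection can be infinite and non-parabolic. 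Second, ``folding'' $e_1$ and $e_2$ together is not available: the splitting of $G$ is given, and two distinct edges whose edge groups merely share an infinite subgroup (or are even nested) cannot be identified without changing the graph of groups and, in general, the group it presents; there is no rigidity statement in the paper (or implicit in (a)--(c)) that produces such an identification. So the passage from individual almost malnormality to the vertexwise collection almost malnormality required by Theorem~\ref{last} is a genuine missing idea, not a bookkeeping refinement, and your own closing sentence flags this correctly.

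The paper sidesteps the issue by never asking for collection malnormality of all edges at a vertex. It inducts on the number of edges of $\Gamma$: choosing one open edge $e$, it writes $G=A*_{C^t=D}$ (or $G=A*_{G_e}B$ in the separating case) with $A$ the fundamental group of $\Gamma-e$. In this one-edge splitting the edge groups incident to a vertex form the single pair $\{C,D\}$ (or singletons), and its almost malnormality in $A$ follows directly from hypothesis~(c) for $G_e$ in $G$ together with normal forms, so Theorem~\ref{last} applies to this splitting; the relative quasiconvexity of the vertex-group intersections $H\cap A^g$ needed as input is supplied by the induction hypothesis applied to the splitting of $A$ over $\Gamma-e$ (with $A$ relatively quasiconvex and hence relatively hyperbolic by Lemma~\ref{QC vertex}, Remark~\ref{fg for lqc} and Lemma~\ref{associative}). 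If you want to salvage your outline, replace the refinement/folding step by this edge-at-a-time induction; the rest of your use of Theorem~\ref{Yang} then becomes unnecessary, since Theorem~\ref{last} already returns quasiconvexity relative to the original peripheral structure.
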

\begin{rem}
By Lemma~\ref{associative} and Remark~\ref{rem:G_v automatically quasiconvex},
 Condition~\eqref{cond:bbb} is equivalent to requiring
that each $G_e$ is quasiconvex in $G_\nu$. Also we can replace Condition~\eqref{cond:aaa} by requiring $G_e$ to be total in $G_\nu$.
\end{rem}
\begin{proof}
 We prove the result by induction on the number of edges of the graph of groups $\Gamma$.
 The base case where $\Gamma$ has no edge is contained in the hypothesis.
  Suppose that $\Gamma$ has at least one edge $e$ (regarded as an open edge).
   If $e$ is nonseparating, then $G=A*_{C^t=D}$ where $A$ is the graph of groups over $\Gamma-e$,
   and $C,D$ are the two images of $G_e$.
 Condition~\eqref{cond:cc} ensures that $\{C, D\}$ is almost malnormal in $A$,
 and by induction, the various nontrivial intersections $H\cap A^g$ are relatively quasiconvex in $A^g$,
and thus $H$ is relatively quasiconvex in $G$ by Theorem~\ref{last}.
A similar argument concludes the separating case.
\end{proof}
\begin{cor}
Let $G$ be f.g. and hyperbolic relative to $\mathbb P$. Suppose $G$ splits as a finite graph of groups. Assume:
\begin{itemize}
\item[(a)] Each $G_{\nu}$ is locally relatively quasiconvex;
\item[(b)] Each $G_e$ is Noetherian, total and relatively quasiconvex in $G$;
\item[(c)] Each $G_e$ is almost malnormal in $G$.
\end{itemize}
Then $G$ is locally relatively quasiconvex relative to $\mathbb P$.
\end{cor}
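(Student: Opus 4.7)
The plan is to reduce the statement to Theorem~\ref{thm:strongest result} by showing that every finitely generated subgroup $H\leq G$ is tamely generated. Observe that the hypotheses~(a), (b), (c) of Theorem~\ref{thm:strongest result} (totality, relative quasiconvexity, and almost malnormality of the $G_e$ in $G$) are exactly contained in hypotheses~(b) and (c) of the corollary, so it suffices to establish the tameness of an arbitrary finitely generated $H$.

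First, I would consider the Bass-Serre tree $T$ associated to the splitting, and let $T_H\subseteq T$ be the minimal $H$-invariant subtree. Since $H$ is finitely generated, $T_H$ is $H$-cocompact, so $H$ inherits the structure of a finite graph of groups $\Gamma_H$ whose vertex and edge groups are of the form $H_v = H\cap G_\nu^g$ and $H_e = H\cap G_e^g$. Because each $G_e$ is Noetherian by hypothesis~(b), every subgroup of $G_e^g$ is finitely generated, and in particular each edge group $H_e$ is finitely generated.

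Next, I would apply Lemma~\ref{fg} to $\Gamma_H$: since $H$ and each edge group of $\Gamma_H$ are finitely generated, every vertex group $H_v$ is finitely generated as well. Each $H_v$ sits inside a conjugate of a vertex group $G_\nu$, and by hypothesis~(a) this ambient vertex group is locally relatively quasiconvex with respect to its induced peripheral structure (which is well-defined because $G_\nu$ is itself relatively quasiconvex in $G$ by Lemma~\ref{QC vertex}). Consequently $H_v$ is relatively quasiconvex in its ambient vertex group, and this exhibits $H$ as tamely generated in the sense of the definition preceding Theorem~C.

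With tameness established, Theorem~\ref{thm:strongest result} immediately yields that $H$ is relatively quasiconvex in $G$, completing the proof. The only real subtlety to watch for, and the one place where I would be most careful, is matching the peripheral structure on $G_\nu$ used by ``locally relatively quasiconvex'' with the induced peripheral structure arising from the splitting; this is exactly the issue handled by Remark~\ref{fg for lqc} and Lemma~\ref{associative}, so no new argument is required.
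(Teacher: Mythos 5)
Your argument is correct and is exactly the intended one: the paper leaves this corollary unproved because it follows from Theorem~\ref{thm:strongest result} by the same reduction used in the proof of Theorem~\ref{intersect}(1) --- minimal $H$-subtree, Noetherian edge groups give f.g.\ edge groups, Lemma~\ref{fg} gives f.g.\ vertex groups, and local relative quasiconvexity of the $G_\nu$ (in their induced peripheral structures, via Lemma~\ref{QC vertex}, Remark~\ref{fg for lqc} and Lemma~\ref{associative}) makes $H$ tamely generated. No gaps; your handling of the peripheral-structure matching is the right point to flag and is resolved as you say.
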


\begin{thm}\label{thm:qc with parabolic vertices}
Let $G$ be hyperbolic relative to $\mathbb P$.
Suppose $G$ splits as a graph $\Gamma$ of groups with relatively quasiconvex edge groups.
Suppose $\Gamma$ is bipartite with $\Gamma^0=V\sqcup U$ and each edge joins vertices of $V$ and $U$.
Suppose each $G_v$ is maximal parabolic for $v\in V$, and for each $P\in \mathbb P$ there is at most one $v$
with $P$ conjugate to $G_v$.
Let $H\leq G$ be tamely generated. Then $H$ is quasiconvex relative to $\mathbb P$.
\end{thm}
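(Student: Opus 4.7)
The plan is to lift the given splitting of $G$ to one whose edge groups are parabolic in their vertex groups, invoke Theorem~\ref{qc first}, and then identify the resulting peripheral structure $\mathbb Q$ with $\mathbb P$. First, I would endow each vertex group with an appropriate peripheral structure: for $v\in V$, the maximal parabolic $G_v$ is viewed as hyperbolic relative to $\{G_v\}$, with a one-vertex fine hyperbolic graph; for $u\in U$, the group $G_u$ is relatively quasiconvex in $G$ by Lemma~\ref{QC vertex} (applied to the given splitting, using relative quasiconvexity of the edge groups and the standard finite-generation conventions), and is equipped with its induced peripheral structure $\mathbb P_u$ from Remark~\ref{fg for lqc}.

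Next, I would check that under these choices every edge group is parabolic in each of its vertex groups. At a $V$-endpoint this is automatic since $G_e\leq G_v$ and $\mathbb P_v=\{G_v\}$. At a $U$-endpoint $u$ of an edge $e$ with other endpoint $v$, the Bass-Serre identification $G_e=G_u\cap G_v$ together with $G_v$ being maximal parabolic in $G$ shows that $G_e$ is a member of $\mathbb P_u$ up to conjugacy. Thus the splitting of $G$ satisfies the hypothesis of Theorem~\ref{first}, and Theorem~\ref{qc first} then yields that the tamely generated subgroup $H$ is quasiconvex relative to the collection $\mathbb Q=\{Q_1,\dots,Q_j\}$ of parabolic-tree stabilizers.

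It remains to identify $\mathbb Q$ with $\mathbb P$ as subgroup collections of $G$ up to conjugacy. A parabolic tree meeting some $V$-vertex $(v,G_v)$ is the vertex $k_v$ together with its star of adjacent $(u,G_u\cap G_v)$ vertices, plus possible further extensions; any such extension reaching a new $V$-vertex $(v',G_{v'})$ forces $G_v\cap G_{v'}$ to be infinite, hence $G_v=G_{v'}$ by almost malnormality of $\mathbb P$. Combined with the uniqueness hypothesis on $v\mapsto[G_v]$ and self-normalization of maximal parabolics, this shows the stabilizer of the entire tree is exactly $G_v\in\mathbb P$. A parabolic tree disjoint from $V$-vertices is a singleton $(u,P_u)$ whose stabilizer is $P_u=G_u\cap P^g$ for some $P\in\mathbb P$ not conjugate to any $G_v$; since such $P$ must fix a $U$-vertex of $T$ (it cannot fix a $V$-vertex with infinite stabilizer by almost malnormality, and the bipartite/parabolic setup precludes genuine translation), we have $P^g\leq G_u$, whence $P_u=P^g$ is again conjugate to an element of $\mathbb P$. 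Hence $\mathbb Q$ and $\mathbb P$ coincide as subgroup collections, so quasiconvexity relative to $\mathbb Q$ coincides with quasiconvexity relative to $\mathbb P$.

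The main obstacle is the identification $\mathbb Q=\mathbb P$ in the last step. In particular, verifying that a parabolic tree rooted at a $V$-vertex retains stabilizer $G_v$ even when it wanders through multiple $V$-vertices whose stabilizers are $G$-conjugate requires the joint use of almost malnormality of $\mathbb P$, self-normalization of maximal parabolics, and the uniqueness hypothesis. A secondary technical point is confirming that every $P\in\mathbb P$ not conjugate to any $G_v$ actually fixes a $U$-vertex of $T$, which I would address using almost malnormality of $\mathbb P$ combined with the bipartite structure of $\Gamma$.
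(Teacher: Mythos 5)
Your overall route is the paper's: give each $G_v$ ($v\in V$) the peripheral structure $\{G_v\}$ with a one-vertex fine graph, give each $G_u$ ($u\in U$) the induced peripheral structure coming from its relative quasiconvexity (Lemma~\ref{QC vertex} together with Remark~\ref{fg for lqc}), observe that the edge groups are then parabolic in their vertex groups, and run the construction of Theorem~\ref{first} and Theorem~\ref{qc first}. Up to that point you and the paper agree, including the ingredients (almost malnormality of $\mathbb P$, self-normalization of maximal parabolics, and the ``at most one $v$'' hypothesis) that control the parabolic trees meeting $V$-type vertices; the paper compresses this into the remark that the parabolic trees are $i$-pods, with pod stabilizers the conjugates of the $G_v$'s.

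The genuine gap is your final identification $\mathbb Q=\mathbb P$. A singleton parabolic tree at a $U$-type vertex $\tilde u$ has stabilizer an element of the induced structure $\mathbb P_{\tilde u}$, i.e.\ a group of the form $G_{\tilde u}\cap P^{g}$ with $P\in\mathbb P$. Your claim that this equals $P^{g}$ is not justified: no totality of the vertex or edge groups is assumed, so nothing forces such a $P$ to be elliptic on the Bass--Serre tree (it may split along infinite intersections with edge groups and act on $T$ without a fixed point), and even when $P^{g}$ does fix a vertex of $T$ it need not fix the particular $\tilde u$ at hand; thus $G_{\tilde u}\cap P^{g}$ can be an infinite \emph{proper} subgroup of $P^{g}$, conjugate to no element of $\mathbb P$. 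Consequently $\mathbb Q$ is in general strictly finer than $\mathbb P$ and the last step of your argument fails as written. The conclusion is still reachable, but by a different closing move: every element of $\mathbb Q$ is conjugate \emph{into} an element of $\mathbb P$ (pod stabilizers are the $G_{\tilde v}$'s, singleton stabilizers lie inside conjugates of elements of $\mathbb P$), so after replacing elements of $\mathbb P$ by suitable conjugates, $\mathbb P$ is a peripheral structure extending $\mathbb Q$, and quasiconvexity of $H$ relative to $\mathbb Q$ (which your first two steps deliver via Theorem~\ref{qc first}) yields quasiconvexity relative to $\mathbb P$ by part~(1) of Theorem~\ref{Yang}; alternatively, argue as the paper does using only that the trees meeting $V$-vertices are $i$-pods. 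Either way, do not assert that $\mathbb Q$ and $\mathbb P$ coincide. (A smaller point, shared with the paper: your step that $G_e=G_u\cap G_v$ is, up to conjugacy in $G_u$, a member of $\mathbb P_u$ uses the standard fact about induced peripheral structures of relatively quasiconvex subgroups, or an enlargement of the chosen subgraph $K_u$; it deserves at least a citation.)
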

The scenario of Theorem~\ref{thm:qc with parabolic vertices} arises when $M$ is a compact aspherical 3-manifold,
from its JSJ decomposition. The manifold $M$ decomposes as a bipartite graph $\Gamma$ of spaces with $\Gamma^0=U\sqcup V$.
The submanifold $M_v$ is  hyperbolic for each $v\in V$, and $M_u$ is a graph manifold for each $u\in U$.
The edges of $\Gamma$ correspond to the ``transitional tori'' between these hyperbolic and complementary graph manifold parts.
Some of the graph manifolds are complex but others are simpler Seifert fibered spaces; in the simplest cases, thickened tori between adjacent hyperbolic parts or $I$-bundles over Klein bottles where a hyperbolic part terminates.
Hence $\pi_1M$ decomposes accordingly as a graph $\Gamma$ of groups,
and $\pi_1M$ is hyperbolic relative to $\{\pi_1M_u: u\in U\}$ by Theorem~\ref{first} or indeed, Corollary~\ref{cor:first}.

\begin{proof}
Let $K_o$ be a fine hyperbolic graph for $G$.
Each vertex group is quasiconvex in $G$ by Lemma~\ref{QC vertex}, and
so for each $u\in U$ let $K_u$ be a $G_u$-quasiconvex subgraph, and in this way we obtain finite hyperbolic $G_u$-graphs,
and for $v\in V$, we let $K_v$ be a singleton.
We apply the Construction in the proof of Theorem~\ref{first} to obtain a fine hyperbolic $G$-graph $K$ and quotient $\bar K$.
Note that the parabolic trees are $i$-pods. We form the $H$-cocompact quasiconvex subgraph $L$
by combining $H_\omega$-cocompact quasiconvex subgraphs $K_\omega$ as in the proof of Theorem~\ref{qc first}.
\end{proof}

\begin{thm}\label{thm:qc with half malnormal vertices}
Let $G$ be f.g. and hyperbolic relative to $\mathbb P$.
Suppose $G$ splits as graph $\Gamma$ of groups with relatively quasiconvex edge groups.
Suppose $\Gamma$ is bipartite with $\Gamma^0=V\sqcup U$ and each edge joins vertices of $V$ and $U$.
Suppose each $G_v$ is almost malnormal and total in $G$ for $v\in V$.
\begin{com} It follows that each $G_e$ is total in $G_u$.
However, unless we assume that $\{G_v: v\in V\}$ is almost malnormal, it does not follow that: $\{G_e: e \textrm{  is attached to } u\}$ is almost malnormal for $u\in U$.\end{com}
Let $H\leq G$ be tamely generated. Then $H$ is quasiconvex relative to $\mathbb P$.
\end{thm}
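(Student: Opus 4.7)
The plan is to enlarge $\mathbb{P}$ so that each $G_v$ with $v\in V$ becomes a maximal parabolic, apply Theorem~\ref{thm:qc with parabolic vertices} in the extended setting, and finally descend back to $\mathbb{P}$ via Theorem~\ref{Yang}.

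Let $\mathbb{P}^\circ\subseteq\mathbb{P}$ consist of those $P\in\mathbb{P}$ that have no conjugate contained in any $G_v$, and set $\mathbb{E}=\mathbb{P}^\circ\cup\{G_v:v\in V\}$, identifying any two $G_v,G_{v'}$ that are $G$-conjugate. Each $G_v$ is quasiconvex in $G$ relative to $\mathbb{P}$ by Lemma~\ref{QC vertex}. The hypotheses that each $G_v$ is almost malnormal and total, together with the almost malnormality of $\mathbb{P}$, yield almost malnormality of $\mathbb{E}$: indeed, for $P\in\mathbb{P}^\circ$ and $g\in G$, totality forces $P\cap G_v^g$ to be either $P$ or finite, and the first possibility is excluded by $P\in\mathbb{P}^\circ$. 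Hence Theorem~\ref{hyp extend} shows that $G$ is hyperbolic relative to $\mathbb{E}$. The edge groups remain quasiconvex (now relative to $\mathbb{E}$) by Theorem~\ref{Yang}(1), the bipartite splitting is unchanged with each $G_v$ a maximal parabolic of $(G,\mathbb{E})$, and tame generation is a property of the splitting so it persists. Theorem~\ref{thm:qc with parabolic vertices} therefore produces that $H$ is quasiconvex in $G$ relative to $\mathbb{E}$.

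To descend to $\mathbb{P}$ I would invoke Theorem~\ref{Yang}(2), checking that $H\cap E^g$ is quasiconvex relative to $\mathbb{P}$ for each $E\in\mathbb{E}$ and $g\in G$. For $E\in\mathbb{P}^\circ$, the intersection is a subgroup of a parabolic of $G$ and so is parabolic, hence relatively quasiconvex. For $E=G_v$ with $v\in V$, the intersection is the $H$-stabilizer of a vertex $\tilde v$ of the Bass--Serre tree lying above $v$, and tame generation (combined with $H$-conjugacy of stabilizers within an $H$-orbit) supplies that this stabilizer is relatively quasiconvex in $G_v^g$ with its induced peripheral structure. Since $G_v^g$ is itself quasiconvex in $G$ relative to $\mathbb{P}$, Lemma~\ref{associative} upgrades this to quasiconvexity of $H\cap G_v^g$ relative to $\mathbb{P}$. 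The main obstacle is this descent step, specifically verifying that tame generation accounts for \emph{every} intersection $H\cap G_v^g$; this succeeds because $T_H$ has only finitely many $H$-orbits of vertices with nontrivial stabilizer coming from $\Gamma'_H$, and relative quasiconvexity is invariant under $H$-conjugation.
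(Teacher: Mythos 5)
There is a genuine gap, and it sits exactly at the step you flag least: the claimed almost malnormality of $\mathbb{E}=\mathbb{P}^\circ\cup\{G_v:v\in V\}$. You only verify the intersections $P^h\cap G_v^g$ with $P\in\mathbb{P}^\circ$ (which totality does handle), but almost malnormality of the \emph{collection} also requires that $G_v^g\cap G_{v'}^h$ be finite for distinct $v,v'\in V$ (and for distinct conjugates of the same $G_v$, which individual almost malnormality does give). The hypothesis of the theorem is only that \emph{each} $G_v$ is almost malnormal in $G$, not that the family $\{G_v: v\in V\}$ is an almost malnormal collection, and the latter genuinely fails in general: if $v$ and $v'$ are both adjacent to $u\in U$ with infinite edge groups $G_e,G_{e'}$ sharing an infinite subgroup $C\leq G_u$ (e.g.\ $G_e=G_{e'}=C$ along a path $v-u-v'$), then $G_v\cap G_{v'}\supseteq C$ is infinite, while each $G_v$, $G_{v'}$ can separately be almost malnormal. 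Edge groups here are only assumed relatively quasiconvex, so nothing forbids this. Consequently Theorem~\ref{hyp extend} cannot be invoked, $G$ need not be hyperbolic relative to your $\mathbb{E}$, and both the appeal to Theorem~\ref{thm:qc with parabolic vertices} and the Yang descent lose their footing. This is precisely why the paper does \emph{not} extend by all the $G_v$ at once: it first replaces $\mathbb{P}$ by an elliptic refinement $\mathbb{P}'$ (Remark~\ref{parab spli}) and then inducts on the edges of $\Gamma$, peeling off a single edge $e$, collapsing the rest to $G_1$ (separating case) or keeping $G=G_1*_{C^t=D}$ (nonseparating case), and extending the peripheral structure by just \emph{one} group ($G_1$, respectively the single $G_v$), so that only the almost malnormality of that one subgroup is ever needed before applying Theorem~\ref{hyp extend}, Theorem~\ref{thm:qc with parabolic vertices}, and the Yang descent at each stage.

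A secondary weak point is your descent Case $E=G_v$: Theorem~\ref{Yang}(2) requires \emph{every} intersection $H\cap G_v^g$ to be quasiconvex relative to $\mathbb{P}$, whereas tame generation only guarantees relative quasiconvexity of the vertex groups of the chosen subgraph $\Gamma'_H$; stabilizers of vertices outside (the $H$-orbit of) $\Gamma'_H$ are not covered by the definition, and your closing sentence does not address them. The paper sidesteps this by identifying $H\cap G_1^g$ with the stabilizer $H_{gT_1}$ of a subtree, which corresponds to a component of $\Gamma_H$ with the edge removed and is then handled by the induction on edges. (The further point that $\mathbb{E}$ only extends $\mathbb{P}$ after replacing some $P$ by conjugates is minor and fixable, as in the proof of Theorem~\ref{QC combination}.)
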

Theorem~\ref{thm:qc with half malnormal vertices} covers the case
where edge groups are almost malnormal on both sides \begin{com} treated somewhere already?\end{com}
since we can subdivide to put barycenters of edges in $V$.

Another special case where Theorem~\ref{thm:qc with half malnormal vertices} applies
is where $G=G_1*_{{C_1}'=C_2} G_2$ is hyperbolic relative to $\mathbb P$,
and  $C_2 \leq G_2$ is total and relatively quasiconvex in $G$ and almost malnormal in $G_2$.

\begin{proof}
Following the Technical Point in the proof of Theorem~\ref{last}, by splitting certain elements of $\mathbb P$ to obtain $\mathbb P'$ as in Remark~\ref{parab spli}, we can assume that $G$ is hyperbolic relative to $\mathbb P'$ where each $P'\in \mathbb P'$
is elliptic with respect to the action of $G$ on the Bass-Serre tree $T$.
 Since $\mathbb P$ extends $\mathbb P'$ and each $G_v\cap P^g$ is conjugate to an element of $\mathbb P'$,
 we see that  each $G_v$ is quasiconvex in $G$ relative to $\mathbb P'$
by  Theorem~\ref{Yang},
and moreover, since elements of $\mathbb P'$ are vertex groups of elements of $\mathbb P$,
 each $G_v$ is total relative to $\mathbb P'$.\begin{com} The $\mathbb P'$ are the vertex group parts of the $\mathbb P$...\end{com}
Therefore each $G_v$ is hyperbolic relative to a collection $\mathbb P'_v$ of conjugates of elements of $\mathbb P'$.

We argue by induction on the number of edges of $\Gamma$. If $\Gamma$ has no edge the result is contained in the hypothesis. Suppose $\Gamma$ has at least one edge $e$.
If $e$ is separating and $\Gamma=\Gamma_1\sqcup e \sqcup \Gamma_2$ where $e$ attaches $v\in \Gamma^0_1$ to $u\in \Gamma^0_2$ then $G=G_1*_{G_e}G_2$ where $G_i=\pi_1(\Gamma_i)$. Each $G_e$ is the intersection of vertex groups and hence quasiconvex relative to $\mathbb P'$. By Lemma~\ref{QC vertex}, the groups $G_1$ and $G_2$ are quasiconvex in $G$ relative to $\mathbb P'$. Thus $G_i$ is hyperbolic relative to $\mathbb P'_i$ by Remark~\ref{fg for lqc}.

Observe that $T$ contains subtrees $T_1$ and $T_2$ that are the Bass-Serre trees of $\Gamma_1$ and $\Gamma_2$,
and $T-G\tilde e = \{gT_1 \cup gT_2 \ : \ g\in G\}$.
The Bass-Serre tree $\bar T$ of $G_1 *_{G_e} G_2$ is the quotient of $T$ obtained by identifying each $gT_i$ to a vertex.

Since $H$ is relatively finitely generated, there is a finite graph of groups $\Gamma_H$ for $H$,
and a map $\Gamma_H\rightarrow \Gamma$. Removing the edges mapping to $e$ from $\Gamma_H$,
we obtain a collection of finitely many graphs of groups - some over $\Gamma_1$ and some over $\Gamma_2$.
Each component of $\Gamma_H$ corresponds to the stabilizer of some $gT_i$ and is denoted by $H_{gT_i}$,
and since that component is a finite graph with relatively quasiconvex vertex stabilizers,
we see that each $H_{gT_i}$ is relatively quasiconvex in $G_i$ relative to $\mathbb P'_i$ by induction on the number of edges of $\Gamma_H$.

We extend the peripheral structure $\mathbb P'_1$ of $G_1$ to $\mathbb E_1=\{G_1\}$.
Note that now each $H_{gT_1}$ is quasiconvex in $G_1$ relative to $\mathbb E_1$ by Theorem~\ref{Yang}.
 Let $$\mathbb E=\mathbb E_1\cup \mathbb P'_2-\{P\in \mathbb P'_2 \colon P^g\leq {G_e},~\text{for some}~ g\in G_2\}.$$ Observe that $\mathbb E$ extends $\mathbb P'$. Since $G_v$ is total and quasiconvex in $G$ relative to $\mathbb P'$ and $\mathbb E$ extends $\mathbb P'$, the group $G_1$ is total and quasiconvex in $G$ relative to $\mathbb E$ by Theorem~\ref{Yang}. Therefore $G$ is hyperbolic relative to $\mathbb E$ by Theorem~\ref{hyp extend}.

Since $G_1$ is maximal parabolic in $G$, by Theorem~\ref{thm:qc with parabolic vertices} $H$ is quasiconvex in $G$ relative to $\mathbb E$. The graph $\Gamma_H$ shows that $H$
is generated by finitely many hyperbolic elements and  vertex stabilizers $H_{g\bar T_i}$
and each $H_{g\bar T_i} = H_{gT_i}$ which we explained above is relatively quasiconvex in $G_i$.

We now show that $H$ is quasiconvex relative to $\mathbb P'$ and therefore relative to $\mathbb P$ by Theorem~\ref{Yang}. Since $\mathbb E$ extends $\mathbb P'$, by Theorem~\ref{Yang}, it suffices to show that $H\cap E^g$ is quasiconvex relative to $\mathbb P'$ for all $E\in \mathbb E$ and $g\in G$. There are two cases:

Case 1: $E\in \mathbb P'_2$. Now $H\cap E^g$ is a parabolic subgroup of $G$ relative to $\mathbb P'$ and is thus quasiconvex relative to $\mathbb P'$.

Case 2: $E=G_1$. Then $H\cap E^g$ is quasiconvex relative to $\mathbb P'_1$ since $(H\cap E^g)= H_{gT_1}$ is quasiconvex in $G_1^g$ relative to $\mathbb E_1^g=\{G_1^g\}$. Since $E^g=G_1^g$ is quasiconvex relative to $\mathbb P'$, Lemma~\ref{associative} implies that $H\cap E^g$ is quasiconvex relative to $\mathbb P'$.

Now assume that $e$ is nonseparating. Let $u\in U$ and $v\in V$ be the endpoints of $e$.
 Then $G=G_1*_{C^t=D}$ where $G_1$ is the graph of groups over $\Gamma-e$, and $C$ and $D$ are the images of $G_e$ in $G_v$ and $G_u$ respectively. We first reduce the peripheral structure of $G$ from $\mathbb P$ to $\mathbb P'$,
 and we then extend from $\mathbb P'$ to $\mathbb E$ with:
 $$\mathbb E= \{G_v\} \cup \mathbb P'- \{P\in \mathbb P' \colon P^g\leq {G_v},~\text{for some}~ g\in G\}.$$
  $G$ is hyperbolic relative to $\mathbb E$  by Theorem~\ref{hyp extend} as $G_v$ is almost malnormal, total, and quasiconvex
  relative to $\mathbb P$ . The argument follows by induction and Theorem~\ref{thm:qc with parabolic vertices}
  as in the separating case.
  \end{proof}

Theorem~\ref{thm:strongest result} suggests the following
 criterion for relative quasiconvexity:
\begin{conj}\label{conj:generalizing main result}
Let $G$ be hyperbolic relative to $\mathbb P$. Suppose $G$ splits as a finite graph of groups with f.g. relatively quasiconvex edge groups.
 Suppose $H \leq G$ is tamely generated such that each $H_v$ is f.g. for each $v$ in the Bass-Serre tree. Then $H$ is relatively quasiconvex in $G$.
\end{conj}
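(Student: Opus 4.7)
The natural plan is to proceed by strong induction on the number of edges of $\Gamma$, paralleling the proof of Theorem~\ref{thm:strongest result}. The base case, where $\Gamma$ is a single vertex, is immediate from the tame generation hypothesis: $H=H_v$ is quasiconvex in $G_v=G$ by definition. In the inductive step, one would pick an edge $e$ of $\Gamma$, write $G=A*_{G_e^t=G_{e'}}$ or $G=A*_{G_e}B$ depending on whether $e$ is non-separating or separating, and try to combine the inductive conclusions for the smaller graphs. The obstruction is that the single-edge combination result Theorem~\ref{last} invokes almost malnormality of edge groups, which we have dropped.

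A more promising approach is to imitate the proof of Theorem~\ref{qc first} directly at the level of a fine hyperbolic graph. Following the Technical Point from the proof of Theorem~\ref{last}, one first splits parabolics of $G$ along finite edge groups of their induced splittings, replacing $\mathbb P$ by a refined $\mathbb P'$ each of whose members is elliptic on the Bass-Serre tree $T$; by Theorem~\ref{Yang} it suffices to prove relative quasiconvexity with respect to $\mathbb P'$. Pick a fine hyperbolic $(G,\mathbb P')$-graph $K$. By Lemma~\ref{QC vertex} each $G_v$ is quasiconvex in $G$, so one can choose $G_v$-cocompact quasiconvex subgraphs $K_v\subset K$, and by iterated $G$-attachment one arranges that for each edge $e$ some $G_e$-cocompact quasiconvex ``bridge'' subgraph $K_e$ lies in $K_{\iota(e)}\cap K_{\tau(e)}$. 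By tame generation and Lemma~\ref{associative} each $H_\nu$ ($\nu\in\Gamma'_H$) is relatively quasiconvex in $G$ and in $G_v$; choose $H_\nu$-cocompact quasiconvex $L_\nu\subset K_v$ containing anchor vertices for edges of $\Gamma'_H$ incident to $\nu$. Set $L=\bigcup_\nu H\cdot L_\nu$ together with the finitely many $H$-orbits of bridge paths joining them along edges of $\Gamma'_H$. Connectivity and $H$-cocompactness of $L$ then follow from finiteness of $\Gamma'_H$ and the finite generation of each $H_\nu$.

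The hard part is proving that $L$ is quasiconvex in $K$. In Theorem~\ref{qc first} this step was essentially free, because the fine graph produced by the construction of Theorem~\ref{first} was itself a tree-of-spaces whose vertex spaces were the chosen $K_\nu$. Under the present hypotheses $K$ carries no such tree structure: distinct $H$-translates of the $L_\nu$ can be linked in $K$ by many paths that do not correspond to edges of $\Gamma'_H$, since the edge groups $G_e$ are not almost malnormal. Controlling these extraneous paths seems to require either a combination theorem for relatively quasiconvex subgroups meeting along a common quasiconvex (but not necessarily malnormal) subgroup, extending \cite{MartinezPedrozaCombinations2009}, or a direct cusped-space argument \`a la Hruska~\cite{HruskaRelQC} showing that the thickening of $L$ in the cusped space of $G$ is quasi-isometrically embedded.

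The main obstacle is precisely this quasiconvexity step. It is the same obstruction that prevents the naive combination theorem for quasiconvex subgroups from holding in the purely hyperbolic setting, so any successful proof must use the tame-generation hypothesis in an essential way, presumably to bound the geometry of how $H$-translates of the $L_\nu$ can interact in $K$. If this quasiconvexity of $L$ in $K$ is established, then Theorem~\ref{Yang} and the peripheral reduction in the first paragraph deliver quasiconvexity of $H$ relative to the original $\mathbb P$, completing the proof.
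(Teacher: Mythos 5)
You have not proved this statement, and neither does the paper: it is stated as Conjecture~\ref{conj:generalizing main result}, i.e.\ as an open problem. The only evidence the paper offers is the remark immediately following it: when the edge groups are separable (and total), one passes to a finite-index subgroup $G'$ whose induced splitting has almost malnormal edge groups, and then Theorem~\ref{thm:strongest result} applies --- for instance when $G$ is virtually special and hyperbolic relative to virtually abelian subgroups with total edge groups. So there is no proof in the paper for you to be measured against, and your proposal, as you yourself acknowledge, does not supply one either.

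Concretely, the gap in your sketch is exactly the step you flag: quasiconvexity of $L=\bigcup_\nu H L_\nu$ (plus bridges) in $K$. Every proved quasiconvexity result in the paper (Theorem~\ref{qc first}, Theorem~\ref{last}, Theorem~\ref{thm:strongest result}, Theorems~\ref{thm:qc with parabolic vertices} and~\ref{thm:qc with half malnormal vertices}) gets this step either from the tree-of-spaces structure of the graph built in Theorem~\ref{first} (available only because edge groups are parabolic) or from almost malnormality of edge groups via Yang's extension machinery (Theorems~\ref{hyp extend} and~\ref{Yang}); your hypotheses drop both, so distinct $H$-translates of the pieces $L_\nu$ can interact along paths in $K$ invisible to $\Gamma'_H$, and no lemma in the paper (nor in \cite{MartinezPedrozaCombinations2009} or \cite{HruskaRelQC} as cited) controls them. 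Tame generation alone cannot be expected to rescue this: the analogous statement already fails for f.g.\ subgroups of amalgams of hyperbolic groups over non-malnormal quasiconvex edge groups, which is why the paper records the statement as a conjecture rather than a theorem. What you have written is therefore a reasonable research plan that correctly locates the obstruction, together with a correct reduction of the peripheral structure via Remark~\ref{parab spli} and Theorem~\ref{Yang}, but it is not a proof; if you want a theorem, restrict to the separable-and-total setting (the paper's route, which is genuinely different from your direct fine-graph assembly in that it changes the group by passing to a finite-index subgroup rather than changing the graph) or reinstate almost malnormality as in Theorem~\ref{thm:strongest result}.
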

When the edge groups are separable in $G$, there is
a finite index subgroup $G'$ whose splitting has relatively malnormal edge groups
(see e.g. \cite{HruskaWisePacking,HaglundWiseAmalgams}).
Consequently, if moreover, the edge groups of $G$ are total,
then the induced splitting of $G'$ satisfies the criterion of Theorem~\ref{thm:strongest result},
and we see that Conjecture~\ref{conj:generalizing main result} holds in this case.
In particular, Conjecture~\ref{conj:generalizing main result}  holds when $G$ is virtually special and hyperbolic relative to virtually abelian subgroups, provided that edge groups are also total. We suspect the totalness assumption can be dropped totally.

As a closing thought, consider a hyperbolic $3$-manifold $M$  virtually having a malnormal quasiconvex hierarchy (conjecturally all closed $M$). Theorem~\ref{thm:strongest result} suggests  an alternate approach to the tameness theorem,
which could be reproven by verifying:

\emph{If the intersection of a f.g. $H$ with a malnormal quasiconvex edge group is infinitely generated then $H$ is a virtual fiber.}

{\bf Acknowledgement:} We are extremely grateful to the anonymous f.g. referee whose very helpful corrections and adjustments improved the results and exposition of this paper.

\bibliographystyle{abbrv}

\bibliography{wise.bib}
\end{document}